\newcommand{\cl}{\overline}
\newcommand{\set}[2]{\mbox{$\left\{#1:\;#2\right\}$}}
\def\R{\mathbb{R}}
\newtheorem{thm}{Theorem}[section]
\newtheorem{cor}[thm]{Corollary}
\newtheorem{lem}[thm]{Lemma}
\newtheorem{prop}[thm]{Proposition}
\theoremstyle{definition}
\newtheorem{defin}[thm]{Defnition}
\newtheorem{rem}[thm]{Remark}
\newtheorem{exa}[thm]{Example}
\def\<#1>{\left\langle #1\right\rangle}
\numberwithin{equation}{section}
\newcommand{\E}{{\mathcal E}}
\newcommand{\tm}{{t_{\text{max}}}}
\newcommand{\fl}[1]{\left\lfloor #1\right\rfloor}
\DeclareMathOperator*{\einf}{ess\,inf}
\begin{document}
\title[Positive solutions of parabolic systems]{Multiple positive solutions of parabolic systems with nonlinear, nonlocal initial conditions}

\author[G. Infante]{Gennaro Infante}
\address{Gennaro Infante, Dipartimento di Matematica ed Informatica,
Universit\`{a} della Calabria, 87036 Arcavacata di Rende, Cosenza, Italy}
\email{gennaro.infante@unical.it}
\author[M. Maciejewski]{Mateusz Maciejewski}
\address{Mateusz Maciejewski, Nicolaus Copernicus University, Faculty of
Mathematics and Computer Science, ul. Chopina 12/18, 87-100 Toru\'n, Poland}
\email{Mateusz.Maciejewski@mat.umk.pl}
\subjclass[2010]{Primary 35K51, secondary 35B09, 35B45, 35D05, 47H10}
\keywords{Fixed point index, parabolic system, nonlocal initial condition, positive mild solution, cone, weak Harnack inequality, multiplicity, nonexistence}

\begin{abstract}
In this paper we study the existence, localization and multiplicity of positive solutions for parabolic systems with nonlocal initial conditions.
In order to do this, we extend an abstract theory that was recently developed by the authors jointly with Radu Precup, related to the existence of fixed points of nonlinear operators satisfying some upper and lower bounds. Our main tool is the Granas fixed point index theory. We also provide a non-existence result and some examples to illustrate our theory.
\end{abstract}
\maketitle

\section{Introduction}\label{sect:intro}
In this paper we deal with the existence, non-existence and localization of positive solutions of the following system of parabolic equations subject to nonlinear, nonlocal initial conditions
\begin{equation}\label{eq:parabolic-intro}
\begin{cases}
u_t-\Delta u = f(t,x,u,v), & (t,x)\in (0,\tm)\times\Omega,\\
v_t-\Delta v = g(t,x,u,v), & (t,x)\in (0,\tm)\times\Omega,\\
u(t,x)=v(t,x)=0, & (t,x)\in(0,\tm)\times\partial\Omega,\\
u(0,\cdot)=\alpha(u,v),\\
v(0,\cdot)=\beta(u,v),
\end{cases}
\end{equation}
where $\Omega \subset \mathbb{R}^{m}$ is a bounded Dirichlet regular domain (i.e. for all $\varphi\in C(\partial \Omega)$ there exists $u\in C(\cl\Omega)$ such that $u|\partial\Omega=\varphi$ and $\Delta u=0$ in a distributional sense, see \cite[Definition 6.1.1]{b:ABHN}), $f,g\colon (0,\tm)\times\Omega\times\R_+\times\R_+\to\R_+$ are continuous functions and
\begin{equation}\label{init-con}
\alpha(u,v)=G_1\left(\int_0^{\tm} g_1(u(t),v(t))d\mu_1(t)\right),\quad \beta(u,v)=G_2\left(\int_0^{\tm} g_2(u(t),v(t))d\mu_2(t)\right),
\end{equation}
where $g_1,g_2\colon\R_+^2\to\R_+$, $G_1,G_2\colon\R_+\to\R_+$ are continuous with $g_i(0,0)=G_i(0)=0$ for $i\in\{1,2\}$ and $\mu_1,\mu_2$ are finite (positive) Borel measures on $[0,\tm]$ such that $$\mu_1(\{0\})=\mu_2(\{0\})=0.$$ Note that the initial conditions \eqref{init-con} cover a variety of cases, a \emph{particular} example being
\begin{equation}\label{eq:ntime-intro}
\alpha(u,v)(x)=\int_0^{\tm} u(t,x)dt,\quad \beta(u,v)(x)=\int_0^{\tm} v(t,x)dt.
\end{equation}
Note that the system \eqref{eq:parabolic-intro} can be applied to describe physical phenomena in which it is possible to measure the sums of amounts of substances according to formulae of the type~\eqref {eq:ntime-intro} or
\begin{equation}\label{eq:alpha-multipoint}
\alpha(u,v)=\sum_{s=1}^{k} \alpha_su(t_s),\quad \beta(u,v)=\sum_{s=1}^{r} \beta_sv(t'_s),
\end{equation}
where $0<t_1<\ldots<t_k$, $0<t'_1<\ldots<t'_r$ and $\alpha_s,\beta_s>0$. For example, the system~\eqref{eq:parabolic-intro} with the nonlocal initial conditions \eqref{eq:alpha-multipoint} can be used to model a reaction-diffusion process of a little amount of gases in a transparent tube, in a more appropriate way than with the usual initial-value condition; for more insight on the physical interpretation of this system see the paper by Byszewski~\cite{Bys1}. We mention that a physical motivation for the integral form of the initial condition is presented in \cite{Bys2,Bys3,b:olm-rob}. Moreover, the system \eqref{eq:parabolic-intro} can be used to describe the periodic solutions in the case $\alpha(u,v)=u(T)$, $\beta(u,v)=v(T)$, where $T>0$ and $f$ and $g$ are $T$-periodic.
Furthermore, a number of applications of nonlocal problems for evolution equations are illustrated in Section 10.2 of \cite{b:McKib}.

Initial nonlocal conditions have been investigated in a variety of settings, for example in the case of multi-point \cite{b:Chab},
integral \cite{b:tc-rp-pr,b:Deng,b:olm-rob,b:pao,b:ras-kar1,b:ras-kar2} and nonlinear conditions \cite{b:b-t-v,b:Bou1,b:Bou2,b:Bou3}, see also the recent review \cite{b:Stik}.

In a recent paper \cite{b:gi-mm-rp} the authors investigated the existence, localization and multiplicity of positive solutions of systems of $(p,q)$-Laplacian equations subject to Dirichlet boundary conditions. The main tool in  \cite{b:gi-mm-rp} is the development of a
general abstract framework for the existence of fixed points of nonlinear operators acting on cones that satisfy an inequality of Harnack-type. Within this setting, the authors of \cite{b:gi-mm-rp} used the Granas fixed point index (see for example \cite{b:Deim,b:gd}) and, in order to compute the index, they used some estimates from above, using the norm, and from below, utilizing a seminorm. Here we \emph{extend} the theoretical results of  \cite{b:gi-mm-rp} to a more general setting; this generalization is \emph{apparently}
 simple but fruitful and  is motivated by the application to the parabolic system \eqref{eq:parabolic-intro}.
In particular, we replace the use of the seminorm with the use of a more general positively homogeneous functional and, moreover, we relax the assumptions on the cone. The Remarks \ref{comp}, \ref{rem:choiceC0}, 
and \ref{rem:WhySuchHarnack} illustrate in details the differences between the two theoretical approaches and their applicability.
We point out that our new approach is quite general and covers, \emph{as a special case}, the system \eqref{eq:parabolic-intro}.

The problem of \emph{one} parabolic equation with nonlocal, \emph{linear} initial condition was stated and discussed in \cite{b:rp-par}. By modifying the theoretical setting as described above,
we overcome the difficulties arisen in \cite{b:rp-par}, obtaining the results predicted by~\cite{b:rp-par}.

In contrast with the paper \cite{b:gi-mm-rp}, where the space $L^\infty$ with an integral seminorm was used, here, in order to seek mild solutions of our problem, we use the classical space of continuous functions, with a very natural positively homogeneous functional, namely the minimum on a suitable subset. A similar idea has been used with success in the context of ordinary differential equations and integral equations, see for example~\cite{b:Deim,b:guolak,b:legg-will}. In our case, a key role for our multiplicity results is played by a weak Harnack-type inequality, see Remark~\ref{rem:Harnack}.

In the case of the system~\eqref{eq:parabolic-intro} we obtain existence, localization, multiplicity and non-existence of positive mild solutions.

We illustrate in two examples the applicability of our results and we show that the constants that occur in our theory can be computed.

\section{Abstract existence theorems}\label{sect:abstr}
In this Section we generalize the abstract results of Section 2 of \cite{b:gi-mm-rp}. Although these results are motivated by the solvability of the parabolic system~\eqref{eq:parabolic-intro}, we present them in a greater generality, as we believe that they are of independent interest, since they can be applied in other contexts, whenever an abstract Harnack-type inequality is available.

For $i=1,2$, let $(E_{i},|\cdot|_i)$ be a Banach space and let $\fl{\cdot}_i$ be a given positively homogeneous continuous functional on $E_i$. In what follows, we omit the subscript in $\fl\cdot_i$, when confusion is unlikely.

Let also $G_i\subset E_i$ be closed convex wedges, which is understood to mean that
\[\lambda u+\mu v\in G_i\text{ for all }u,v\in G_i\text{ and }\lambda,\mu\geq 0.\] Moreover, let $K_i\subset G_i$ be closed convex cones, which means that $K_i$ are closed convex wedges such that $K_i\cap(-K_i)=\{0\}$. The wedges induce the natural semiorders $\preceq$ on $E_i$ in the following way: $$u\preceq v\ \text{if and only if}\ v-u\in G_i,\, u,v\in E_i.$$ By a semiorder we mean that the relation $\preceq$ is reflexive and transitive, but not necessarily antisymmetric.

We assume that the functionals $\fl{\cdot}_i$ are monotone on $K_i$ with respect to the semiorder $\preceq$, that is for $i\in\{1,2\}$
we have
\begin{equation}\label{eq:fl monot}
\text{if }u\preceq v\text{ then }\fl u \leq\fl v\text{ for }u,v\in K_i.
\end{equation}
In particular we have $\fl u\geq 0$ for $u\in K_i$.

We assume that there exist some elements $\psi_i\in K_i$ such that $|\psi_i|=1$ and for $i\in\{1,2\}$
\begin{equation}\label{eq: prec-max} u\preceq |u|\psi_i\text{ for all }u\in K_i.\end{equation}

Note that \eqref{eq:fl monot} and  \eqref{eq: prec-max} yield
\begin{equation}\label{eq:rel-fl-norm}
\fl u_i\leq \fl{\psi} |u|\text{ for all } u\in K_i.\end{equation}
In particular, we have $\fl{\psi_i}>0$ if $\fl\cdot$ is nonzero.

In what follows by the \emph{compactness} of a continuous operator we mean the
relative compactness of its range. By the \emph{complete continuity} of a
continuous operator we mean the relative compactness of the image of every
bounded set of the domain.

We seek the fixed points of a completely continuous operator
\[N:=(N_1,N_2)\colon K_1\times K_2\to K_1\times K_2,\]
that is $(u,v)\in K_1\times K_2$ such that $N(u,v) =(u,v)$.

We shall discuss not only the existence, but also the localization and multiplicity
of the solutions of the nonlinear equation $N(u,v) =(u,v)$. In order to do this, 
we utilize the Granas fixed point index, $\mathrm{ind}_C(f,U)$, which
roughly speaking, is the algebraic count of the fixed points of $f$
in the set $U$. The formal definition of the index 
involves the \emph{Leray-Schauder degree} and retractions, for more information on the index and its applications
we refer the reader to~\cite{b:Deim,b:gd}.

The next Proposition describes some of the useful properties of the index,
for details see Theorem 6.2, Chapter 12 of \cite{b:gd}.

\begin{prop}
\label{fpi-pro} Let $C$ be a closed convex subset of a Banach space, $%
U\subset C$ be open in $C$ and $f\colon \overline{U}\rightarrow C$ be a
compact map with no fixed points on the boundary $\partial U$ of $U.$ Then
the fixed point index has the following properties:

\emph{(i)} (Existence) If $\mathrm{ind}_{C}(f,U)\neq 0$ then $\mathrm{fix}%
(f)\neq \emptyset $, where $\mathrm{fix}f=%
\mbox{$\left\{x\in\bar
U:\;f(x)=x\right\}$}$.

\emph{(ii)} (Additivity) If $\mathrm{fix}f\subset U_1 \cup U_2\subset U$
with $U_1, U_2$ open in $C$ and disjoint, then
\begin{equation*}
\mathrm{ind}_{C}(f,U)=\mathrm{ind}_{C}(f,U_1 )+\mathrm{ind}_{C}(f,U_2).
\end{equation*}

\emph{(iii)} (Homotopy invariance) If $h\colon \overline{U}\times \lbrack
0,1]\rightarrow C$ is a compact homotopy such that $h(u,t)\neq u$ for $u\in
\partial U$ and $t\in \lbrack 0,1]$ then
\begin{equation*}
\mathrm{ind}_{C}(h(\cdot,0),U)=\mathrm{ind}_{C}(h(\cdot,1),U).
\end{equation*}

\emph{(iv)} (Normalization) If $f$ is a constant map, with $f(u)=u_{0}$ for
every $u\in \overline{U},$ then
\begin{equation*}
\mathrm{ind}_{C}(f,U)=%
\begin{cases}
1,\quad \text{if}\ u_{0}\in U \\
0,\quad \text{if}\ u_{0}\notin \overline{U}.%
\end{cases}%
\end{equation*}

In particular, $\mathrm{ind}_{C}(f,C)=1$ for every compact function $%
f:C\rightarrow C,$ since $f$ is homotopic to any $u_{0}\in C,\ $by the
convexity of $C$ (take $h\left( u,t\right) =tf\left( u\right) +\left(
1-t\right) u_{0}$).
\end{prop}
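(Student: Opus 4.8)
The plan is to treat Proposition \ref{fpi-pro} not as a self-contained result but as the specialization to convex sets of the fixed point index for compact maps, so the strategy is to build that index out of the finite-dimensional Brouwer theory and then check that properties (i)--(iv) descend from it. First I would fix the foundation: on a finite-dimensional convex set the fixed point index coincides with the Brouwer degree of $\mathrm{id}-f$, for which the existence, additivity, homotopy and normalization properties are classical. The boundary hypothesis $f(x)\neq x$ on $\partial U$ is what makes everything run: since $\overline{U}$ is closed, $f$ is compact, and $f$ has no fixed point on $\partial U$, the set $\mathrm{fix}\,f$ is a compact subset of $U$ lying at a positive distance $\delta$ from $\partial U$.

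Next I would pass from finite dimensions to the compact map $f\colon\overline{U}\to C$ by means of a Schauder projection. Because $f(\overline{U})$ is relatively compact, for each $\varepsilon>0$ there is a finite-dimensional subspace $F$ and a continuous projection $P_\varepsilon$ onto $C\cap F$ with $|P_\varepsilon y-y|<\varepsilon$ on the range of $f$; here the convexity of $C$ is essential, since it guarantees that $C\cap F$ is again convex and that $P_\varepsilon$ takes values in $C$. Setting $f_\varepsilon=P_\varepsilon\circ f$ and choosing $\varepsilon<\delta$, the approximant has no fixed point on $\partial(U\cap F)$, so its finite-dimensional index is defined, and I would \emph{define} $\mathrm{ind}_C(f,U)$ to be this number. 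The crux of the construction is well-definedness: one must show the value is independent of $\varepsilon$ and of $F$. This I would obtain from the straight-line homotopy $tf_{\varepsilon'}+(1-t)f_\varepsilon$, which stays fixed-point-free on $\partial U$ for $\varepsilon,\varepsilon'<\delta$, together with the stability of the finite-dimensional index under enlarging the ambient subspace, which lets one compare approximations living in different $F$.

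With a well-defined index in hand, each assertion reduces to its finite-dimensional counterpart. Existence (i) is immediate: a nonzero index forces $f_\varepsilon$ to have a fixed point for every small $\varepsilon$, and a limit of such points, extracted using the compactness of $f$, is a fixed point of $f$. Additivity (ii) follows by choosing a single $F$ and an $\varepsilon$ small enough that finite-dimensional additivity applies on $U_1\cap F$ and $U_2\cap F$. Homotopy invariance (iii) is the most delicate point and the one I expect to be the main obstacle: one needs the compact homotopy $h$ to be fixed-point-free on $\partial U$ \emph{uniformly} in $t$, which by compactness of $h(\overline{U}\times[0,1])$ yields a uniform $\delta>0$ and hence a single Schauder approximation $h_\varepsilon$ valid for all $t$, thereby reducing to finite-dimensional homotopy invariance. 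Normalization (iv) is the Brouwer normalization transported through the approximation, and the concluding remark $\mathrm{ind}_C(f,C)=1$ is then just (iii) applied to $h(u,t)=tf(u)+(1-t)u_0$, legitimate precisely because $C$ is convex, so that $h$ maps into $C$.
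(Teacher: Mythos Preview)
Your outline is a faithful sketch of the standard construction of the Granas fixed point index via Schauder projections and reduction to Brouwer degree, and the argument as you describe it is essentially correct. However, the paper does not prove Proposition~\ref{fpi-pro} at all: it is stated as a quotation of known material, with the sentence ``for details see Theorem~6.2, Chapter~12 of~\cite{b:gd}'' serving in lieu of proof. So there is no ``paper's own proof'' to compare against; you have supplied an actual argument where the authors simply invoke a reference.

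That said, your sketch is the right one and matches the construction in Granas--Dugundji. The only point where I would ask you to be slightly more careful is the well-definedness step: when comparing approximations living in different finite-dimensional subspaces $F$ and $F'$, one passes to $F+F'$ and uses that the Brouwer index is stable under embedding into a larger space, which you mention but do not spell out. Everything else (existence via limit of approximate fixed points, additivity and homotopy by choosing a single small $\varepsilon$, normalization) is exactly as you say.
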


\subsection{Fixed point results}
We begin with two theorems on the existence and localization of one
solution of the operator equation $N(u,v)=(u,v)$. Set $K=K_1\times K_2$ and
\[C=C(R_1,R_2):=\left\{ \left( u,v\right) \in K_1\times K_2:\left| u\right| \leq R_1,\ \left| v\right| \leq R_2\right\},\]
for some fixed numbers $R_1, R_2$.

The first Theorem is a generalization of Theorem 2.17 of \cite{b:gi-mm-rp}.

\begin{thm}\label{thm:existence 1a}
Assume that there exist numbers $r_i,R_i$, $i=1,2$ with $0<r_i<\fl{\psi_i}R_i$ such that
\begin{equation}\label{eq:ind0}
\inf\limits_{\substack{ (u,v)\in C  \\ \fl u  =r_1,\fl v  \leq
r_2}}\fl{ N_1 (u,v)} > r_1,\ \
\inf\limits_{\substack{ (u,v)\in C  \\ \fl u  \leq r_1,\fl v
=r_2}}\fl{ N_2(u,v)} > r_2,
\end{equation}
and
\begin{equation}
\sup\limits_{(u,v)\in C}|N_{i}(u,v)|\leq R_{i}\ \ \ \ (i=1,2).  \label{eq:ind1}
\end{equation}

Then $N$ has at least one fixed point $(u,v)\in K_1\times K_2$ such that $|u|\leq R_1$,
$|v|\leq R_2$ and either $\fl u  > r_1 $ or $\fl v  >r_2$.
\end{thm}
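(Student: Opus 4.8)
The plan is to apply the Granas index of Proposition~\ref{fpi-pro} on the bounded convex set $C=C(R_1,R_2)$ and to split it into the region where both functionals are small and its complement. First I would observe that \eqref{eq:ind1} gives $N(C)\subseteq C$, so that, $N$ being completely continuous and $C$ bounded, $N|_{C}$ is compact; the last assertion of Proposition~\ref{fpi-pro} then yields $\mathrm{ind}_{C}(N,C)=1$. I introduce the set
\begin{equation*}
U:=\{(u,v)\in C:\ \fl{u}<r_1,\ \fl{v}<r_2\},
\end{equation*}
which is open in $C$ because the functionals are continuous. Using positive homogeneity one checks that $\overline U=\{(u,v)\in C:\ \fl{u}\le r_1,\ \fl{v}\le r_2\}$: for such $(u,v)$ the points $(su,sv)$ with $s\uparrow 1$ stay in $K_1\times K_2$, satisfy $\fl{su}=s\fl{u}<r_1$ and $\fl{sv}<r_2$ and $|su|\le|u|\le R_1$, $|sv|\le R_2$, hence lie in $U$ and converge to $(u,v)$. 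Consequently
\begin{equation*}
C\setminus\overline U=\{(u,v)\in C:\ \fl{u}>r_1\ \text{or}\ \fl{v}>r_2\},
\end{equation*}
so the fixed point asserted by the theorem is precisely one lying in $C\setminus\overline U$.

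The second step is to verify that $N$ has no fixed point on $\partial U$. A point of $\partial U=\overline U\setminus U$ satisfies either $\fl{u}=r_1$ with $\fl{v}\le r_2$, or $\fl{v}=r_2$ with $\fl{u}\le r_1$. If $(u,v)$ were a fixed point of the first type, then $r_1=\fl{u}=\fl{N_1(u,v)}$, contradicting the first strict inequality in \eqref{eq:ind0}; the second type is excluded symmetrically by the second inequality in \eqref{eq:ind0}. Thus $N$ is admissible on $\partial U$, and the additivity property (ii) of Proposition~\ref{fpi-pro}, applied with the disjoint open sets $U$ and $C\setminus\overline U$, gives
\begin{equation*}
\mathrm{ind}_{C}(N,C)=\mathrm{ind}_{C}(N,U)+\mathrm{ind}_{C}(N,C\setminus\overline U).
\end{equation*}
Hence, once I establish $\mathrm{ind}_{C}(N,U)=0$, it follows that $\mathrm{ind}_{C}(N,C\setminus\overline U)=1\neq 0$, and the existence property (i) produces a fixed point in $C\setminus\overline U$, which is exactly the conclusion.

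It therefore remains to prove the claim $\mathrm{ind}_{C}(N,U)=0$, which I would obtain by pushing $N$ towards the ``corner'' $p:=(R_1\psi_1,R_2\psi_2)$ of $C$ through the homotopy
\begin{equation*}
H\big((u,v),t\big):=(1-t)\,N(u,v)+t\,p,\qquad t\in[0,1].
\end{equation*}
Since $|\psi_i|=1$ and $|N_i(u,v)|\le R_i$ on $C$, the norm estimate $|(1-t)N_i(u,v)+tR_i\psi_i|\le R_i$ shows that $H$ maps $\overline U\times[0,1]$ into $C$; it is compact, and $H(\cdot,1)\equiv p$ is a constant map with $\fl{R_i\psi_i}=R_i\fl{\psi_i}>r_i$ by the standing assumption $r_i<\fl{\psi_i}R_i$, so $p\notin\overline U$ and the normalization property (iv) gives index $0$ at $t=1$. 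By homotopy invariance (iii) the claim then reduces to excluding fixed points of $H$ on $\partial U$. On the face $\fl{u}=r_1$, $\fl{v}\le r_2$ such a fixed point would satisfy $u=(1-t)N_1(u,v)+tR_1\psi_1$, whence $(1-t)N_1(u,v)\preceq u$ and $tR_1\psi_1\preceq u$, and I would combine the lower bound $\fl{N_1(u,v)}>r_1$ from \eqref{eq:ind0} with the gap $r_1<\fl{\psi_1}R_1$ to contradict $\fl{u}=r_1$; the face $\fl{v}=r_2$ is symmetric.

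I expect this last estimate to be the main obstacle. Because $\fl{\cdot}$ is merely positively homogeneous, one cannot distribute it across the convex combination $(1-t)N_1(u,v)+tR_1\psi_1$, and the two order relations above only give $\fl{u}\ge(1-t)\fl{N_1(u,v)}$ and $\fl{u}\ge tR_1\fl{\psi_1}$, neither of which alone exceeds $r_1$ for intermediate $t$. The delicate point is thus to extract from the behaviour of $\fl{\cdot}$ on sums in $K_i$ a genuine lower bound for $\fl{(1-t)N_1(u,v)+tR_1\psi_1}$ that stays strictly above $r_1$ \emph{uniformly} in $t\in[0,1]$; it is precisely here that the monotonicity \eqref{eq:fl monot}, the domination \eqref{eq: prec-max} and the resulting relation \eqref{eq:rel-fl-norm} must be exploited in full, rather than the mere membership relations that suffice for the admissibility check on $\partial U$.
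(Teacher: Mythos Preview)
Your overall strategy is exactly the paper's: show $\mathrm{ind}_C(N,C)=1$, introduce $U$, verify admissibility on $\partial U$, and kill $\mathrm{ind}_C(N,U)$ via the straight-line homotopy to $p=(R_1\psi_1,R_2\psi_2)$. The gap is that you stop precisely at the point that matters. You yourself flag the obstacle---from $u=(1-t)N_1(u,v)+tR_1\psi_1$ you only extract $(1-t)N_1(u,v)\preceq u$ and $tR_1\psi_1\preceq u$, and correctly note that neither yields $\fl{u}>r_1$ for intermediate $t$---but you do not resolve it. As written, the proof is incomplete.

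The missing trick is a different algebraic rearrangement. Do not split the convex combination; instead write
\[
u = N_1(u,v) + t\bigl(R_1\psi_1 - N_1(u,v)\bigr).
\]
Now use \eqref{eq: prec-max} and \eqref{eq:ind1} together: $N_1(u,v)\preceq |N_1(u,v)|\psi_1\preceq R_1\psi_1$, so $R_1\psi_1 - N_1(u,v)\in G_1$. Since $t\ge 0$ and $G_1$ is a wedge, $t(R_1\psi_1 - N_1(u,v))\in G_1$, hence $u\succeq N_1(u,v)$ with \emph{no} factor $(1-t)$. Monotonicity \eqref{eq:fl monot} then gives $r_1=\fl{u}\ge\fl{N_1(u,v)}>r_1$, the desired contradiction. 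The case $t=1$ you already handled, and the face $\fl{v}=r_2$ is symmetric. So the ingredients you listed are indeed the right ones; what was missing is that \eqref{eq: prec-max} is applied to $N_1(u,v)$ (not to $u$) to compare it with $R_1\psi_1$, which lets you absorb the $t$ entirely into the wedge $G_1$ instead of into the functional.
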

\begin{proof}
The assumption \eqref{eq:ind1} implies that $N(C)\subset C$. Therefore, by
Proposition~\ref{fpi-pro}, we obtain $\mathrm{ind}_{C}(N,C)=1$. Let
\begin{equation*}
U:=\left\{ \left( u,v\right) \in C:\fl u <r_1,\ \fl v <r_2\right\}.
\end{equation*}%
This is an open set, whose boundary $\partial U$ with respect to $C$ is equal to $\partial
U=A_1 \cup A_2,$ where
\begin{eqnarray*}
A_1  &=&\left\{ \left( u,v\right) \in C:\fl u  =r_1,\
\fl v \leq r_2\right\},\  \\
A_2 &=&\left\{ \left( u,v\right) \in C:\fl u \leq
r_1,\ \fl v =r_2\right\} .
\end{eqnarray*}%
Observe that \eqref{eq:ind0} implies that there are no fixed points of $N$ on $\partial U$. Therefore, the indices $\mathrm{ind}_{C}(N,U)$ and $\mathrm{ind}_{C}(N,C\setminus \overline{U})$ are well defined and their sum, by the additivity property of the index, is equal to one. Therefore, it
suffices to prove that $\mathrm{ind}_{C}(N,U)=0.$ Take $h=(R_1 \psi_1,R_2\psi _2)\in C$ and consider the homotopy $H:C\times \left[ 0,1\right] \rightarrow C$,
\begin{equation*}
H\left( u,v,t\right) :=th+(1-t)N(u,v).
\end{equation*}%
We claim that $H$ is fixed point free on $\partial U$. Since
\begin{eqnarray}\label{star}
\fl {R_{i}\psi _{i}}  &=&R_{i}\fl{ \psi _{i}}>r_{i}, \quad i=1,2.
\end{eqnarray}%
we have $\left( u,v\right) \neq h=H\left( u,v,1\right) $ for all $%
\left( u,v\right) \in \partial U.$ It remains to show that $H\left(
u,v,t\right) \neq \left( u,v\right) $ for $\left( u,v\right) \in \partial U$
and $t\in \left( 0,1\right) .$ Assume the contrary. Then there exists $%
(u,v)\in A_1 \cup A_2$ and $t\in (0,1)$ such that
\begin{equation}
(u,v)=th+(1-t)N(u,v).  \label{eq:homotopy not good}
\end{equation}%
Suppose that $(u,v)\in A_1 .$ Then,
\[N_1(u,v)\preceq |N_1(u,v)|\psi_1\preceq R_1\psi_1\] and
exploiting the first coordinate of the equation \eqref{eq:homotopy not good}, we obtain
\begin{equation}
u = N_1 (u,v)+ t(R_1 \psi _1 -N_1 (u,v))\succeq N_1(u,v).
\end{equation}%
Using the monotonicity of $\fl\cdot $ and \eqref{eq:ind0} we obtain
$r_1=\fl u\geq \fl{N_1(u,v)}>r_1$, which is impossible. Similarly, we derive a contradiction if $\left(u,v\right) \in A_2.$

By the homotopy invariance of the index we obtain $\mathrm{ind}_C(N,U)=\mathrm{ind}_C(h,U).$ From (\ref{star}) we have $h\not\in \overline{U}$, hence $\mathrm{ind}_C(N,U)=\mathrm{ind}_C(h,U)=0,$ as we wished.
\end{proof}

\begin{rem}\label{rem:inf>=}
From the proof we can deduce that if we change the assumption \eqref{eq:ind0} into
\begin{equation}\label{eq:ind0>=}
\inf\limits_{\substack{ (u,v)\in C  \\ \fl u  =r_1,\fl v  \leq
r_2}}\fl{ N_1 (u,v)} \geq r_1,\ \
\inf\limits_{\substack{ (u,v)\in C  \\ \fl u  \leq r_1,\fl v
=r_2}}\fl{ N_2(u,v)} \geq r_2,
\end{equation}
then we obtain at least one fixed point $(u,v)\in C$ with slightly weaker localization: $\fl u  \geq r_1 $ or $\fl v  \geq r_2$. The assumption \eqref{eq:ind0>=} permits the existence of fixed points of $N$ on $\partial U$. The assumption \eqref{eq:ind0} is more convenient when dealing with multiplicity results.
\end{rem}

\begin{rem}\label{rem:lowerbound_norm} We observe that, using the relation \eqref{eq:rel-fl-norm}, a lower bound for the solution in terms of the functional $\fl\cdot$ provides a lower bound for the norm of the solution, namely
\[\fl{u}> r_1 \implies |u|> \fl{\psi_1}^{-1} r_1,\quad \fl{v}> r_2 \implies |v|> \fl{\psi_2}^{-1} r_2.\]
\end{rem}
\begin{rem}\label{comp}
 The main differences between Theorem \ref{thm:existence 1a} and Theorem 2.17 of \cite{b:gi-mm-rp} consist in:
\begin{itemize}
\item The possibility of considering a positively homogeneous functional $\fl\cdot$ instead of a seminorm.
\item The assumption on the cone; in Theorem 2.17 of \cite{b:gi-mm-rp} it is needed the existence of  $\psi$ such that $u\leq |u|\psi_i$ for all $u\in E_i$, where $\leq$ is the order induced by the cone $K_i$, $i=1,2$. Here, instead, we can consider a semiorder.
\end{itemize}
Under the point of view of the applicability of our novel approach to parabolic problems, this is highlighted in the Remarks \ref{rem:choiceC0}
and \ref{rem:WhySuchHarnack}.
\end{rem}
The second Theorem is in the spirit of Theorem 2.9 and Remark 2.16 of \cite{b:gi-mm-rp}.
\begin{thm}\label{thm:existence 2a} 
Assume that there exist numbers $r_i,R_i$, $i=1,2$ with $0<r_i<\fl{\psi_i}R_i$ such that
\begin{equation}\label{eq:ind0'}
\inf\limits_{\substack{ (u,v)\in C  \\ \fl u  =r_1,\fl v  \geq
r_2}}\fl{ N_1 (u,v)} > r_1,\ \
\inf\limits_{\substack{ (u,v)\in C  \\ \fl u  \geq r_1,\fl v
=r_2}}\fl{ N_2(u,v)} > r_2,
\end{equation}
and
\begin{equation}\label{eq:ind1'}
\sup\limits_{(u,v)\in C}|N_{i}(u,v)|\leq R_{i}\ \ \ \ (i=1,2).
\end{equation}

Then $N$ has at least one fixed point $(u,v)\in K_1\times K_2$ such that $|u|\leq R_1,$
$|v|\leq R_2$ and  $\fl u  > r_1 $, $\fl v  > r_2.$
\end{thm}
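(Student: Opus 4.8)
The plan is to follow the template of the proof of Theorem~\ref{thm:existence 1a}, adjusting only the open set on which the index is computed so as to detect a fixed point lying in the region where \emph{both} functionals are large. As before, \eqref{eq:ind1'} gives $N(C)\subseteq C$, whence $\mathrm{ind}_C(N,C)=1$ by Proposition~\ref{fpi-pro}. The key change is to introduce the two complementary open subsets of $C$,
\[
D:=\left\{(u,v)\in C:\fl u>r_1,\ \fl v>r_2\right\},\qquad
U:=\left\{(u,v)\in C:\fl u<r_1\ \text{or}\ \fl v<r_2\right\},
\]
so that the desired conclusion is precisely that $N$ has a fixed point in $D$. I would prove this by showing $\mathrm{ind}_C(N,U)=0$ and then invoking the additivity property: since $D$ and $U$ are disjoint and open in $C$, once no fixed points lie on the common boundary it follows that $1=\mathrm{ind}_C(N,C)=\mathrm{ind}_C(N,D)+\mathrm{ind}_C(N,U)$, so $\mathrm{ind}_C(N,D)=1\neq 0$ and the existence property yields a fixed point with $\fl u>r_1$ and $\fl v>r_2$, $|u|\le R_1$, $|v|\le R_2$.

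The heart of the argument is the boundary analysis. First I would check that $\partial U\subseteq A_1\cup A_2$, where $A_1=\{(u,v)\in C:\fl u=r_1,\ \fl v\ge r_2\}$ and $A_2=\{(u,v)\in C:\fl u\ge r_1,\ \fl v=r_2\}$: a boundary point lies in $C\setminus U$, hence satisfies $\fl u\ge r_1$ and $\fl v\ge r_2$, and it cannot satisfy both inequalities strictly, since then it would be interior to the open set $D$ and so could not be a limit of points of $U$. Then, exactly as in Theorem~\ref{thm:existence 1a}, I take $h=(R_1\psi_1,R_2\psi_2)\in C$ and the compact homotopy $H(u,v,t)=th+(1-t)N(u,v)$, which maps into $C$ by convexity. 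To see $H$ is fixed point free on $\partial U$, suppose $(u,v)\in A_1$ is fixed by $H(\cdot,\cdot,t)$. Using \eqref{eq: prec-max} and the bound $|N_1(u,v)|\le R_1$ from \eqref{eq:ind1'} we get $N_1(u,v)\preceq |N_1(u,v)|\psi_1\preceq R_1\psi_1$, hence from the first coordinate $u=N_1(u,v)+t\bigl(R_1\psi_1-N_1(u,v)\bigr)\succeq N_1(u,v)$; monotonicity \eqref{eq:fl monot} together with the first inequality in \eqref{eq:ind0'} then forces $r_1=\fl u\ge\fl{N_1(u,v)}>r_1$, a contradiction. The case $(u,v)\in A_2$ is symmetric, using the second coordinate and the second inequality in \eqref{eq:ind0'}.

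Finally, since $\fl{R_i\psi_i}=R_i\fl{\psi_i}>r_i$ by the hypothesis $r_i<\fl{\psi_i}R_i$, the point $h$ lies in $D$ and hence $h\notin\overline U$; the normalization property gives $\mathrm{ind}_C(h,U)=0$, and homotopy invariance yields $\mathrm{ind}_C(N,U)=\mathrm{ind}_C(h,U)=0$, completing the scheme. I expect the only genuinely delicate point to be the treatment of the boundary of $U$: unlike the set used in Theorem~\ref{thm:existence 1a}, the region $U$ is non-convex (it is L-shaped in the $(\fl u,\fl v)$ plane), so some care is required to verify $\partial U\subseteq A_1\cup A_2$ and to confirm that the two halves of \eqref{eq:ind0'} apply to the appropriate boundary pieces. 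The homotopy itself, however, still takes values in the convex set $C$, so homotopy invariance applies verbatim and no new compactness issue arises.
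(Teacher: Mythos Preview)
Your proposal is correct and matches the paper's approach essentially line for line: your set $U$ is the paper's $V$, your boundary pieces $A_1,A_2$ are the paper's $B_1,B_2$, and the homotopy to $h=(R_1\psi_1,R_2\psi_2)$ together with the verification that $h\notin\overline{U}$ is exactly what the paper sketches. Your explicit justification of the inclusion $\partial U\subseteq A_1\cup A_2$ and of the additivity step via the complementary set $D=C\setminus\overline{U}$ simply fills in details the paper leaves implicit.
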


\begin{proof}
The proof is similar to the proof of Theorem \ref{eq:ind0} and \cite[Theorem 2.9]{b:gi-mm-rp} and we only sketch it.
As before, the assumption \eqref{eq:ind1'} implies that $N(C)\subset C.$ Thus, $%
\mathrm{ind}_C(N,C)=1.$ In order to finish the proof, it is sufficient to
show that $\mathrm{ind}_C(N,V)=0,$ where
\begin{equation*}
V:=\left\{ \left( u,v\right) \in C:\fl{ u} <r_1 \ \text{or\
}\fl{ v} <r_2\right\} .
\end{equation*}%
We have $\partial V=B_1 \cup B_2,$ where
\begin{eqnarray*}
B_1  &=&\left\{ \left( u,v\right) \in C:\fl{ u} =r_1,\
\fl{ v} \geq r_2\right\}, \\
B_2 &=&\left\{ \left( u,v\right) \in C:\fl{ u} \geq
r_1,\ \fl{ v} =r_2\right\} .
\end{eqnarray*}%
By \eqref{eq:ind0'} we obtain $N$ has no fixed points on $\partial V$. Consider the same homotopy as in the proof of Theorem \ref{thm:existence 1a}, that is
\begin{equation*}
H\left( u,v,t\right) =th+(1-t)N(u,v),\ \text{ where }\ h=(R_1 \psi
_1,R_2\psi _2).
\end{equation*}%
As before we can prove that $H$ is fixed point free on $\partial V$. Therefore $\mathrm{ind}_C(N,V)=\mathrm{ind}_C(h,V)=0$, since, like in the previous proof, $h\notin \overline{V}.$
\end{proof}

The result, in the spirit of Lemma~4 of \cite{b:gipp-nonlin}, allows different types
of growth of the operators and is a modification of Theorem 2.4 of \cite{b:gi-mm-rp}.

\begin{thm}
\label{thm:existence or a} Assume that there exist numbers $r_i,R_i$ with $0<r_i<\fl{\psi_i}R_i$  such that
\begin{equation}\label{eq:ind1''}
\sup\limits_{(u,v)\in C}|N_{i}(u,v)|\leq R_{i}\quad  (i=1,2),
\end{equation}%
and
\begin{equation}\label{eq:ind0 or}
\inf_{(u,v)\in A} \fl{N_1(u,v)} \geq r_1\ {\text{ or }}\ \inf_{(u,v)\in A}
\fl{N_2(u,v)} \geq r_2,
\end{equation}
where $A$ is a subset of the set
\begin{equation*}
U=\mbox{$\left\{(u,v)\in C:\;\fl u < r_1,\ \fl v< r_2\right\}$}\subsetneq C.
\end{equation*}
Then $N$ has at least one fixed point $(u,v)\in K_1\times K_2$ such that $|u|\leq R_1,$
$|v|\leq R_2$ and $(u,v)\not\in A.$
\end{thm}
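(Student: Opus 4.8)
The plan is to observe that, in contrast with Theorems~\ref{thm:existence 1a} and~\ref{thm:existence 2a}, here no homotopy computation is needed: the existence of a fixed point will come directly from the global index on $C$, while the one-sided lower bound~\eqref{eq:ind0 or} serves only to push that fixed point outside $A$.

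First I would argue, exactly as in the two previous proofs, that~\eqref{eq:ind1''} yields $N(C)\subset C$, so that $\mathrm{ind}_C(N,C)=1$ by the final assertion of Proposition~\ref{fpi-pro}. The existence property~(i) of the index then provides at least one fixed point $(u,v)\in C$, for which automatically $|u|\leq R_1$ and $|v|\leq R_2$.

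Next I would show that no fixed point of $N$ can lie in $A$. Suppose, for contradiction, that $(u,v)\in A$ satisfies $N(u,v)=(u,v)$. Since $A\subset U$, we have $\fl u<r_1$ and $\fl v<r_2$. By~\eqref{eq:ind0 or} one of two alternatives holds. If $\inf_{(u,v)\in A}\fl{N_1(u,v)}\geq r_1$, then in particular $\fl u=\fl{N_1(u,v)}\geq r_1$, contradicting $\fl u<r_1$; if instead $\inf_{(u,v)\in A}\fl{N_2(u,v)}\geq r_2$, the same reasoning applied to the second coordinate gives $\fl v=\fl{N_2(u,v)}\geq r_2$, contradicting $\fl v<r_2$. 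In either case we reach a contradiction, so the fixed point found above must lie in $C\setminus A$, which is the desired conclusion.

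The argument is essentially immediate, so I do not expect a genuine obstacle. The only point requiring a little care is the disjunctive form of~\eqref{eq:ind0 or}: the exclusion of $A$ must be carried out for each of the two alternatives separately, but since a hypothetical fixed point in $A$ would contradict whichever alternative actually holds, this is harmless. It is worth remarking that, unlike the earlier theorems, this result does not assert that $\mathrm{ind}_C(N,U)$ vanishes---it uses only that the total index on $C$ is nonzero---which is precisely why the weaker, one-sided hypothesis~\eqref{eq:ind0 or} suffices and why a correspondingly weaker localization, namely $(u,v)\notin A$, is obtained.
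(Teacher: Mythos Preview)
Your proof is correct and essentially the same as the paper's. The only cosmetic difference is that the paper invokes Schauder's fixed point theorem directly to obtain a fixed point in $C$, whereas you obtain it via $\mathrm{ind}_C(N,C)=1$ and the existence property of the index; these are equivalent here, and the exclusion argument for $A$ is identical.
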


\begin{proof}
Since $N$ is a completely continuous mapping in the bounded closed convex
set $C,$ by Schauder's fixed point theorem, it possesses a fixed point $%
(u,v)\in C.$ We now show that the fixed point is not in $A.$ Suppose on the
contrary that $(u,v)=N(u,v)$ and $(u,v)\in A.$ Suppose that the first
inequality from \eqref{eq:ind0 or} is satisfied. Then
\begin{equation*}
r_1 >\fl u =\fl{ N_1 (u,v)} \geq r_1,
\end{equation*}%
which is impossible. Similarly we arrive at a contradiction, if the second
inequality from \eqref{eq:ind0 or} is satisfied.
\end{proof}

\subsection{Multiplicity results}
We present now some multiplicity results that are analogues of the results of Subsection 2.3 of \cite{b:gi-mm-rp}.

\begin{thm}\label{thm:existence 3sols a}
Assume that there exist numbers $\rho _{i}, r_{i},
R_{i}$ with
\begin{equation}
0<\fl{\psi_i}\rho _{i}<r_{i}<\fl{\psi_i} R_{i}\ \ \
(i=1,2),
\end{equation}%
such that
\begin{equation}\label{eq:ind0'i}
\inf\limits_{\substack{ (u,v)\in C  \\ \fl u  =r_1,\fl v  \geq
r_2}}\fl{ N_1 (u,v)} > r_1,\ \
\inf\limits_{\substack{ (u,v)\in C  \\ \fl u  \geq r_1,\fl v
=r_2}}\fl{ N_2(u,v)} > r_2,
\end{equation}

\begin{equation}\label{eq:ind1'''}
\sup\limits_{(u,v)\in C}|N_{i}(u,v)|\leq R_{i}\ \ (i=1,2),   \end{equation}%
and
\begin{equation}\label{eq:ind1-rho} N(u,v)\neq \lambda (u,v),\ \text{ for }\lambda \geq 1\text{ and }(\left| u\right| =\rho _1,\ \left| v\right| \leq \rho _2\text{ or } \left| u\right| \leq \rho _1,\ \left| v\right| =\rho _2).
 \end{equation}
Then $N$ has at least three fixed points $(u_{i},v_{i})\in C$ $(
i=1,2,3) $ with
\begin{align*}
| u_1 | &<\rho _1,\ | v_1 |<\rho _2\ ( \text{possibly zero solution}); \\
\fl{u_2} &<r_1  \text{ or }  \fl{v_2} <r_2;\ | u_2| >\rho _1 \ \text{or }|v_2| >\rho _2\ ( \text{possibly one solution component zero}) ; \\
\fl{ u_{3}} &>r_1,\ \fl{v_{3}}>r_2\  (\text{both solution components nonzero}).
\end{align*}
\end{thm}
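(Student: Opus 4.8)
\emph{Plan.} The plan is to produce the three solutions by evaluating the fixed point index of $N$ on three nested open subsets of $C=C(R_1,R_2)$ and then invoking the additivity property of the index twice. Besides $C$ itself I would work with
\[
U_\rho:=\left\{(u,v)\in C:\ |u|<\rho_1,\ |v|<\rho_2\right\}
\qquad\text{and}\qquad
V:=\left\{(u,v)\in C:\ \fl u<r_1\ \text{or}\ \fl v<r_2\right\}.
\]
The first thing to record is the inclusion $\overline{U_\rho}\subset V$: for $(u,v)\in\overline{U_\rho}$ one has $|u|\le\rho_1$, so by \eqref{eq:rel-fl-norm} and the hypothesis $\fl{\psi_1}\rho_1<r_1$ we get $\fl u\le\fl{\psi_1}|u|\le\fl{\psi_1}\rho_1<r_1$, whence $(u,v)\in V$. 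Consequently $U_\rho\subset\overline{U_\rho}\subset V\subset C$.

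Next I would compute the three index values. As in the proof of Theorem \ref{thm:existence 1a}, condition \eqref{eq:ind1'''} yields $N(C)\subset C$ and hence $\ind_C(N,C)=1$; moreover assumptions \eqref{eq:ind0'i} and \eqref{eq:ind1'''} coincide with those of Theorem \ref{thm:existence 2a}, so the argument there gives $\ind_C(N,V)=0$ and, in particular, that $N$ is fixed point free on $\partial V$. For $U_\rho$ I would use the compact homotopy $h(u,v,t)=tN(u,v)$, which maps $C\times[0,1]$ into $C$ by \eqref{eq:ind1'''}. At $t=0$ the map is the constant $(0,0)$, which differs from every point of $\partial U_\rho$ because $(0,0)\in U_\rho$; and if $tN(u,v)=(u,v)$ for some $(u,v)\in\partial U_\rho$ and $t\in(0,1]$, then $N(u,v)=\lambda(u,v)$ with $\lambda=1/t\ge1$, contradicting \eqref{eq:ind1-rho}. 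Thus $h$ is admissible, so by homotopy invariance $\ind_C(N,U_\rho)$ equals the index of the constant map $(0,0)$, which is $1$ by normalization since $(0,0)\in U_\rho$. The case $\lambda=1$ of \eqref{eq:ind1-rho} shows in addition that $N$ has no fixed point on $\partial U_\rho$.

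Finally I would extract the solutions. Since $N$ is fixed point free on $\partial V$, additivity applied to the disjoint open sets $V$ and $C\setminus\overline V$ gives $1=\ind_C(N,C)=\ind_C(N,V)+\ind_C(N,C\setminus\overline V)=\ind_C(N,C\setminus\overline V)$, so $\ind_C(N,C\setminus\overline V)=1$ and the existence property produces a fixed point $(u_3,v_3)$ with $\fl{u_3}>r_1$ and $\fl{v_3}>r_2$. Since $N$ is fixed point free on $\partial U_\rho$ and $\overline{U_\rho}\subset V$, additivity applied inside $V$ to $U_\rho$ and $V\setminus\overline{U_\rho}$ gives $0=\ind_C(N,V)=\ind_C(N,U_\rho)+\ind_C(N,V\setminus\overline{U_\rho})=1+\ind_C(N,V\setminus\overline{U_\rho})$, hence $\ind_C(N,V\setminus\overline{U_\rho})=-1\ne0$, producing a fixed point $(u_2,v_2)$ with ($\fl{u_2}<r_1$ or $\fl{v_2}<r_2$) and ($|u_2|>\rho_1$ or $|v_2|>\rho_2$); finally $\ind_C(N,U_\rho)=1$ yields $(u_1,v_1)\in U_\rho$. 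The three fixed points lie in the pairwise disjoint sets $U_\rho$, $V\setminus\overline{U_\rho}$ and $C\setminus\overline V$, hence are distinct. The step demanding most care is the boundary bookkeeping: one must confirm that $N$ has no fixed points on either $\partial V$ or $\partial U_\rho$ so that all three indices are well defined and both additivity splittings are legitimate, and that $\overline{U_\rho}\subset V$ so the inner splitting takes place inside $V$; getting the signs right there is exactly what forces $\ind_C(N,V\setminus\overline{U_\rho})=-1$ and thus the genuinely intermediate second solution.
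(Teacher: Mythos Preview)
Your proof is correct and follows essentially the same route as the paper's: your set $U_\rho$ is the paper's $W$, you establish the same inclusion $\overline{U_\rho}\subset V$, compute the same three indices $\ind_C(N,C)=1$, $\ind_C(N,V)=0$, $\ind_C(N,U_\rho)=1$ via the same homotopy to zero, and extract the three solutions by the identical additivity splittings. The only cosmetic difference is that the paper also records the intermediate inclusion $\overline{W}\subset U\subset V$ (with $U$ from Theorem~\ref{thm:existence 1a}), which is not actually needed here.
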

\begin{proof}
Let $U,V$ be as in the proof of Theorems \ref{thm:existence 1a} and \ref{thm:existence 2a}. Strict inequalities
in (\ref{eq:ind0'i}) guarantee that $N$ is fixed point free on $\partial V.$
According to the proof of Theorem \ref{thm:existence 2a} we have $\mathrm{ind}%
_{C}(N,C)=1,$ $\mathrm{ind}_{C}(N,V)=0$ and therefore by the additivity
property, $\mathrm{ind}_{C}(N,C\setminus \overline{V})=1.$ Let%
\begin{equation*}
W:=\{ ( u,v) \in C:| u| <\rho _1,\
| v| <\rho _2\} .\text{ }
\end{equation*}%
From \eqref{eq:rel-fl-norm}, for every $( u,v) \in \overline{W},$ we have
\begin{equation*}
\fl{u}\leq \fl{\psi_1} |u| \leq \fl{\psi_1}\rho_1 <r_1
\end{equation*}%
and, similarly, $\fl v <r_2.$ Hence $(u,v)
\in U,$ which proves that $\overline{W}\subset U \subset V.$ Condition (\ref%
{eq:ind1-rho}) shows that $N$ is homotopic with zero on $W.$ Thus $\mathrm{%
ind}_{C}(N,W)=\mathrm{ind}_{C}(0,W)=1.$ Then $\mathrm{ind}_{C}(N,V\setminus
\overline{W})=0-1=-1.$ Consequently, there exist at least three fixed points
of $N,$ in $W,\ V\setminus \overline{W}$ and $C\setminus \overline{V}.$
\end{proof}

If we assume the following estimates of $\fl{N_{i}(u,v)}$:

\begin{equation}\label{eq:ind0''}
\inf\limits_{\substack{ (u,v)\in C  \\ \fl u  =r_1 }}\fl{ N_1 (u,v)} > r_1,\ \
\inf\limits_{\substack{ (u,v)\in C  \\ \fl v =r_2}}\fl{ N_2(u,v)} > r_2,
\end{equation}
then we can obtain a more precise localization for the solution $\left(
u_2,v_2\right) $ in Theorem \ref{thm:existence 3sols a}, the Figure \ref{fig:3sols} (analogous to Figure 1 of \cite{b:gi-mm-rp}) illustrates this fact.

\begin{figure}[h]
\centering
\includegraphics[width=\textwidth]{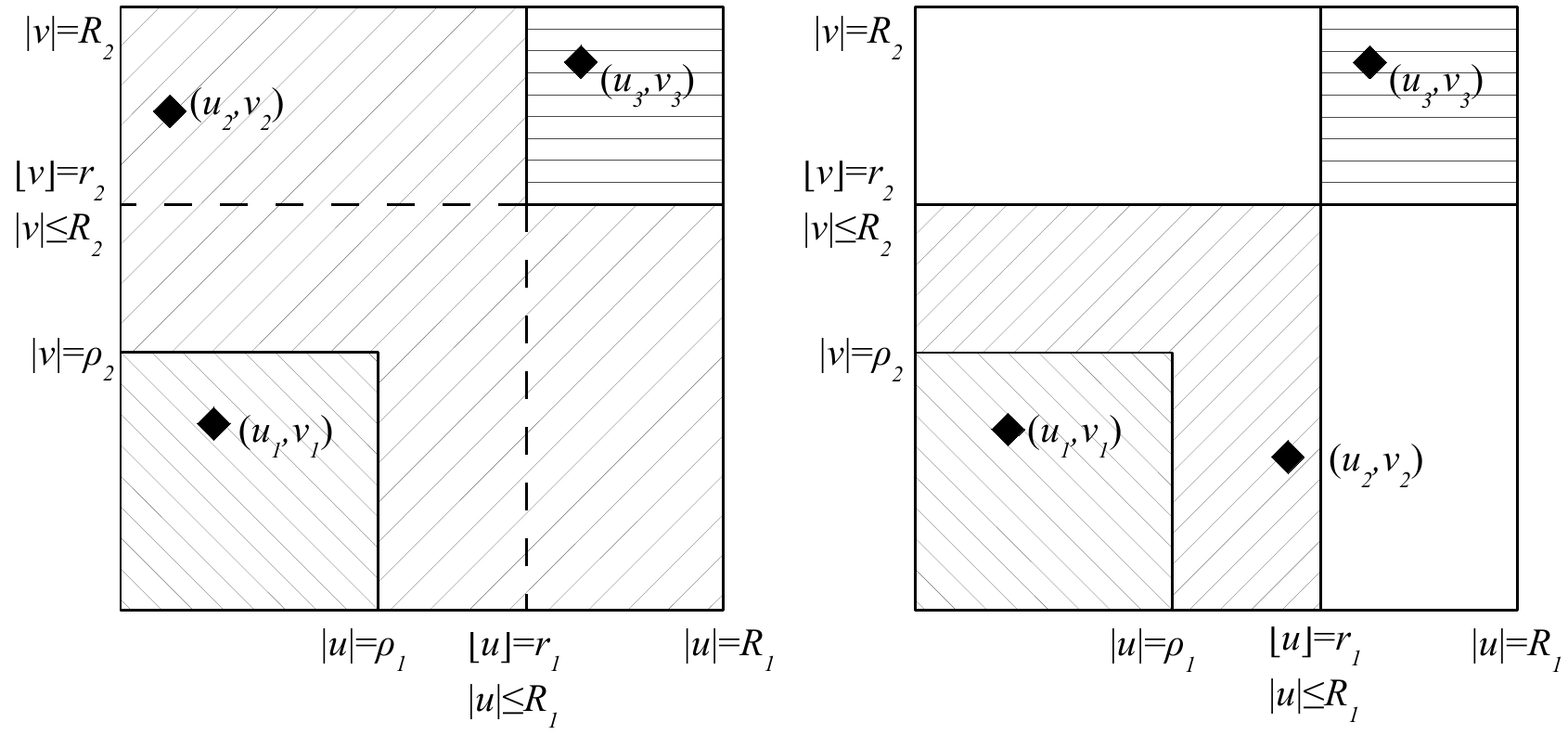}
\caption{Localization of the three solutions $(u_{i},v_{i})$ from Theorem
\protect\ref{thm:existence 3sols a} (on the left) and Theorem \protect\ref{thm:existence 3sols a'}
(on the right).}
\label{fig:3sols}
\end{figure}

\begin{thm}
\label{thm:existence 3sols a'} Suppose that all the assumptions of Theorem \emph{%
\ref{thm:existence 3sols a}} are satisfied with the condition \emph{(\ref{eq:ind0'i})}
replaced by \emph{(\ref{eq:ind0''})}. Then $N$ has at least three fixed points $%
(u_{i},v_{i})\in C$ $\left( i=1,2,3\right) $ with%
\begin{eqnarray*}
| u_1 | &<&\rho _1,\ \ | v_1 |
<\rho _2\ \ ( \text{possibly zero solution}); \\
\fl{u_2} &<&r_1,\ \ \fl{v_2}
<r_2;\ | u_2| >\rho _1 \ \text{or }|
v_2| >\rho _2\ \ ( \text{possibly one solution component
zero}) ; \\
\fl{ u_{3}} &>&r_1,\ \fl{v_{3}}>r_2\ \
( \text{both solution components nonzero}).
\end{eqnarray*}

\end{thm}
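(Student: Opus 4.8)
The plan is to exploit the fact that the strengthened hypothesis \eqref{eq:ind0''} is at once stronger than \eqref{eq:ind0} and than \eqref{eq:ind0'}, which lets me drive the index to zero on two different open sets simultaneously and then decompose $C$ more finely than in Theorem \ref{thm:existence 3sols a}. Retaining the earlier notation, I would set
\[U=\left\{(u,v)\in C:\fl u<r_1,\ \fl v<r_2\right\},\qquad V=\left\{(u,v)\in C:\fl u<r_1\ \text{or}\ \fl v<r_2\right\},\]
together with $W=\left\{(u,v)\in C:|u|<\rho_1,\ |v|<\rho_2\right\}$, so that $W\subset U\subset V\subset C$.

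First I would observe that the infimum in the first part of \eqref{eq:ind0} is taken over $\{\fl u=r_1,\ \fl v\le r_2\}$, a subset of $\{\fl u=r_1\}$, and analogously for the remaining constraints; hence \eqref{eq:ind0''} implies both \eqref{eq:ind0} and \eqref{eq:ind0'}. The homotopy argument of Theorems \ref{thm:existence 1a} and \ref{thm:existence 2a}, run with the identical choice $h=(R_1\psi_1,R_2\psi_2)$ and $H(u,v,t)=th+(1-t)N(u,v)$, therefore applies without change and gives $\mathrm{ind}_C(N,U)=0$ and $\mathrm{ind}_C(N,V)=0$; as usual \eqref{eq:ind1'''} yields $N(C)\subset C$, whence $\mathrm{ind}_C(N,C)=1$. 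The inclusion $\overline{W}\subset U$ is verified exactly as in Theorem \ref{thm:existence 3sols a} from \eqref{eq:rel-fl-norm} and $\fl{\psi_i}\rho_i<r_i$, and \eqref{eq:ind1-rho} gives $\mathrm{ind}_C(N,W)=\mathrm{ind}_C(0,W)=1$.

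The crux is then to apply the additivity property (Proposition \ref{fpi-pro}(ii)) across the \emph{finer} chain $W\subset U\subset V\subset C$, obtaining
\[\mathrm{ind}_C(N,U\setminus\overline{W})=\mathrm{ind}_C(N,U)-\mathrm{ind}_C(N,W)=-1,\qquad \mathrm{ind}_C(N,C\setminus\overline{V})=\mathrm{ind}_C(N,C)-\mathrm{ind}_C(N,V)=1.\]
Together with $\mathrm{ind}_C(N,W)=1$, this produces, by the existence property, a fixed point in each of the three pairwise disjoint open sets $W$, $U\setminus\overline{W}$ and $C\setminus\overline{V}$. Reading off the defining inequalities yields the three localizations; the sole difference from Theorem \ref{thm:existence 3sols a} is that the middle fixed point now lies in $U\setminus\overline{W}$ instead of $V\setminus\overline{W}$, which replaces ``$\fl{u_2}<r_1$ or $\fl{v_2}<r_2$'' by the sharper pair $\fl{u_2}<r_1$, $\fl{v_2}<r_2$.

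The step requiring the most care is checking that additivity is legitimately applicable, namely that $N$ has no fixed points on $\partial U$, $\partial V$ or $\partial W$. The absence of fixed points on $\partial U$ and on $\partial V$ is recorded in the proofs of Theorems \ref{thm:existence 1a} and \ref{thm:existence 2a}, and is now available since \eqref{eq:ind0''} implies both \eqref{eq:ind0} and \eqref{eq:ind0'}; the absence on $\partial W$ follows from \eqref{eq:ind1-rho} taken at $\lambda=1$. Once these are in place the index splittings above are justified and the three fixed points follow.
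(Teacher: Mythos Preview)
Your proposal is correct and follows essentially the same route as the paper: both argue that \eqref{eq:ind0''} implies \eqref{eq:ind0} and \eqref{eq:ind0'}, invoke the index computations from Theorems~\ref{thm:existence 1a} and~\ref{thm:existence 2a} to get $\mathrm{ind}_C(N,U)=\mathrm{ind}_C(N,V)=0$, and then combine these with $\mathrm{ind}_C(N,W)=1$ and additivity along the chain $W\subset U\subset V\subset C$ to obtain the three indices $1,-1,1$. Your write-up is in fact more explicit than the paper's, particularly in verifying the fixed-point-free boundaries needed for additivity.
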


\begin{proof}
The assumption (\ref{eq:ind0''}) implies both (\ref{eq:ind0}) and (\ref{eq:ind0'}) and that
there are no fixed points of $N$ on $\partial U$ and $\partial V.$ Hence, as
in the proofs of Theorems \ref{thm:existence 1a} and \ref{thm:existence 2a}, the
indices $\mathrm{ind}_{C}(N,U)$ and $\mathrm{ind}_{C}(N,V)$ are well defined
and equal $0.$ An analysis similar to that in the proof of Theorem \ref%
{thm:existence 3sols a} shows that
\begin{equation*}
\mathrm{ind}_{C}(N,W)=1,\ \mathrm{ind}_{C}(N,U\setminus \cl{W})=-1,\
\mathrm{ind}_{C}(N,C\setminus \cl{V})=1,
\end{equation*}%
which completes the proof.
\end{proof}

In order to ensure that the solution $(u_1,v_1 )$ from the theorems above
is nonzero, and thereby to obtain three \emph{nonzero} solutions, we use
some additional assumptions on $N$.

\begin{thm}\label{thm:existence 3sols a''}
Assume that all the conditions of Theorem {\ref{thm:existence 3sols a}} or Theorem {\ref{thm:existence 3sols a'}} are satisfied.
Consider $0<\varrho_i<\fl{\psi_i}\rho _i,
$ $i=1,2.$

\emph{(i)} If $N_1(0,0)\neq 0$ or $N_2(0,0)\neq 0$, then  the solution $(u_1,v_1 )$ from Theorem {\ref{thm:existence 3sols a}} or {\ref{thm:existence 3sols a'}} is nonzero.

\emph{(ii)} If
\begin{equation}
\inf\limits_{\substack{ (u,v)\in K,|u|\leq \rho _1,|v|\leq \rho _2 \\
\fl u =\varrho _1,\fl v \geq \varrho _2}}\fl{ N_1 (u,v)} \geq  \varrho_1,\ \
\inf\limits_{\substack{ (u,v)\in K,|u|\leq \rho _1,|v|\leq \rho _2 \\
\fl u \geq \varrho _1,\fl{v} =\varrho _2}}\fl{N_2(u,v)} \geq \varrho_2
\label{6varrho}
\end{equation}%
and
\begin{equation*}
|N_i (u,v)|\leq \rho _i \ \ \text{for\ \ }|u|\leq \rho _1,\ |v|\leq \rho
_2\ \ \ \left( i=1,2\right),
\end{equation*}%
then we can assume that the solution $(u_1,v_1 )$ from Theorem {\ref{thm:existence 3sols a}} or Theorem {\ref{thm:existence 3sols a'}} satisfies $\fl{u_1} \geq \varrho _1 $ and $\fl{v_1}\geq \varrho _2$;

\emph{(iii)} if
\begin{equation}
\inf\limits_{\substack{ (u,v)\in K,|u|\leq \tilde{\rho}_1,|v|\leq \tilde{
\rho}_2  \\ \fl u < \varrho_1,\fl v < \varrho _2}}
\fl{N_1 (u,v)} \geq \varrho _1 \ {\text{ or }}\ \inf\limits
_{\substack{ (u,v)\in K,|u|\leq \tilde{\rho}_1,|v|\leq \tilde{\rho}_2
\\ \fl u < \varrho _1,\fl v < \varrho _2}}\fl{N_2(u,v)} \geq \varrho _2  \label{6 or varrho}
\end{equation}%
for some$\ \tilde{\rho}_1 \leq \rho _1,\ \tilde{\rho}_2\leq \rho _2,$
then we can assume that the solution $(u_1,v_1 )$ from Theorem {\ref{thm:existence 3sols a}} or Theorem {\ref{thm:existence 3sols a'}} satisfies $\fl{u_1} \geq \varrho_1 $ or $\fl{v_1} \geq \varrho _2$ or $|u_1 |>\tilde{\rho}_1 $ or $|v_1 |>\tilde{\rho}_2.$
\end{thm}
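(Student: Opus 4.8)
The plan is to prove each of the three parts by refining the localization already established in Theorems \ref{thm:existence 3sols a} and \ref{thm:existence 3sols a'}. Recall that in those theorems the first solution $(u_1,v_1)$ was produced in the region $W=\{(u,v)\in C:|u|<\rho_1,\ |v|<\rho_2\}$ via the computation $\mathrm{ind}_C(N,W)=1$, which followed from condition \eqref{eq:ind1-rho} showing that $N$ is homotopic to zero on $W$. The strategy for all three parts is to further subdivide $W$, compute the index on the pieces, and use the additivity property of Proposition \ref{fpi-pro}(ii) together with the existence property (i) to pin down a finer location for $(u_1,v_1)$. Throughout I will freely use the relation \eqref{eq:rel-fl-norm}, namely $\fl u\leq\fl{\psi_1}|u|$, which translates norm bounds into bounds on the functional $\fl\cdot$ and vice versa.

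For part (i), the argument is direct and does not use the index. If $(u_1,v_1)=0$ were the first solution, then $0=(u_1,v_1)=N(u_1,v_1)=N(0,0)$, forcing both $N_1(0,0)=0$ and $N_2(0,0)=0$, contradicting the hypothesis. Hence under $N_1(0,0)\neq 0$ or $N_2(0,0)\neq 0$ the solution $(u_1,v_1)$ cannot be the zero solution. For part (ii), I would introduce the inner set $W_0=\{(u,v)\in C:\fl u<\varrho_1,\ \fl v<\varrho_2\}$ and show $\overline{W_0}\subset W$ using $\varrho_i<\fl{\psi_i}\rho_i$ and \eqref{eq:rel-fl-norm} exactly as $\overline{W}\subset U$ was shown in the proof of Theorem \ref{thm:existence 3sols a}. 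The boundary-estimate hypothesis \eqref{6varrho}, restricted to the ball $\{|u|\leq\rho_1,|v|\leq\rho_2\}$, is precisely the lower-bound condition of Theorem \ref{thm:existence 2a} applied with radii $\varrho_i$ in place of $r_i$; combined with the norm bound $|N_i(u,v)|\leq\rho_i$ it gives $N$ fixed point free on $\partial W_0$ and, by the homotopy argument of Theorem \ref{thm:existence 2a}, $\mathrm{ind}_C(N,W_0)=0$. Since $\mathrm{ind}_C(N,W)=1$, additivity yields $\mathrm{ind}_C(N,W\setminus\overline{W_0})=1$, so the solution can be chosen in $W\setminus\overline{W_0}$, i.e.\ with $\fl{u_1}\geq\varrho_1$ and $\fl{v_1}\geq\varrho_2$.

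For part (iii), the relevant tool is Theorem \ref{thm:existence or a} rather than Theorem \ref{thm:existence 2a}: the hypothesis \eqref{6 or varrho} is an ``or''-type lower bound on the set $A=\{(u,v)\in K:|u|\leq\tilde\rho_i,\ \fl u<\varrho_1,\ \fl v<\varrho_2\}$, which matches the form of assumption \eqref{eq:ind0 or}. I would restrict attention to the subregion where $(u_1,v_1)$ already lives and apply the contradiction argument of Theorem \ref{thm:existence or a}: a fixed point in $A$ would satisfy, say, $\varrho_1>\fl{u_1}=\fl{N_1(u_1,v_1)}\geq\varrho_1$, which is impossible, forcing $(u_1,v_1)\notin A$ and hence $\fl{u_1}\geq\varrho_1$ or $\fl{v_1}\geq\varrho_2$ or $|u_1|>\tilde\rho_1$ or $|v_1|>\tilde\rho_2$. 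The main obstacle, and the point requiring the most care, is bookkeeping the nested regions: one must verify the strict containments $\overline{W_0}\subset W$ (part (ii)) and that the smaller radii $\varrho_i,\tilde\rho_i$ interact correctly with the homotopy $h=(R_1\psi_1,R_2\psi_2)$ and with condition \eqref{eq:ind1-rho}, so that all the indices invoked are simultaneously well defined and the additivity splitting is legitimate; the functional estimates themselves are then routine applications of \eqref{eq:rel-fl-norm} and the monotonicity \eqref{eq:fl monot}.
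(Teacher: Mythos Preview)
Parts (i) and (iii) are fine and agree with the paper's argument. Part (ii), however, contains two genuine errors.

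First, the set $W_0=\{(u,v)\in C:\fl u<\varrho_1,\ \fl v<\varrho_2\}$ is the wrong one for the conclusion you want. The complement $W\setminus\overline{W_0}$ consists of pairs with $\fl u>\varrho_1$ \emph{or} $\fl v>\varrho_2$, not \emph{and}; to obtain the conjunction $\fl{u_1}\geq\varrho_1$ and $\fl{v_1}\geq\varrho_2$ you must work with the $V$-type set $\{\fl u<\varrho_1\text{ or }\fl v<\varrho_2\}$, whose boundary is exactly where hypothesis \eqref{6varrho} lives. Second, the claimed inclusion $\overline{W_0}\subset W$ is false: the relation \eqref{eq:rel-fl-norm} only lets you pass from norm bounds to functional bounds, not the reverse, so $\fl u\leq\varrho_1$ gives no control on $|u|$. (In the proof of Theorem~\ref{thm:existence 3sols a} the inclusion $\overline{W}\subset U$ went the other direction.) Consequently the additivity splitting of $\mathrm{ind}_C(N,W)$ that you propose is not available.

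The paper's route sidesteps all of this bookkeeping: instead of subdividing $W$ inside the large set $C=C(R_1,R_2)$, simply apply Theorem~\ref{thm:existence 2a} a second time with $r_i:=\varrho_i$ and $R_i:=\rho_i$. The extra hypothesis $|N_i(u,v)|\leq\rho_i$ for $|u|\leq\rho_1$, $|v|\leq\rho_2$ is precisely the invariance condition \eqref{eq:ind1'} for the smaller box $C(\rho_1,\rho_2)$, and \eqref{6varrho} is the lower-bound condition \eqref{eq:ind0'} there. Theorem~\ref{thm:existence 2a} (in its non-strict form, cf.\ Remark~\ref{rem:inf>=}) then produces a fixed point with $|u|\leq\rho_1$, $|v|\leq\rho_2$, $\fl u\geq\varrho_1$, $\fl v\geq\varrho_2$; since \eqref{eq:ind1-rho} forbids fixed points on $\partial W$, this point lies in $W$ and may be taken as $(u_1,v_1)$. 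No nested-region or homotopy-compatibility checks are needed beyond those already encapsulated in Theorem~\ref{thm:existence 2a}.
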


\begin{proof}
(i) The assumption impies that $(0,0)$ is not a fixed point.

(ii) The inequality follows from Theorem \ref{thm:existence 2a} applied in the
case of $r_i:=\varrho_i $ and $R_i :=\rho_i.$

(iii) From Theorem \ref{thm:existence or a} applied in the case of $r_i :=\varrho _i,$ $R_i :=\rho _i $ and
\[A=\set{(u,v)}{\fl u < \varrho_1, \fl v < \varrho_2, |u|\leq \tilde\rho_1, |v|\leq \tilde\rho_2},\]
we obtain there are no fixed points of $N$ in $A$, which ends the proof.
\end{proof}

The next Remark illustrates how Theorem \ref{thm:existence 2a} can be used to
prove the existence of more nontrivial solutions.

\begin{rem}\label{rem:many-sols}
If $N$ satisfies the conditions of Theorem \ref{thm:existence 2a} for all pairs

\begin{equation*}
0<r_i^j<\fl{\psi_i}R_i^j \ \text{for }\ i=1,2,\ j=1,2,\ldots,n,\end{equation*}
satisfying
\begin{equation*}
\fl{\psi_i}R_i^j<r_i^{j+1}\ \text{ for }\ i=1,2,\ j=1,2,\ldots,n-1,
\end{equation*}
then $N$ possesses at least $n$ nontrivial solutions $(u_j,v_j)$ with
\begin{equation*}
|u_j |\leq R_1 ^j,\ |v_j |\leq R_2^j,\ \fl{u_j} >
r_1^j,\ \fl{v_j} > r_2^j.
\end{equation*}%
Moreover, if \eqref{eq:ind1'} holds with the strict inequality, i.e. if
\[\sup\limits_{(u,v)\in K,|u|\leq R_1 ^j,|v|\leq R_2^j }|N_i (u,v)|<R_i ^j \quad(i=1,2),
\]
hold, then we have $n-1$ additional solutions $(\bar{u}_j,\bar{v}_j ),$ $%
j=1,\ldots,n-1$ such that
\begin{equation*}
|\bar u_j|<R_1^{j+1},\ |\bar v_j|<R_2^{j+1};\ |\bar u_j|>R_1^j\text{ or }|\bar v_j|>R_2^j;\ \fl{\bar u_j}
<r_1^{j+1}\text{ or }\fl{ \bar v_j} <r_2^{j+1}.
\end{equation*}%
The first conclusion follows from Theorem \ref{thm:existence 2a} applied $n$ times, whereas the second follows from Theorem \ref{thm:existence 3sols a} applied $
n-1$ times.
\end{rem}

\begin{rem}
We stress that the abstract results obtained in this section can be applied to the case of one equation. Furthermore, our results can be generalized to the case of systems of more than two equations. The idea is to consider the product space $E=\Pi_{i=1}^nE_i$ of the Banach spaces $E_i,$ endowed with the norms $|\cdot|_i,$ functionals $\fl\cdot_i $, and the pairs of cones and wedges $K_i \subset G_i\subset E_i $ such that \eqref{eq:fl monot}, \eqref{eq: prec-max} are satisfied for $i=1,2,...,n.$ In
 this setting we are interested in the existence and localization of fixed points of a given operator $N\colon K\rightarrow K,$ where $K=\Pi _{i=1}^n K_i .$ For example, let us consider the sets
\begin{equation*}
C=\mbox{$\left\{u\in K:\;|u_1|_1\leq R_1,\ldots,|u_n|_n\leq R_n\right\}$},\
U=\mbox{$\left\{u\in C:\;\fl{u_1}_1 < r_1,\ldots,\fl{u_n}_n < r_n\right\}$}
\end{equation*}%
for given radii $r_i,R_i>0 $ with $r_i <\fl{\psi_i}_i R_i,\ $ $i=1,...,n.$ If
\begin{equation*}
\sup_{u\in C}|N_i (u)|_i \leq R_i,\ \ i=1,2,...,n
\end{equation*}%
and
\begin{equation*}
\inf_{\substack{ u\in \overline{U}  \\ \fl{ u_i}_i =r_i }}%
\fl{N_i (u)}_i> r_i,\quad
i=1,2,...,n,
\end{equation*}%
then $N$ has at least one fixed point in $C\setminus \cl U.$

As a consequence, results analogous to ones obtained later in Section \ref%
{sect:appl}, can be established for systems with more than two differential
equations.
\end{rem}

\section{The system of parabolic equations}\label{sect:appl}
Let $\Omega\subset \R^m$ be a bounded domain that is Dirichlet regular.
This class of domains is rather large; for example, if the boundary of $\Omega$ is Lipschitz continuous, then $\Omega$ is Dirichlet regular (see \cite[Chapter II, Section 4, Proposition 4]{b:DL}).

Let us take
\[E=C_0(\Omega)=\set{u\in C(\cl{\Omega})}{u|\partial\Omega=0},\ E_+=\set{u\in E}{u(x)\geq 0\text{ for all }x\in\Omega}.\]
Let us also consider the space $\E=C(0,\tm,E)$, $\tm>0$ and its cone of nonnegative functions $\E_+=C(0,\tm,E_+)$. The spaces $E$ and $\E$ are endowed with the uniform norms, that is
\[|u|=\max\set{|u(x)|}{x\in \cl\Omega},\ u\in E, \] and
\[|u|=\max\set{|u(t)|}{0\leq t\leq \tm},\ u\in \E.\]

Let $D\subset\subset  \Omega$ be any open non-empty subset. Put
\[{G} = \set{u\in \E}{u(0,x)\geq 0\ \text{ for all }x\in D}.\]
The set $G$ is a wedge generating the semiorder  $\preceq$. By $\leq$ we denote the order induced by the cone $\E_+$. The symbol $\leq$ will also be used to denote  the order on $E$ induced by $E_+$ and the natural order on $H=L^2(\Omega)$ (that is $u\leq v$ if $u(x)\leq v(x)$ for a.a. $x\in\Omega$).

Given a function $u\in H$ we set
$$
\fl u=\einf_{x\in D}|u(x)|,
$$
(in particular
$\fl u=\inf_{x\in D} |u(x)|$ for $u\in E$) and futhermore, with abuse of notation, by the same symbol we denote the value $\fl u=\fl{u(0)}$ for $u\in\E$.

The following monotonicity and continuity conditions of the functional $\fl\cdot$ are satisfied:
\[\fl u\leq \fl v\text{ if }u,v\in\E_+\text{ and }u\preceq v\] and
\[|\fl u-\fl v|\leq |u-v|\text{ for all }u,v\in\E \quad\left(\text{therefore} \fl u\leq |u|\text{ for all }u\in\E\right).\]

Consider the function $\psi(t)=\varphi$ for all $t\in[0,\tm ]$, where $\varphi\in E$ satisfies the following conditions: $\varphi|D\equiv 1$ and $0\leq \varphi\leq 1$ in $\Omega$. Then $|\psi|=1$ and $u\preceq|u|\psi$ for every $u\in\E_+$.

Let us consider the Laplace operator $\Delta$ defined on the domain $D(\Delta) =\set{u\in E}{\Delta u\in E}$. We briefly recall some known facts regarding the Laplace operator and the semigroup generated by it.

\begin{lem} The operator $\Delta$ is a generator of an analytic (immediately) compact $C_0$-semigroup of contractions $\set{S(t)}{t\geq 0}$ on $E$. Moreover, the operators $S(t)$ are positive, i.e. $S(t)u\geq 0$ for $u\geq 0$.
\end{lem}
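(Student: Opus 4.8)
The plan is to verify that the Dirichlet Laplacian $\Delta$ on the space $E=C_0(\Omega)$ of continuous functions vanishing on the boundary generates the desired semigroup. I would organize the argument around the standard theory of the Dirichlet Laplacian on spaces of continuous functions, as developed for example in the monograph \cite{b:ABHN}, rather than reproving everything from scratch. The Dirichlet regularity of $\Omega$ is precisely the hypothesis one needs to make the generation theory work on $C_0(\Omega)$ rather than only on $L^2(\Omega)$.

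\emph{Generation and contractivity.} First I would check that $\Delta$, with domain $D(\Delta)=\{u\in E:\Delta u\in E\}$ (the distributional Laplacian staying in $E$), is densely defined and dissipative. Dissipativity follows from the maximum principle: if $u\in D(\Delta)$ attains its maximum at an interior point $x_0$ with $u(x_0)>0$, then $\Delta u(x_0)\leq 0$, and since $u$ vanishes on $\partial\Omega$ the maximum is interior whenever $u\neq 0$; this yields the resolvent estimate $|(\lambda-\Delta)u|\geq\lambda|u|$ for $\lambda>0$, hence contractivity. The Dirichlet regularity is then invoked to show the range condition $\mathrm{Range}(\lambda-\Delta)=E$ for some (hence all) $\lambda>0$: solving $\lambda u-\Delta u=h$ with $u\in C_0(\Omega)$ uses solvability of the corresponding Dirichlet problem, which is exactly what Dirichlet regularity guarantees. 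By the Lumer--Phillips theorem this gives a $C_0$-semigroup of contractions. Here I expect to cite the relevant results of \cite{b:ABHN} directly.

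\emph{Analyticity and compactness.} For analyticity I would note that the Dirichlet Laplacian generates a bounded analytic semigroup; on $C_0(\Omega)$ this again follows from the maximum-principle-based resolvent bounds in a sector, or from the corresponding known result in \cite{b:ABHN}. For immediate compactness I would use that for $t>0$ the semigroup maps into a space with better regularity and that the embedding into $C_0(\Omega)$ is compact; concretely, $S(t)$ factors through the $L^2$ (or interpolation) realization where the Gaussian-type kernel gives smoothing, and Arzel\`a--Ascoli (via interior Schauder/elliptic estimates together with equicontinuity up to the boundary supplied by Dirichlet regularity) yields relative compactness of $S(t)(\text{bounded set})$ in $E$.

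\emph{Positivity.} Finally, positivity $S(t)u\geq 0$ for $u\geq 0$ follows from the maximum principle at the level of the resolvent: for $\lambda>0$ the operator $(\lambda-\Delta)^{-1}$ is positive (if $h\geq 0$ then the solution of $\lambda u-\Delta u=h$ is $\geq 0$ by the minimum principle), and positivity of the resolvent for all $\lambda>0$ is equivalent, via the exponential formula $S(t)u=\lim_{n\to\infty}\bigl(\tfrac{n}{t}\bigr)^n\bigl(\tfrac{n}{t}-\Delta\bigr)^{-n}u$, to positivity of the semigroup. The main obstacle I anticipate is not any single computation but the careful handling of the domain and the range condition on $C_0(\Omega)$: establishing surjectivity of $\lambda-\Delta$ on the continuous-function space is exactly where Dirichlet regularity enters, and it is the one place where one cannot simply transplant the $L^2$ theory. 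I would therefore lean on the precise statements in \cite[Chapter 6]{b:ABHN} for this step.
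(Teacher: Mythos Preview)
Your outline is correct, and for generation, contractivity, analyticity, and positivity it coincides with the paper's proof: both simply invoke the relevant results from Chapter~6 of \cite{b:ABHN} (Theorems~6.1.8 and~6.1.9 there), and the maximum-principle and Lumer--Phillips ideas you sketch are exactly what underlies those theorems.

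The one place where your route genuinely differs is the compactness argument. You propose to show \emph{directly} that $S(t)$ is compact for $t>0$ via smoothing and Arzel\`a--Ascoli, with the delicate step being equicontinuity up to $\partial\Omega$. The paper instead proves that the \emph{resolvent} is compact---equivalently, that the embedding $(D(\Delta),\|\cdot\|_\Delta)\hookrightarrow E$ is compact---and then appeals to general semigroup theory (Theorems~4.25 and~4.29 of \cite{b:EnNag}): an analytic semigroup with compact resolvent is immediately compact. To obtain compactness of the embedding, the paper uses that $D(\Delta)\subset E\cap C^1(\Omega)$ (from \cite[Lemma~6.1.5]{b:ABHN}), applies the Closed Graph Theorem to get continuity of $E_\Delta\hookrightarrow E\cap C^1(\Omega)$ for a suitable Fr\'echet topology, and then uses compactness of the latter space into $E$. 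The paper's route has the advantage of sidestepping the boundary-equicontinuity issue entirely by working at the resolvent level; your direct approach is viable too, but the step ``equicontinuity up to the boundary supplied by Dirichlet regularity'' would require a barrier-function argument to make precise, which is exactly the kind of work the paper's argument avoids.
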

\begin{proof} See~\cite[Section 6.1]{b:ABHN}.
\end{proof}

We shall also consider the space $H=L^2(\Omega)$ and the Laplacian $\Delta_2$ on $H$ with Dirichlet boundary condition. Denote by $S_2\colon [0,\infty)\to B(H)$ the semigroup generated by $\Delta_2$ and by $i\colon E\to H$ the natural embedding.
\begin{prop}\label{prop:iS=S2i} $i(S(t)u)=S_2(t)i(u)$ for all $u\in E$.
\end{prop}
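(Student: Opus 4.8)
The plan is to prove the identity first for $u$ in the dense subspace $D(\Delta)$ and then to extend it to all of $E$ by continuity. The heart of the matter is a \emph{consistency} statement between the two realizations of the Dirichlet Laplacian: I claim that $i$ intertwines the generators, in the sense that $i\bigl(D(\Delta)\bigr)\subset D(\Delta_2)$ and
\[\Delta_2\, i(u)=i(\Delta u)\quad\text{for all }u\in D(\Delta).\]
Once this is available, the equality of the two semigroup orbits follows from the uniqueness of classical solutions of the abstract Cauchy problem associated with $\Delta_2$. (Equivalently, one could phrase the whole argument at the level of resolvents, proving $i\,R(\lambda,\Delta)=R(\lambda,\Delta_2)\,i$ and invoking the exponential formula; this needs the same consistency input and I therefore proceed with the generator version.)

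To establish the intertwining relation, take $u\in D(\Delta)$, so that $u\in C_0(\Omega)$ and its distributional Laplacian $\Delta u$ again lies in $C_0(\Omega)$. Since $\Omega$ is bounded, both $i(u)$ and $i(\Delta u)$ belong to $H=L^2(\Omega)$, and the distributional identity defining $\Delta u$ in $E$ is the \emph{same} distributional identity read in $L^2(\Omega)$; thus $\Delta i(u)=i(\Delta u)$ as distributions, and the right-hand side is in $L^2$. The only genuinely delicate point is to check that $i(u)$ satisfies the Dirichlet boundary condition encoded in $D(\Delta_2)$, i.e. that a continuous function vanishing on $\partial\Omega$ with $L^2$ distributional Laplacian lies in the form domain of $\Delta_2$. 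This is exactly where the Dirichlet regularity of $\Omega$ and the description of $D(\Delta)$ from \cite[Lemma 6.1.5]{b:ABHN} (already used in the preceding lemma) enter, and I expect this boundary-regularity verification to be the main obstacle; granting it, $i(u)\in D(\Delta_2)$ and $\Delta_2 i(u)=i(\Delta u)$.

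With the intertwining relation in hand, fix $u\in D(\Delta)$ and consider the $H$-valued function $\varphi(t):=i\bigl(S(t)u\bigr)-S_2(t)i(u)$. Since $S(t)u\in D(\Delta)$ for every $t$ and $i$ is bounded and linear, $t\mapsto i(S(t)u)$ is differentiable in $H$ with derivative $i(\Delta S(t)u)=\Delta_2\, i(S(t)u)$, using the intertwining relation at $S(t)u$; likewise $t\mapsto S_2(t)i(u)$ is differentiable with derivative $\Delta_2 S_2(t)i(u)$ because $i(u)\in D(\Delta_2)$. Hence $\varphi$ takes values in $D(\Delta_2)$ and solves $\varphi'(t)=\Delta_2\varphi(t)$ with $\varphi(0)=0$. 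By uniqueness of solutions of the abstract Cauchy problem for the generator $\Delta_2$ we conclude $\varphi\equiv 0$, that is $i(S(t)u)=S_2(t)i(u)$ for all $u\in D(\Delta)$.

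Finally, I extend the identity to arbitrary $u\in E$. The embedding $i\colon E\to H$ is bounded, since $|i(u)|_{L^2}\leq |\Omega|^{1/2}|u|$ for $u\in E$, and $D(\Delta)$ is dense in $E$ because $\Delta$ generates a $C_0$-semigroup on $E$ by the previous lemma. As $S(t)$ and $S_2(t)$ are bounded operators and $i$ is continuous, both sides of $i(S(t)u)=S_2(t)i(u)$ depend continuously on $u\in E$, so the identity, being valid on the dense set $D(\Delta)$, holds for every $u\in E$, which completes the argument.
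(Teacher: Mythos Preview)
Your argument is correct and rests on the same key input as the paper's proof: the consistency relation $i\bigl(D(\Delta)\bigr)\subset D(\Delta_2)$ with $\Delta_2\,i=i\,\Delta$ on $D(\Delta)$ (the paper phrases this as ``$\Delta$ is the part of $\Delta_2$ in $E$'', citing \cite[II.2.3]{b:EnNag}, and simply records $D(\Delta_2)=\{u\in H:\Delta u\in H\}$ without dwelling on the boundary issue you flag). Where you diverge is in how you pass from generators to semigroups: you solve the abstract Cauchy problem for $\Delta_2$ and invoke uniqueness on $D(\Delta)$, then extend by density, whereas the paper pulls the identity through resolvents, obtaining $R(\lambda,\Delta_2)\,i=i\,R(\lambda,\Delta)$, and then applies the Post--Widder inversion formula to recover the semigroup. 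Both routes are standard and short; the resolvent/Post--Widder path has the minor advantage that it yields the identity for all $u\in E$ in one stroke (no separate density step), while your Cauchy-problem route is perhaps more transparent and avoids the somewhat heavier Post--Widder machinery. You even mention the resolvent alternative yourself, so there is no conceptual gap---only a different choice of standard tool.
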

\begin{proof} The proof uses the Post-Widder inversion formula for $C_0$-semigroups (see Corollary 3.3.6 in \cite{b:ABHN}).
\end{proof}

\begin{defin}\label{def:mild-solution}
For $\xi\in E$, and $f\in \E$, we say that the function \[u(t)=p(\xi,f)(t):=S(t)\xi+\int_0^t S(t-s)f(s)ds\] is a {\em mild solution} of the problem $u'-\Delta u=f$ on $(0,\tm)\times\Omega$ with $u(0)=\xi$.
\end{defin}

It is worth pointing out the following regularity result of the mild solutions, this can be proved using standard techniques, see for example \cite[Theorem 8.2.1]{b:Vrabie}.

\begin{prop}\label{prop:strong sol}
Let $u=p(\xi,f)$ for $\xi\in E$, $f\in\E$, that is $u$ is a mild solution of the problem $u'-\Delta u=f$, $u(0)=\xi$. Then
\begin{enumerate}[\upshape (a)]
\item $u$ is a strong solution of that problem in the space $L^2(\Omega)$. Precisely, $u\colon [0,\tm ]\to L^2(\Omega)$ is absolutely continuous, $u'\in L^1(0,\tm,L^2(\Omega))$, $u(t)\in D(\Delta_2)$ and $u'(t)=\Delta_2 u(t)+f(t)$ for almost all $t\in(0,\tm )$.
\item $u$ is a weak solution of the equation $u'-\Delta_2u=f$ in the sense that the weak spatial derivative $\nabla u(t,x)$ and weak time derivative $u_t(t,x)$ exist on $(0,\tm)\times\Omega$ and

\begin{equation}\label{eq:weak solution}
\int_0^{t_0}\int_\Omega\nabla u(t,x)\nabla \phi(t,x)-u\frac{\partial}{\partial t}\phi(t,x)dxdt=\int_0^{t_0}\int_\Omega f(t)\phi(t,x)dt
\end{equation}
for all $\phi\in C^\infty_0((0,\tm )\times\Omega)$.

\end{enumerate}
\end{prop}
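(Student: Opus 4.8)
The plan is to transfer the problem to the Hilbert space $H=L^2(\Omega)$, where the generator $\Delta_2$ is self-adjoint, solve it there, and then read off both conclusions. First I would set $w:=i\circ u\colon[0,\tm]\to H$ and $g:=i\circ f\in C(0,\tm,H)$. Since $i\colon E\to H$ is linear and continuous it commutes with the Bochner integral, and Proposition~\ref{prop:iS=S2i} gives $i(S(t)\eta)=S_2(t)i(\eta)$; hence
\[w(t)=S_2(t)i(\xi)+\int_0^t S_2(t-s)g(s)\,ds,\]
so that $w$ is precisely the mild solution in $H$ of $w'=\Delta_2 w+g$ with $w(0)=i(\xi)$. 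Throughout I would use that $D(\Delta_2)\subset H^1_0(\Omega)$ and that, for $\eta\in D(\Delta_2)$ and $\varphi\in C_0^\infty(\Omega)$, Green's formula yields $\langle\Delta_2\eta,\varphi\rangle_H=-\int_\Omega\nabla\eta\cdot\nabla\varphi\,dx$.

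For part (a) I would split $w=w_h+w_i$ with $w_h(t)=S_2(t)i(\xi)$ and $w_i(t)=\int_0^tS_2(t-s)g(s)\,ds$. The inhomogeneous part is the easy one: because $\Delta_2$ is self-adjoint and therefore generates a bounded analytic semigroup on the Hilbert space $H$, the maximal $L^2$-regularity available in this setting applies to $w_i$, whose initial value is $0$. Since $g\in C(0,\tm,H)\subset L^2(0,\tm,H)$, one obtains $w_i\in W^{1,2}(0,\tm,H)\cap L^2(0,\tm,D(\Delta_2))$, whence $w_i$ is absolutely continuous, $w_i(t)\in D(\Delta_2)$ for a.e.\ $t$, and $w_i'(t)=\Delta_2 w_i(t)+g(t)$ for a.e.\ $t$. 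For the homogeneous part, analyticity of $S_2$ yields $w_h(t)\in D(\Delta_2)$ for every $t>0$, with $w_h\in C^1((0,\tm],H)$ and $w_h'(t)=\Delta_2 w_h(t)$.

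The main obstacle is the behaviour of $w_h$ at $t=0$. Analyticity furnishes only the estimate $\|\Delta_2 S_2(t)i(\xi)\|_H\le C\,|\xi|/t$, which fails to be integrable near $0$; indeed, for a general initial datum in $L^2$ the curve $t\mapsto S_2(t)i(\xi)$ need not be of bounded variation up to $0$, so the conclusion is genuinely false without additional information on $\xi$. The crux of the argument is therefore to exploit that $\xi\in C_0(\Omega)$ — that $\xi$ is continuous and vanishes on $\partial\Omega$, not merely square-integrable — in order to establish the Bochner integrability
\[\int_0^{\tm}\|\Delta_2 S_2(t)i(\xi)\|_H\,dt<\infty,\]
equivalently that $i(\xi)$ belongs to the real interpolation space $(H,D(\Delta_2))_{0,1}$. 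Granting this, $w_h$ is absolutely continuous on $[0,\tm]$ with $L^1$ derivative, and combining it with the inhomogeneous part shows that $w=w_h+w_i$ is a strong solution in the sense claimed, which proves (a).

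Finally, part (b) follows from (a) by testing and integrating by parts. Fixing $\phi\in C_0^\infty((0,\tm)\times\Omega)$, I would pair the pointwise identity $w'(t)=\Delta_2 w(t)+f(t)$ with $\phi(t,\cdot)$ in $H$ for a.e.\ $t$, use $w(t)\in D(\Delta_2)$ together with Green's formula to replace $\langle\Delta_2 w(t),\phi(t)\rangle_H$ by $-\int_\Omega\nabla u(t,x)\cdot\nabla\phi(t,x)\,dx$, and then integrate over $t\in(0,t_0)$. An integration by parts in time moves $\partial_t$ onto $\phi$; the boundary terms vanish because $\phi$ has compact support in $(0,\tm)$, and one arrives exactly at \eqref{eq:weak solution}. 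The existence of the weak spatial gradient $\nabla u(t,\cdot)\in L^2(\Omega)^m$ for a.e.\ $t$ is guaranteed by $u(t)\in D(\Delta_2)\subset H^1_0(\Omega)$, while the weak time derivative is provided by $u'\in L^1(0,\tm,H)$ from part (a).
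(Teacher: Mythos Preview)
Your transfer of the problem to $H=L^2(\Omega)$ via Proposition~\ref{prop:iS=S2i}, and your derivation of part~(b) from part~(a) by pairing with $\phi(t,\cdot)$, integrating by parts in space (Green's formula) and in time, match the paper's argument closely.

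The divergence, and the gap, are both in part~(a). The paper does not decompose $w$ into homogeneous and inhomogeneous pieces and does not invoke maximal $L^2$-regularity or interpolation spaces. Once $w=i\circ u$ is recognised as the $H$-mild solution, the paper simply cites Theorem~8.2.1 of Vrabie's \emph{$C_0$-semigroups and applications}, a general result to the effect that in a Hilbert space, with a self-adjoint dissipative generator, mild solutions are strong. The operative input is the Hilbert-space structure (underlying the energy identity $\tfrac12\tfrac{d}{dt}\|w\|^2=\langle\Delta_2 w,w\rangle+\langle g,w\rangle$, which is also what the paper quotes from the same theorem in the proof of~(b)), not any extra regularity of the initial datum; nothing about $\xi$ beyond $i(\xi)\in H$ is used.

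Your argument, by contrast, makes the absolute continuity of the homogeneous part $S_2(\cdot)i(\xi)$ up to $t=0$ hinge on the unproved claim that $\xi\in C_0(\Omega)$ forces $i(\xi)$ into the class characterised by $\int_0^{\tm}\|\Delta_2 S_2(t)i(\xi)\|_H\,dt<\infty$. You write ``Granting this'' and move on, but this is precisely the step that needs justification, and there is no evident reason it should hold: functions in $C_0(\Omega)$ carry no Sobolev regularity in general, so there is no embedding of $C_0(\Omega)$ into the relevant interpolation space between $L^2$ and $D(\Delta_2)$ to appeal to. You have correctly located the obstruction on your chosen route but not removed it, and your diagnosis that the continuity of $\xi$ (as opposed to mere square-integrability) is ``the crux'' is not how the paper proceeds. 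The paper's route sidesteps the issue entirely by invoking the Hilbert-space theorem, where self-adjointness of $\Delta_2$, rather than analyticity of $S_2$ or regularity of $\xi$, does the work.
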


We make use of the following result.
\begin{prop}\label{prop:properties u}
Assume that $u_0\in E$ is a nonnegative nonzero function. Define $u(t,x)=(S(t)u_0)(x)$. Then
\begin{enumerate}[\upshape (i)]
	\item $u\in C^\infty((0,\infty)\times\Omega)\cap C(\R_+\times \cl\Omega)$
	\item $u_t=\Delta u$ on $(0,\infty)\times\Omega$
	\item $u(0,x)=u_0(x)$ for $x\in\Omega$
	\item $u(t,x)>0$  for $t>0$ and $x\in\Omega$.
\end{enumerate}
\end{prop}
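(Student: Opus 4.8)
The plan is to read each of the four properties off the structure of $u(t)=S(t)u_0$ as the orbit of the analytic, positive $C_0$-semigroup generated by $\Delta$, and then to upgrade the abstract (semigroup-theoretic) regularity to classical regularity by invoking hypoellipticity of the heat operator together with the parabolic strong maximum principle.

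First I would dispose of the easy items. Property (iii) is immediate, since $S(0)=\mathrm{Id}$ gives $u(0,x)=(S(0)u_0)(x)=u_0(x)$. For the continuity asserted in (i), I would use that strong continuity of the semigroup makes $t\mapsto S(t)u_0$ continuous from $\R_+$ into $E=C_0(\Omega)$ equipped with the uniform norm; since uniform convergence in $x$ preserves joint continuity, the map $(t,x)\mapsto u(t,x)$ is continuous on $\R_+\times\cl\Omega$, and $u$ vanishes on $\partial\Omega$ because $S(t)u_0\in C_0(\Omega)$ for every $t$.

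Next, for the smoothness in (i) and for (ii), I would exploit analyticity of $S$. Analyticity yields that $t\mapsto S(t)u_0$ is $C^\infty$ (indeed real-analytic) from $(0,\infty)$ into $E$, that $S(t)u_0\in\bigcap_{k\geq 1}D(\Delta^k)$, and that $\frac{d}{dt}S(t)u_0=\Delta S(t)u_0$ in $E$. By Proposition~\ref{prop:strong sol}(b), applied with $f\equiv 0$ and $\xi=u_0$ (so that $p(u_0,0)(t)=S(t)u_0=u(t)$), the function $u$ is a weak solution of $u_t-\Delta u=0$ on $(0,\tm)\times\Omega$, the argument on each finite time-interval being identical. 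Since the heat operator $\partial_t-\Delta$ is hypoelliptic, every distributional solution is smooth; hence $u\in C^\infty((0,\infty)\times\Omega)$, which completes (i), and the equation $u_t=\Delta u$ then holds in the classical pointwise sense, giving (ii). Equivalently, interior elliptic regularity applied to $S(t)u_0\in D(\Delta^k)$ for all $k$ yields spatial smoothness, and the abstract identity $\frac{d}{dt}S(t)u_0=\Delta S(t)u_0$ identifies the time derivative.

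The main obstacle is the strict positivity (iv). Positivity of the semigroup gives $u\geq 0$ everywhere, but upgrading this to $u(t,x)>0$ for $t>0$, $x\in\Omega$ requires the parabolic strong maximum principle, for which the classical regularity from (i)--(ii) is exactly what is needed. Here I would argue by contradiction: $u$ is a nonnegative classical solution of the heat equation on the connected cylinder $(0,\infty)\times\Omega$, so if $u(t_0,x_0)=0$ for some $t_0>0$ and $x_0\in\Omega$, the strong minimum principle forces $u\equiv 0$ on $\Omega\times(0,t_0]$; letting $t\to 0^+$ and using the continuity from (i) together with property (iii) would yield $u_0\equiv 0$, contradicting the hypothesis that $u_0$ is nonzero. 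Hence $u(t,x)>0$ throughout $(0,\infty)\times\Omega$. The one point demanding care is the appeal to connectedness of $\Omega$ (a \emph{domain}), without which positivity could fail on a component on which $u_0$ vanishes.
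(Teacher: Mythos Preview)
Your argument is correct, but the route differs from the paper's in two places.

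For (i)--(iii) the paper simply cites a standard result (Arendt--Batty--Hieber--Neubrander), whereas you unpack the argument via analyticity and hypoellipticity of $\partial_t-\Delta$; your approach is more self-contained and makes transparent why the distributional identity from Proposition~\ref{prop:strong sol}(b) upgrades to classical regularity.

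For (iv) the paper argues locally: given $x\in\Omega$ and $t>0$, it picks a connected $V\subset\subset\Omega$ containing both $x$ and some $x_0$ with $u_0(x_0)>0$, observes that $\sup_V u(\varepsilon,\cdot)>0$ for small $\varepsilon$, and applies the \emph{parabolic Harnack inequality} $\sup_V u(t_1,\cdot)\leq C\inf_V u(t_2,\cdot)$ (for $t_1<t_2$) to conclude $u(t,x)\geq\inf_V u(t,\cdot)>0$. You instead invoke the parabolic strong minimum principle globally on $(0,\infty)\times\Omega$. Both arguments are valid and both ultimately rely on connectedness of $\Omega$ (the paper needs it to place $x$ and $x_0$ in a common connected $V$). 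The Harnack route is more in keeping with the paper's overall theme and yields quantitative lower bounds that feed into later estimates, while your maximum-principle route is shorter and perhaps more familiar; either is acceptable here since only strict positivity is claimed.
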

\begin{proof}
The conclusions (i)-(iii) follow from Proposition 2.6 of \cite{b:ABHN}, while (iv) follows from the parabolic Harnack inequality, see for example Theorem 7.1.10 of \cite{b:Evans}.
\end{proof}
\begin{rem}\label{rem:choiceC0}%\label{rem:nopsi} 
It seems worth discussing the choice of the space $C_0(\Omega)$. In the recent paper \cite{b:gi-mm-rp}, where an elliptic system was discussed, the space $L^\infty(\Omega)$ was considered. Unfortunately, the Laplacian $\Delta$ fails to generate a $C_0$-semigroup on $L^\infty(\Omega)$. Moreover, although $\Delta$ generates semigroups on $L^p(\Omega)$, these spaces are somewhat inappropriate to obtain the localization of solutions with our approach. 
Note also that in the space $\E=C(0,\tm,C_0(\Omega))$ there is no element $\bar\psi$ that  $|\bar\psi|=1$ and $u\leq|u|\bar\psi$ for every $u\in\E_+$. This fact prevented us from using the abstract setting from \cite{b:gi-mm-rp} and is the main reason for considering the wedges $G_i$, $i\in\{1,2\}$, and the semiorders $\preceq$ in Section \ref{sect:abstr}.
\end{rem}
We make use of the following Lemma.
\begin{lem}\label{lem:m_eta>0} Let $0\neq \eta\in E=C_0(\Omega)$ and let $\mu$ be a Borel measure on $\R$.
\begin{enumerate}[\upshape (a)]
	\item  If $0<t_0<t_1$, then we have
	 	\[m_\eta(t_0,t_1):=\min\set{(S(\tau)\eta)(x)}{x\in \cl D,\ \tau\in[t_0,t_1]}>0.\]
	 \item If $0\leq t_0<t_1$ and $\mu([t_0,t_1])>0$, then we have
	$\fl{\int_{t_0}^{t_1}S(\tau)\eta d\mu(\tau)}>0$.
\end{enumerate}
\end{lem}
\begin{proof}
(a) The conclusion follows from Proposition \ref{prop:properties u} 
and compactness of $[t_0,t_1]\times \cl D$.

(b) We can assume that $t_0>0$. Then
\[\int_{t_0}^{t_1}(S(\tau)\eta)(x) d\mu(\tau)\geq \mu([t_0,t_1])m_\eta(t_0,t_1)>0\] for $x\in D$, which is the desired conclusion.
\end{proof}

Define
\begin{equation}\label{eq:def m} m(t_0,t_1):=\einf_{x\in D,\ t\in[t_0,t_1]}\left(S_2(t)\chi_D\right)(x).\end{equation}

\begin{cor}\label{cor:m>0}
\begin{enumerate}[\upshape (a)]
	\item If $0<t_0<t_1$, then $m(t_0,t_1)>0$.
	\item If $0\leq t_0\leq t_1$ and $\mu([t_0,t_1])>0$, then
$\fl{\int_{t_0}^{t_1}S_2(\tau)\chi_Dd\mu(\tau)}>0$.
\end{enumerate}
\end{cor}
It is possible that $m(0,t_1)>0$ (see Example \ref{exa:ex1}). 
\begin{proof}
Let $\eta\in E=C_0(\Omega)$ be any nonzero function with $0\leq \eta\leq \chi_D$.  The conclusion is implied by Lemma \ref{lem:m_eta>0} and the inequality
$S_2(t)\chi_D\geq S_2(t)\eta= S(t)\eta,$ which follows from the positiveness of $S_2$ and Proposition \ref{prop:iS=S2i}.
\end{proof}
Define the cones
\[K_1=K_2=\set{u\in \E_+}{u(t)\geq S(t)u(0) \ \text{for all}\ t\in[0,\tm]}.\]

Let us fix $0\leq t_0<t_1\leq\tm$ satisfying $\mu_1([t_0,t_1])>0$, $ \mu_1([t_0,t_1])>0$, where $\mu_i$ are the measures from the definition of $\alpha$ and $\beta$. Put $m:=m(t_0,t_1)$.

\begin{rem}\label{rem:Harnack}
Take $u\in K$ and set $r:=\fl u=\inf_{x\in D}u(0,x)$. Observe that
$u(0)\geq r\chi_D$ and therefore we have
\[u(t)\geq S(t)u(0)=S_2(t)u(0)\geq rS_2(t)\chi_D\geq rm\chi_D\ \text{ for }t\in[t_0,t_1].\]
As a consequence, we obtain the estimate
\begin{equation}\label{eq:Harnack}
u\geq m\fl u\chi_{[t_0,t_1]\times D}\ \text{ for all }u\in K,
\end{equation}
which can be called \textit{weak Harnack-type inequality}, a counterpart of the inequality (3.4) of \cite{b:gi-mm-rp}.
\end{rem}
We now turn back our attention to the parabolic system
\begin{equation}\label{eq:parabolic}
\begin{cases}
u_t-\Delta u = f(t,x,u,v) & (t,x)\in Q:=(0,\tm)\times\Omega,\\
v_t-\Delta v = g(t,x,u,v) & (t,x)\in Q:=(0,\tm)\times\Omega,\\
u(t,x)=v(t,x)=0 & (t,x)\in (0,\tm)\times\partial\Omega,\\
u(0,\cdot)=\alpha(u,v),\\
v(0,\cdot)=\beta(u,v).
\end{cases}
\end{equation}

Here,  $f,g\colon (0,\tm)\times\Omega\times\R_+\times\R_+\to\R_+$ and $\alpha,\beta\colon\E\times\E\to E$  are continuous functions. In what follows we shall  identify $u\colon[0,\tm]\times\cl\Omega\to\R$ with $u\colon [0,\tm ]\to C(\cl\Omega)$ via the formula $u(t)(x)=u(t,x)$.

Define \[F(u,v)(t)(x)=f(t,x,u(t)(x),v(t)(x)),\quad G(u,v)(t)(x)=g(t,x,u(t)(x),v(t)(x))).\] Under the following assumption:
\begin{equation}\label{eq:assumpt fg}
f(t,x,0,0)=g(t,x,0,0)=0\ \text{ for }x\in\partial\Omega,
\end{equation}
the operators $F,G\colon\E\times\E\to\E$ are continuous and bounded (map bounded sets into bounded ones).

Therefore, the mild solution of the problem \eqref{eq:parabolic} is a fixed point of the vector valued operator $M$ defined as
\[M_1(u,v)=\bar S(\alpha(u,v))+\hat S(F(u,v)),\ M_2(u,v)=\bar S(\beta(u,v))+\hat S(G(u,v)),\]
where
\[\bar S\colon E\to \E,\ \bar S(u_0)(t)=S(t)u_0,\ u_0\in E,\]
\[\hat S\colon \E\to\E,\ \hat S(f)(t)=\int_0^tS(t-\tau)f(\tau)d\tau, f\in\E.\]

\begin{prop}\label{prop:M-compact} The operators $M_1,M_2$ are completely continuous if and only if $\alpha$ and $\beta$ are completely continuous.
\end{prop}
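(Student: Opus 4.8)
The plan is to exploit the additive splitting $M_1=\bar S\circ\alpha+\hat S\circ F$ (and likewise $M_2=\bar S\circ\beta+\hat S\circ G$), separating the term carrying $\alpha$ from the term governed by the semigroup convolution. The whole equivalence becomes almost formal once two facts are in place: first, that the convolution term $\hat S\circ F$ is \emph{always} completely continuous, irrespective of $\alpha$; and second, that $\bar S$ possesses a bounded linear left inverse, so that composing with $\bar S$ neither creates nor destroys complete continuity.

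First I would prove the key lemma that $\hat S\colon\E\to\E$ sends bounded sets to relatively compact sets. Fix a bounded $B\subset\E$ and put $c=\sup_{f\in B}|f|$. Relative compactness of $\{\hat S(f):f\in B\}$ in $\E=C(0,\tm,E)$ will follow from Arzel\`a--Ascoli once two points are checked. For pointwise relative compactness of $\{\hat S(f)(t):f\in B\}$ in $E$, I would use the splitting $\hat S(f)(t)=\int_0^{t-\varepsilon}S(t-\tau)f(\tau)\,d\tau+\int_{t-\varepsilon}^{t}S(t-\tau)f(\tau)\,d\tau$: the second integral has norm at most $\varepsilon c$ since $S$ is contractive, while the first equals $S(\varepsilon)\int_0^{t-\varepsilon}S(t-\varepsilon-\tau)f(\tau)\,d\tau$ and hence lies in the image, under the \emph{compact} operator $S(\varepsilon)$ furnished by the Lemma on $\Delta$, of a bounded set. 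Letting $\varepsilon\to0$ yields total boundedness. For equicontinuity in $t$ (say $t'>t$), I would again peel off an interval of length $\varepsilon$ near the endpoint and, on the remaining interval, use the identity $S(t'-\tau)-S(t-\tau)=(S(t'-t)-I)S(\varepsilon)S(t-\varepsilon-\tau)$; the operator $(S(t'-t)-I)S(\varepsilon)$ does not depend on $t$ and tends to $0$ in norm as $t'\to t$, because $S(\varepsilon)$ is compact and $S(s)\to I$ strongly as $s\to0^+$. This gives an estimate uniform in $t$ and over $B$. Since $F$ is continuous and maps bounded sets into bounded sets, $\hat S\circ F$ is then continuous and sends bounded sets to relatively compact sets, i.e.\ it is completely continuous; the same holds for $\hat S\circ G$.

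Next I would record the elementary but decisive observation that the evaluation $\mathrm{ev}_0\colon\E\to E$, $g\mapsto g(0)$, is bounded and linear with $|\mathrm{ev}_0(g)|\leq|g|$, and satisfies $\mathrm{ev}_0\circ\bar S=\mathrm{id}_E$, because $\bar S(u_0)(0)=S(0)u_0=u_0$. Thus $\bar S$ is a topological embedding admitting a continuous linear left inverse. Consequently $\bar S$ \emph{preserves} complete continuity, as a continuous linear map carries relatively compact sets to relatively compact sets, while $\mathrm{ev}_0$ \emph{reflects} it: from $\alpha=\mathrm{ev}_0\circ(\bar S\circ\alpha)$ one reads off that $\alpha$ is completely continuous whenever $\bar S\circ\alpha$ is.

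Finally I would assemble the equivalence. If $\alpha$ is completely continuous, then $\bar S\circ\alpha$ is completely continuous by the preceding paragraph, and adding the completely continuous term $\hat S\circ F$ makes $M_1$ completely continuous. Conversely, if $M_1$ is completely continuous, then $\bar S\circ\alpha=M_1-\hat S\circ F$ is a difference of completely continuous operators, hence completely continuous, and applying $\mathrm{ev}_0$ shows $\alpha$ is completely continuous. The identical argument with $\beta$ and $G$ settles $M_2$. The only genuinely technical step is the compactness lemma for $\hat S$, and within it the equicontinuity estimate relying on the immediate compactness of the semigroup; everything else is bookkeeping with continuous linear maps and the left inverse $\mathrm{ev}_0$.
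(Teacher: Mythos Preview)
Your proposal is correct and follows essentially the same architecture as the paper's proof: both establish the complete continuity of $\hat S$ via an Ascoli--Arzel\`a argument with the identical ``peel off an $\varepsilon$-strip and use compactness of $S(\varepsilon)$'' splitting for pointwise relative compactness, and both handle the ``only if'' direction through the evaluation map $e_0$ at $t=0$.

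There are two minor tactical differences worth noting. For equicontinuity, the paper invokes the \emph{immediate norm continuity} of $S$ (available because the semigroup is analytic) to bound $\|S(s-t+\tau)-S(\tau)\|_{B(E)}$ directly for $\tau\geq\varepsilon$; you instead deduce $\|(S(h)-I)S(\varepsilon)\|\to0$ from compactness of $S(\varepsilon)$ combined with strong continuity at $0$. Both arguments are standard and yield the same uniform estimate. For the ``only if'' direction, the paper observes in one line that $e_0\circ M_1=\alpha$ (since $\hat S(F(u,v))(0)=0$), whereas you take the slightly longer route of subtracting the completely continuous term $\hat S\circ F$ and then applying $\mathrm{ev}_0$; your route is valid but the paper's shortcut is worth noting.
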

\begin{proof}
If $M_1$, $M_2$ are completely continuous, then $e_0\circ M_1=\alpha, e_0\circ M_2=\beta$ are completely continuous, where $e_0(f)=f(0)$ for $f\in\E$.

Now, assume that the operators $\alpha,\beta$ are completely continuous. Since the operator $\bar S$ is continuous, then the operator $\bar S\circ \alpha$ is completely continuous. We shall demonstrate that $\hat S$ is completely continuous. In order to do this we utilize a version of the Ascoli-Arzel\`{a} Theorem tailored for the space $\E$, see for example Theorem A.2.1 of \cite{b:Vrabie}.

First, denote by $\Xi$ the upper bound of the norms of $\|S(t)\|_{B(E)}$ for $t\in[0,\tm ]$. Because $S$ is immediate norm continuous, for any $\varepsilon>0$ there exists a number $\delta(\varepsilon)>0$ such that $\|S(t)-S(s)\|_{B(E)}<\varepsilon$ if $0<\varepsilon\leq t\leq s\leq t+\delta(\varepsilon)$.

Fix $R,\varepsilon>0$ and let $D=\set{f\in \E}{|f|<R}$. For a fixed $t>0$ and $f\in D$ let $\eta=\min\{t,\varepsilon\}$. We can present $\hat S(f)(t)$ as a sum $\hat S(t)=x+y$, where
\[x=S(\eta)\int_0^{t-\eta}S(t-s-\eta)f(s)ds,\quad y=\int_{t-\eta}^tS(t-s)f(s)ds.\]
It is straightforward to show that $x\in \hat C:=S(\eta)(B(0,t\Xi R))$ and that $y\in B(0,\varepsilon \Xi R)$. Because $\hat C$ is relatively compact and $\varepsilon$ is arbitrary, we obtain the set $\set{\hat S(f)(t)}{f\in D}$ is relatively compact for all $t\in[0,\tm ]$.

Now we shall prove the equicontinuity of the family $\set{\hat S(f)}{f\in D}$. In order to do it let us fix $\varepsilon>0$, $t,s\in [0,\tm ]$ and $f\in D$. Without loss of generality we can assume that $t\leq s$. Let us put $\eta=\min\{t,\varepsilon\}$ (the case $\eta=0$ appears if $t=0$). Then $\hat S(f)(s)-\hat S(f)(t)=x+y$, where
\[x=\int_0^{t}(S(s-\tau)f(\tau)-S(t-\tau)f(\tau))d\tau,\ y=\int_t^sS(s-\tau)f(\tau)d\tau.\]
\[ |x| \leq R\int_0^t\|S(s-t+\tau)-S(\tau)\|_{B(E)}d\tau\leq 2\Xi R\varepsilon+\int_\eta^t\|S(s-t+\tau)-S(\tau)\|_{B(E)}d\tau.\]
\[ |y | \leq |s-t|\Xi R.\]
Therefore $|\hat S(f)(s)-\hat S(f)(t)|\leq (3\Xi R+\tm)\varepsilon$ if $|s-t|\leq \min\{\delta(\varepsilon),\varepsilon\}$.
This proves the uniform equicontinuity of the family $\set{\hat S(f)}{f\in D}$ and finishes the proof of the complete continuity of $\hat S$. Now, the complete continuity of $M_1=\bar S\circ \alpha+\hat S\circ F$ is clear. Similarly we can prove the complete continuity of $M_2$.
\end{proof}

In order to use fixed point index for compact operators and, at the same time, to avoid assuming the compactness of $\alpha$ and $\beta$, we consider the operator $N=(N_1,N_2)$, defined by the formula
\[N_1(u,v)=\bar S\left(\alpha\left(\bar S(u(0))+\hat S(F(u,v))\:,\: \bar S(v(0))+\hat S(G(u,v))\right)\right)+\hat S(F(u,v)),\]
\[N_2(u,v)=\bar S\left(\beta\left(\bar S(u(0))+\hat S(F(u,v))\:,\:\bar S(v(0))+\hat S(G(u,v))\right)\right)+\hat S(G(u,v)).\]

\begin{prop}\label{prop:fixed-sets} 
The sets of fixed points of the operators $M$ and $N$ coincide.
\end{prop}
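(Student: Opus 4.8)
The plan is to reduce everything to the two elementary evaluation identities $\bar S(\xi)(0)=S(0)\xi=\xi$ and $\hat S(f)(0)=0$ (the defining integral is over the empty interval $[0,0]$), valid for every $\xi\in E$ and $f\in\E$. Together they say that any mild-solution expression $\bar S(\xi)+\hat S(f)=p(\xi,f)$ evaluated at $t=0$ returns exactly $\xi$; this is the mechanism by which the nonlocal initial data is recovered from a fixed point, and it is the only analytic ingredient needed.

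First I would show that every fixed point of $M$ is a fixed point of $N$. Suppose $(u,v)=M(u,v)$. Evaluating the two components at $t=0$ and using the identities above gives $u(0)=\alpha(u,v)$ and $v(0)=\beta(u,v)$. Consequently the inner argument defining $N$, namely $\big(\bar S(u(0))+\hat S(F(u,v)),\,\bar S(v(0))+\hat S(G(u,v))\big)$, equals $\big(\bar S(\alpha(u,v))+\hat S(F(u,v)),\,\bar S(\beta(u,v))+\hat S(G(u,v))\big)=\big(M_1(u,v),M_2(u,v)\big)=(u,v)$. Substituting this into the definition of $N$ yields $N_1(u,v)=\bar S(\alpha(u,v))+\hat S(F(u,v))=M_1(u,v)=u$ and likewise $N_2(u,v)=v$, so $(u,v)$ is a fixed point of $N$.

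For the reverse inclusion I would run the same computation in a bootstrapping order. Let $(u,v)=N(u,v)$ and abbreviate the inner arguments by $\tilde u=\bar S(u(0))+\hat S(F(u,v))$ and $\tilde v=\bar S(v(0))+\hat S(G(u,v))$, so that $N_1(u,v)=\bar S(\alpha(\tilde u,\tilde v))+\hat S(F(u,v))$ and symmetrically for $N_2$. Evaluating $u=N_1(u,v)$ and $v=N_2(u,v)$ at $t=0$ gives $u(0)=\alpha(\tilde u,\tilde v)$ and $v(0)=\beta(\tilde u,\tilde v)$. Feeding these back, $u=N_1(u,v)=\bar S(\alpha(\tilde u,\tilde v))+\hat S(F(u,v))=\bar S(u(0))+\hat S(F(u,v))=\tilde u$, and similarly $v=\tilde v$; thus the inner arguments coincide with $(u,v)$ themselves. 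Therefore $N_1(u,v)=\bar S(\alpha(u,v))+\hat S(F(u,v))=M_1(u,v)$, and since $u=N_1(u,v)$ we conclude $u=M_1(u,v)$, with $v=M_2(u,v)$ obtained in the same way.

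I do not expect a genuine obstacle here, since the statement is a bookkeeping identity rather than an analytic estimate. The only point requiring care is the order of operations in the reverse inclusion: one must first extract the initial values $u(0),v(0)$ by evaluating at $t=0$ before one can identify the auxiliary arguments $\tilde u,\tilde v$ with $u,v$. Conceptually, $N$ is engineered precisely so that $\alpha$ and $\beta$ act on mild solutions, which automatically reproduce their own initial data under $\bar S+\hat S$; hence at any fixed point the auxiliary substitution is invisible and $N$ collapses to $M$.
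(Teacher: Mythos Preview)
Your proof is correct and follows essentially the same approach as the paper's: both arguments hinge on the evaluation identities $\bar S(\xi)(0)=\xi$ and $\hat S(f)(0)=0$, introduce the auxiliary pair (your $\tilde u,\tilde v$ is the paper's $\bar u,\bar v$), and use evaluation at $t=0$ to identify the initial data and collapse $N$ to $M$ at a fixed point. The only cosmetic difference is that the paper treats the direction $N\Rightarrow M$ first and you treat $M\Rightarrow N$ first.
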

\begin{proof}
Note that
\begin{equation}\label{eq:t1}
N_1(u,v)=\bar S(u_0)+\hat S(F(u,v))\text{ and }N_2(u,v)=\bar S(v_0)+\hat S(G(u,v)),
\end{equation} where
$u_0=\alpha(\bar u,\bar v)$, $v_0=\beta(\bar u,\bar v)$ and
\begin{equation}\label{eq:t2}
\bar u=\bar S(u(0))+\hat S(F(u,v)),\quad \bar v=\bar S(v(0))+\hat S(G(u,v)).
\end{equation}
From \eqref{eq:t1} and the properties of $\bar S$ and $\hat S$ we have $N_1(u,v)(0)=u_0$ and $N_2(u,v)(0)=v_0$. Therefore, if $N(u,v)=(u,v)$, then
$\bar u=u$ and $\bar v=v$ and, consequently, $u_0=\alpha(u,v)$ and $v_0=\beta(u,v)$. By \eqref{eq:t1} we arrive at $M(u,v)=(u,v)$.

Conversely, if $M(u,v)=(u,v)$, then
\begin{equation}\label{eq:t3}u=\bar S(\alpha(u,v))+\hat S(F(u,v)),\ v=\bar S(\beta(u,v))+\hat S(G(u,v))\end{equation} and $u(0)=\alpha(u,v)$, $v(0)=\beta(u,v)$. Therefore we have
\[u=\bar S(u(0))+\hat S(F(u,v))\text{ and }v=\bar S(v(0))+\hat S(F(u,v)).\] Plugging this into \eqref{eq:t3} we obtain $N(u,v)=(u,v)$.
\end{proof}

From the proof it follows in particular, that
\begin{equation}\label{eq:N,N(0)}
\begin{array}{c}
N_1(u,v)=\bar S(N_1(u,v)(0))+\hat S(F(u,v)),\text{ and }\\ N_2(u,v)=\bar S(N_2(u,v)(0))+\hat S(G(u,v)),
\end{array}
\end{equation}
which 
yields $N(\E_+\times\E_+)\subset K_1\times K_2=K$.

From Proposition~\ref{prop:M-compact} we know that a necessary condition for the operator $M$ to be completely continuous is the complete continuity of $\alpha$ and $\beta$. In the case of the operator $N$ we can weaken the assumptions on $\alpha$ and $\beta$.

\begin{prop}\label{prop:N-compact} The operator $N$ is completely continuous if the images $\alpha(U_1\times U_2)$ and $\beta(U_1\times U_2)$ are relatively compact for all bounded sets $U_1,U_2\subset \E$ that satisfy the following property:
\begin{equation}\label{eq:almost-compact-sets}
\text{For all}\ \varepsilon>0\ \text{the set}\ \set{u(t)}{u\in U_i,\ t\in[\varepsilon,\tm]}\ \text{is relatively compact},\ i=1,2.	
\end{equation}
\end{prop}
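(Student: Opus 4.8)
The plan is to establish complete continuity of $N$ by verifying the two defining properties separately: relative compactness of the range over bounded sets, and continuity. The key structural observation is the decomposition in \eqref{eq:N,N(0)}, which expresses each $N_i$ as the sum of a term of the form $\bar S(\cdot)$ applied to the output of $\alpha$ or $\beta$, plus the term $\hat S(F(u,v))$ (respectively $\hat S(G(u,v))$). Since Proposition~\ref{prop:M-compact} already gives the complete continuity of $\hat S$, the only genuinely new work concerns the compositions $\bar S\circ\alpha$ and $\bar S\circ\beta$ evaluated on the internally-generated arguments. First I would fix bounded sets in $\E\times\E$ and track how $N$ transforms them, reducing everything to the hypothesis on $\alpha$ and $\beta$.

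The main point is to check that the intermediate quantities $\bar u$ and $\bar v$ from \eqref{eq:t2}, which are exactly the arguments fed into $\alpha$ and $\beta$, range over sets satisfying the special property \eqref{eq:almost-compact-sets}. So the first key step is: given a bounded $B\subset\E\times\E$, set $U_1=\set{\bar u}{(u,v)\in B}$ and $U_2=\set{\bar v}{(u,v)\in B}$, where $\bar u=\bar S(u(0))+\hat S(F(u,v))$. These sets are bounded because $\bar S$ is continuous and $\hat S\circ F$ maps bounded sets to bounded sets. The crucial claim is that $U_1,U_2$ satisfy \eqref{eq:almost-compact-sets}: for fixed $\varepsilon>0$, the evaluation $\bar u(t)=S(t)u(0)+\hat S(F(u,v))(t)$ for $t\in[\varepsilon,\tm]$ lands in a relatively compact set. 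This is where the \emph{immediate compactness} of the semigroup $S$ does the work — $S(t)$ for $t\geq\varepsilon$ is a compact operator uniformly in $t$, so $\set{S(t)u(0)}{t\in[\varepsilon,\tm]}$ is relatively compact by compactness of $S(\varepsilon)$ applied to the bounded set $\set{S(t-\varepsilon)u(0)}{\ldots}$, and the $\hat S(F(u,v))(t)$ piece is handled using the pointwise relative compactness already established inside the proof of Proposition~\ref{prop:M-compact}.

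Once \eqref{eq:almost-compact-sets} is verified for $U_1,U_2$, the hypothesis gives that $\alpha(U_1\times U_2)$ and $\beta(U_1\times U_2)$ are relatively compact subsets of $E$. Then $\bar S$ applied to a relatively compact subset of $E$ yields a relatively compact subset of $\E$, by continuity of $\bar S$ together with the fact that $\bar S$ carries compact sets to compact sets (indeed $\bar S$ is a bounded linear operator, so it preserves relative compactness). Adding the relatively compact contribution $\hat S(F(u,v))$ from the complete continuity of $\hat S$ gives that $N_1(B)$ and $N_2(B)$ are relatively compact, so $N(B)$ is relatively compact in $\E\times\E$.

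It remains to address continuity of $N$. Here I would argue that each building block is continuous: $\bar S$ and $\hat S$ are continuous linear operators, $F$ and $G$ are continuous (as noted after \eqref{eq:assumpt fg}), the evaluation $(u,v)\mapsto(u(0),v(0))$ is continuous, and $\alpha,\beta$ are assumed continuous. Hence the compositions defining $N_1,N_2$ in the displayed formulas are continuous, being finite compositions and sums of continuous maps. I expect the main obstacle to be the verification that $U_1,U_2$ satisfy property \eqref{eq:almost-compact-sets} — this is the one place where the precise interplay between immediate compactness of $S$ and the structure of the Duhamel term $\hat S$ must be used, and it is the reason the weaker hypothesis on $\alpha,\beta$ (rather than full complete continuity) suffices.
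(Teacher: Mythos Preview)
Your proposal is correct and follows essentially the same route as the paper: decompose $N_1(u,v)=\bar S(\alpha(\bar u,\bar v))+\hat S(F(u,v))$, verify that the sets $\{\bar u\}$ and $\{\bar v\}$ arising from a bounded input satisfy \eqref{eq:almost-compact-sets} using immediate compactness of $S$ for the $\bar S(u(0))$ term and the complete continuity of $\hat S$ (from Proposition~\ref{prop:M-compact}) for the Duhamel term, and then invoke the hypothesis on $\alpha,\beta$. One small sharpening: when you treat the $\hat S(F(u,v))(t)$ piece you cite ``pointwise relative compactness'', but what is actually needed (and what the proof of Proposition~\ref{prop:M-compact} in fact supplies) is relative compactness of $\hat S(F(B))$ in $\E$, from which total boundedness of $\{\hat S(F(u,v))(t):(u,v)\in B,\ t\in[0,\tm]\}$ in $E$ follows via compactness of $[0,\tm]$.
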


\begin{proof} We prove, without loss of generality,  the complete continuity of $N_1$. Let $\bar\alpha(u,v)=\alpha(\bar u,\bar v)$, where $\bar u(u,v)$ and $\bar v(u,v)$ are defined by \eqref{eq:t2}. From \eqref{eq:t1} it follows that $N_1(u,v)=\bar S(\bar\alpha(u,v))+\hat S(F(u,v))$. By Proposition \ref{prop:M-compact}, it suffices to show that $\bar\alpha$ is completely continuous. This will be done if we demonstrate, that $\bar u(U\times U)$ and $\bar v(U\times U)$ satisfy \eqref{eq:almost-compact-sets} for a given bounded set $U\subset {\mathcal E}$.

Let $\varepsilon>0$ be given. Put $R=\sup\set{|u|}{u\in U}$. For $u,v\in U$ and $t\geq \varepsilon$ we obtain
\[\bar S(u(0))(t)=S(\varepsilon)S(t-\varepsilon)u(0)\in S(\varepsilon)D(0,R)=:{\mathcal C}\] and the set ${\mathcal C}$ is relatively compact.
Moreover, the proof of Proposition \ref{prop:M-compact} shows that the set  $\hat S( F(U\times U))$  is relatively compact in $\mathcal E$. By the standard arguments, utilizing the compactness of $[0,\tm]$, one can show that the set \[\set{\hat S(F(u,v))(t)}{u,v\in U,\ t\in[0,\tm]}\] is totally bounded (and therefore relatively compact) in $E$. This shows that $\bar u(U\times U)$ satisfies the condition \eqref{eq:almost-compact-sets}. Similarly we can verify this condition for the set $\bar v(U\times U)$. \end{proof}

\begin{exa}\label{ex:alpha=u(t_0)} Let $\alpha(u,v)=u(t_0)$, $\beta(u,v)=v(t_0)$, where $0\leq t_0\leq \tm$. Then $\alpha,\beta$ satisfy the condition from Proposition \ref{prop:N-compact} if and only if $t_0>0$. Indeed, let $t_0=0$ and $U_1=U_2=\set{S(\cdot)u}{|u|\leq 1}$. Then the condition \eqref{eq:almost-compact-sets} is satisfied, but the set
\[\alpha(U_1\times U_2)=\set{u(0)}{u\in U_1}=\set{u\in E}{|u|\leq 1}\] is not compact.

Conversely, if $t_0>0$ and sets $U_1,U_2$ satisfy the condition \eqref{eq:almost-compact-sets}, then $\alpha(U_1\times U_2)=\set{u(t_0)}{u\in U_1}$ is compact from \eqref{eq:almost-compact-sets}.
\end{exa}

\begin{prop}\label{ex:alpha-integral}
Let $\alpha, \beta$ be as in \eqref{init-con}. Then $\alpha,\beta$ satisfy the assumptions of Proposition~\ref{prop:N-compact}.
\end{prop}
\begin{proof}
Indeed, let us consider the sets $U_1,U_2\subset B(0,R)$ satisfying the condition \eqref{eq:almost-compact-sets} and let $\varepsilon>0$. From the uniform continuity of $G_1$ on $[0,\omega\cdot d]$, where
\[d=\max\set{g_1(u,v)}{0\leq u,v\leq R},\  \omega=\mu_1([0,\tm]),\] there exists $\delta>0$ such that $|G_1(p)-G_1(q)|< \varepsilon$ if $0\leq p,q\leq \omega\cdot d$, $|p-q|<\delta\cdot d$.

Since $\mu_1(\{0\})=0$, there exists $\sigma>0$ such that $\delta_0:=\mu_1([0,\sigma))<\delta$.

Put
\[U_i^\sigma := \set{u(t)}{u\in U_i,\ t\in[\sigma,\tm]},\ i=1,2\text{ and }
\Gamma^\sigma := \cl{\mathrm{conv}}\: g_1\!\left(U_1^\sigma\times U_2^\sigma\right).\]

From the Mazur Theorem, which states that the closed convex hull of a compact subset in a Banach space is compact, the set $\Gamma^\sigma$ is compact. Therefore,
\[\int_\sigma^\tm g_1(u(t),v(t))dt\in W^\sigma:=(\omega-\delta_0)\cdot \Gamma^\sigma\ \text{ and }\ \left|\int_0^\sigma g_1(u(t),v(t))dt\right|\leq \delta\cdot d\]  for $u\in U_1$, $v\in U_2$. Thus, by the choice of $\delta$, we deduce that for all $u\in U_1$, $v\in U_2$ we have
$\alpha(u,v)=u_1+u_2$, where $u_1\in G_1\!\left(W^\sigma\right)$ and $|u_2|<\varepsilon$. Because $G_1\!\left(W^\sigma\right)$ is compact and $\varepsilon$ is arbitrarily small, we obtain $\alpha(U_1\times U_2)$ is relatively compact. Similarly we can proceed with $\beta$.
\end{proof}

\begin{rem}
The assumption that $0$ is not an atom of the measures $\mu_1,\mu_2$ follows from the observation from Example~\ref{ex:alpha=u(t_0)}, where it is shown that the Dirac measure $\delta_0$ is not admissible in our theory. Moreover, in the case $\mu_1(\{0\})>0$ or $\mu_2(\{0\})>0$, the initial conditions $u(0)=\alpha(u,v)$, $v(0)=\beta(u,v)$ become implicit, making the considerations unnecessarily complicated or incorrect.
Note also that the conclusion of Proposition~\ref{ex:alpha-integral} can be applied to the case of:
\begin{enumerate}[(i)]
	\item \emph{multi-point} conditions of the type
\[\alpha(u,v)=\sum_{s=1}^{k} \alpha_su(t_s),\quad \beta(u,v)=\sum_{s=1}^{r} \beta_sv(t'_s),\] where $0<t_1<\ldots<t_k$, $0<t'_1<\ldots<t'_r$ and $\alpha_s,\beta_s>0$.
	\item \emph{integral} conditions of the type
\[\alpha(u,v)=\int_0^\tm g_1(u(t))dt,\quad \beta(u,v)=\int_0^\tm g_2(v(t))dt.\]	
\end{enumerate}
\end{rem}

Let $\alpha,\beta$ be as in \eqref{init-con}.
Consider the assumption
\begin{equation}\label{eq:gG}
 p_1u\leq g_1(u,v)\leq q_1u,\ p_2v\leq g_2(u,v)\leq q_2v,\ P_iu\leq G_i(u)\leq Q_i u,
\end{equation}
for all $u,v\geq 0$, $i\in\{1,2\}$, where $0<p_i\leq q_i$, $0<P_i\leq Q_i$, $i\in\{1,2\}$. Similar assumptions were used, for nonlinear nonlocal conditions in the context of ODEs, in \cite{b:gi-caa,b:gipp-nonlin}.

Under the assumption \eqref{eq:gG} we have the following estimates:
\begin{equation}\label{eq:alfa,beta-estimates}
\begin{split}
p_1P_1 \int_0^{\tm} u(t)d\mu_1(t) \leq \alpha(u,v)\leq q_1Q_1\int_0^{\tm} u(t)d\mu_1(t),\\
p_2P_2 \int_0^{\tm} v(t)d\mu_2(t) \leq \beta(u,v) \leq q_2Q_2\int_0^{\tm} v(t)d\mu_2(t).
\end{split}\end{equation}

Put
\begin{equation}\label{eq:inf-fg}
f_{r,R}^0:=\inf\limits_{\substack{t\in[t_0,t_1],\ x\in D  \\ mr_1\leq u\leq R_1,\ 0\leq v\leq R_2}} \frac{f(t,x,u,v)}{r_1},\quad
g_{r,R}^0:=\inf\limits_{\substack{t\in[t_0,t_1],\ x\in D  \\ 0\leq u\leq R_1,\ mr_2\leq v\leq R_2}}   \frac{g(t,x,u,v)}{r_2},
\end{equation}

\begin{equation}\label{eq:inf'-fg}
f_{r,R}:=\inf\limits_{\substack{t\in[t_0,t_1],\ x\in D  \\ mr_1\leq u\leq R_1,\ mr_2\leq v\leq R_2}} \frac{f(t,x,u,v)}{r_1},\quad
g_{r,R}:=\inf\limits_{\substack{t\in[t_0,t_1],\ x\in D  \\ mr_1\leq u\leq R_1,\ mr_2\leq v\leq R_2}}   \frac{g(t,x,u,v)}{r_2}
\end{equation}

and
\begin{equation}\label{eq:sup-fg}
f^{R}:=\sup\limits_{\substack{t\in[0,\tm],\ x\in\Omega \\ 0\leq u\leq R_1,\ 0\leq v\leq R_2}}\frac{f(t,x,u,v)}{R_1},\quad
g^{R}:=\sup\limits_{\substack{t\in[0,\tm],\ x\in\Omega \\ 0\leq u\leq R_1,\ 0\leq v\leq R_2}}\frac{g(t,x,u,v)}{R_2}.
\end{equation}

Note that we have $f_{r,R}^0\leq f_{r,R}\leq  f^{R}$ and  $g_{r,R}^0\leq g_{r,R}\leq  g^{R}$.

\begin{lem}\label{lem:estimates} Let $\alpha,\beta$ be as in~\eqref{init-con}.

Assume that the inequalities \eqref{eq:gG} are satisfied.

\begin{enumerate}[\upshape (a)]
	\item If $|u|\leq R_1$, $|v|\leq R_2$, then we have \[
\frac{|N_1(u,v)|}{R_1}\leq q_1Q_1C_1^1+ f^R (q_1Q_1C_2^1 + C_1'),\quad
\frac{|N_2(u,v)|}{R_2}\leq q_2Q_2C_1^2+ g^R (q_2Q_2C_2^2 + C_1'),
\]
where the constants
\begin{align*}\label{eq:C1C2}
C_1^i=&\left|\int_0^\tm S_2(\tau)\chi_\Omega d\mu_i(\tau)\right|,\
C_1'=\left|\int_0^\tm S_2(\tau)\chi_\Omega d\tau\right|,\\
C_2^i=&\left|\int_0^\tm\int_0^t S_2(\tau)\chi_\Omega d\tau d\mu_i(t)\right|
\end{align*} are positive.
	\item The following implications hold:
\begin{align*}
\fl u\geq r_1\implies \frac{\fl{N_1(u,v)}}{r_1}\geq p_1P_1(c_1^1+f_{r,R}^0c_2^1),\\
\fl v\geq r_2\implies \frac{\fl{N_2(u,v)}}{r_2}\geq p_2P_2(c_1^2+g_{r,R}^0c_2^2),\end{align*}
where the constants
\begin{equation}\label{eq:c1c2}
c_1^i= \fl{\int_0^\tm S_2(\tau)\chi_D d\mu_i(\tau)},\
c_2^i= \fl{ \int_{t_0}^\tm\int_{t_0}^{\min\{t,t_1\}}S_2(t-\tau)\chi_D d\tau d\mu_i(t)}\end{equation} are positive.
\item  If $\fl u\geq r_1 $ and $\fl v\geq r_2$, then we have
\[\frac{\fl{N_1(u,v)}}{r_1}\geq p_1P_1(c_1^1+f_{r,R}c_2^1),\quad \frac{\fl{N_2(u,v)}}{r_2}\geq p_2P_2(c_1^2+g_{r,R}c_2^2),\]
\end{enumerate}{}
\end{lem}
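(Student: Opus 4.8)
The plan is to carry out everything for $N_1$, the estimates for $N_2$ being obtained verbatim with $g,p_2,q_2,P_2,Q_2,g_{r,R}^0,g_{r,R},g^R$ in place of $f,p_1,q_1,P_1,Q_1,f_{r,R}^0,f_{r,R},f^R$. Throughout I work in $H=L^2(\Omega)$, identifying $S(t)$ with $S_2(t)$ on $E$ via Proposition~\ref{prop:iS=S2i}, and I use the positivity of $S_2$ together with the monotonicity, superadditivity and positive homogeneity of $\fl\cdot=\einf_D|\cdot|$. By the definition of $N_1$ and \eqref{eq:t2},
\[
N_1(u,v)(t)=S(t)\,\alpha(\bar u,\bar v)+\int_0^t S(t-\tau)F(u,v)(\tau)\,d\tau,\qquad \bar u=\bar S(u(0))+\hat S(F(u,v)),
\]
and similarly for $\bar v$; in particular $N_1(u,v)(0)=\alpha(\bar u,\bar v)$, whence $\fl{N_1(u,v)}=\fl{N_1(u,v)(0)}=\einf_{x\in D}\alpha(\bar u,\bar v)(x)$.

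For the upper bound (a) I first note that $0\le u(\tau)\le R_1\chi_\Omega$ and $0\le v(\tau)\le R_2\chi_\Omega$ in $H$, so that $F(u,v)(\tau)\le f^R R_1\chi_\Omega$ by the definition \eqref{eq:sup-fg} of $f^R$. Positivity of $S_2$ then gives $\bar u(t)\le R_1 S_2(t)\chi_\Omega+f^R R_1\int_0^t S_2(\sigma)\chi_\Omega\,d\sigma$ (after the substitution $\sigma=t-\tau$ in the convolution); integrating in $t$ and taking norms reproduces the constants \eqref{eq:C1C2} and yields $\bigl|\int_0^{\tm}\bar u(t)\,dt\bigr|\le R_1(C_1+f^R C_2)$. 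By the upper estimate in \eqref{eq:alfa,beta-estimates} this bounds $|\alpha(\bar u,\bar v)|\le q_1Q_1R_1(C_1+f^R C_2)$. Splitting $N_1(u,v)(t)$ into its two terms, the semigroup term is controlled by $\|S(t)\|\le1$ and this bound, while the convolution term satisfies $\int_0^t S(t-\tau)F(u,v)(\tau)\,d\tau\le f^R R_1\int_0^t S_2(\sigma)\chi_\Omega\,d\sigma\le f^R R_1\int_0^{\tm}S_2(\sigma)\chi_\Omega\,d\sigma$, of norm at most $f^R R_1C_1$. Summing and dividing by $R_1$ gives exactly $q_1Q_1C_1+f^R(q_1Q_1C_2+C_1)$.

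For the lower bounds I begin with $\fl{N_1(u,v)}=\einf_D\alpha(\bar u,\bar v)\ge p_1P_1\,\fl{\int_0^{\tm}\bar u(t)\,dt}$, using the lower estimate in \eqref{eq:alfa,beta-estimates} and positive homogeneity. Writing $\int_0^{\tm}\bar u(t)\,dt=I_1+I_2$ with $I_1=\int_0^{\tm}S(t)u(0)\,dt$ and $I_2=\int_0^{\tm}\int_0^t S(t-\tau)F(u,v)(\tau)\,d\tau\,dt$, superadditivity gives $\fl{I_1+I_2}\ge\fl{I_1}+\fl{I_2}$. Since $\fl u\ge r_1$ forces $u(0)\ge r_1\chi_D$, positivity yields $I_1\ge r_1\int_0^{\tm}S_2(t)\chi_D\,dt$, so $\fl{I_1}\ge r_1c_1$. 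For $I_2$ I discard the region $t<t_0$ and restrict the inner integral to $\tau\in[t_0,\min\{t,t_1\}]$; on $[t_0,t_1]\times D$ the weak Harnack inequality \eqref{eq:Harnack} gives $mr_1\le u(\tau)(x)\le R_1$, and as $0\le v(\tau)(x)\le R_2$, the definition \eqref{eq:inf-fg} forces $F(u,v)(\tau)\ge f_{r,R}^0 r_1\chi_D$; positivity of $S_2$ and the definition \eqref{eq:c1c2} of $c_2$ then give $\fl{I_2}\ge f_{r,R}^0 r_1 c_2$, proving (b). Part (c) is identical, the extra hypothesis $\fl v\ge r_2$ producing via \eqref{eq:Harnack} the bound $mr_2\le v(\tau)(x)$ on $[t_0,t_1]\times D$, so that \eqref{eq:inf'-fg} gives $F(u,v)(\tau)\ge f_{r,R}r_1\chi_D$ and hence the constant $f_{r,R}$.

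It remains to record that the constants are positive. For $c_1$ this follows from $\int_0^{\tm}S_2(\tau)\chi_D\,d\tau\ge\int_{t_0}^{t_1}S_2(\tau)\chi_D\,d\tau$ and Corollary~\ref{cor:m>0}(b), and then $C_1,C_2>0$ follow a fortiori from $\chi_D\le\chi_\Omega$. The one step requiring a genuine argument, which I regard as the main technical point, is $c_2>0$: restricting the outer integral to $t\in[t_0,t_1]$ turns the inner integral into $\int_0^{t-t_0}S_2(\sigma)\chi_D\,d\sigma$, and then interchanging the infimum with the time integral, via $\einf_D\int\ge\int\einf_D$, together with Corollary~\ref{cor:m>0}(b) (which gives $\fl{\int_0^{t-t_0}S_2(\sigma)\chi_D\,d\sigma}>0$ for every $t>t_0$), yields $c_2>0$. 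Beyond this, the whole proof is a matter of matching the pointwise semigroup bounds to the precise constants \eqref{eq:C1C2} and \eqref{eq:c1c2}, the only real care being the substitution $\sigma=t-\tau$ and the domain restriction that reproduce $C_2$ and $c_2$ exactly.
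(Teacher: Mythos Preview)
Your proof is correct and follows essentially the same approach as the paper's: you use the representation $N_1(u,v)=\bar S(\alpha(\bar u,\bar v))+\hat S(F(u,v))$, bound $\bar u$ via $u\le R_1\chi_\Omega$ and $F(u,v)\le f^R R_1\chi_\Omega$ for part~(a), and for parts~(b),(c) combine $u(0)\ge r_1\chi_D$ with the weak Harnack inequality \eqref{eq:Harnack} to get the pointwise lower bound on $F(u,v)$ over $[t_0,t_1]\times D$, then invoke superadditivity of $\fl\cdot$. Your treatment of the positivity of $c_2$ is more explicit than the paper's (which simply cites Corollary~\ref{cor:m>0}), but the arguments are otherwise the same.
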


\begin{proof}
Consider $u,v$ such that $|u|\leq R_1$, $|v|\leq R_2$.

(a) Using the symbols $u_0$, $\bar u$, $\bar v$ introduced in the proof of Proposition \ref{prop:fixed-sets} and having in mind the equality $u_0=\alpha(\bar u,\bar v)$ and the estimates \eqref{eq:alfa,beta-estimates} we obtain
\begin{equation}\label{eq:a1}
\frac 1{q_1Q_1}u_0  \leq \int_0^\tm\bar u(t)d\mu_1(t).
\end{equation}
Exploiting the equation \eqref{eq:t2} we obtain
\begin{equation}\label{eq:a2}
\int_0^\tm\bar u(t)d\mu_1(t) = \int_0^\tm S(t)u(0)d\mu_1(t)+\int_0^\tm\int_0^t S(t-\tau)F(u,v)(\tau)d\tau d\mu_1(t).
\end{equation}
From \eqref{eq:a1}, \eqref{eq:a2}, \eqref{eq:sup-fg} and the fact that $u\leq R_1\chi_\Omega$ it can be concluded that

\begin{equation}\label{eq:a3}\frac 1{q_1Q_1}u_0 \leq R_1\int_0^\tm S_2(t)\chi_\Omega d\mu_1(t)+R_1f^R\int_0^\tm\int_0^t S_2(\tau)\chi_\Omega d\tau d\mu_1(t).\end{equation}
From \eqref{eq:t1}, we obtain
\begin{equation}\label{eq:a4}
N_1(u,v)(t)=S(t)u_0+\int_0^tS(t-\tau)F(u,v)(\tau)d\tau\leq
S(t)u_0+R_1f^R\int_0^\tm S_2(\tau)\chi_\Omega d\tau.
\end{equation}
Combining \eqref{eq:a3} and \eqref{eq:a4} an applying the contractiveness of $S(t)$ gives
\[
\frac{|N_1(u,v)|}{R_1}\leq q_1Q_1C_1^1+ f^R (q_1Q_1C_2^1 + C_1').
\]

Similarly, we can obtain the estimate of $|N_2(u,v)|$.

(b) Assume now that $\fl u\geq r_1$. Then $u(0)\geq r_1\chi_D$. From \eqref{eq:Harnack} and \eqref{eq:inf-fg} we obtain
\[u(t)\geq mr_1\chi_D\text{ and } F(u,v)(t)\geq r_1f_{r,R}^0\chi_D \text{ for }t\in[t_0,t_1].\]

Using the symbols $\bar u,\bar v$ introduced in the proof of Proposition \ref{prop:fixed-sets} we obtain
\begin{align*}
\frac 1{p_1P_1}N_1(u,v)(0) & =\frac 1{p_1P_1} \alpha(\bar u,\bar v)\geq \int_0^\tm \bar u(t)d\mu_1(t) \\
& = \int_0^\tm S(t) u(0)d\mu_1(t)+ \int_0^\tm\int_0^t S(t-\tau)F(u,v)(\tau)d\tau d\mu_1(t)\geq \\
& \geq r_1\int_0^\tm S_2(t)\chi_Dd\mu_1(t)+ r_1f_{r,R}^0 \int_{t_0}^\tm\int_{t_0}^{\min\{t,t_1\}}S_2(t-\tau) \chi_D d\tau d\mu_1(t).
\end{align*}

Using the superadditivity of $\fl\cdot$ we obtain
\[\frac{\fl{N_1(u,v)}}{r_1}\geq p_1P_1(c_1^1+f_{r,R}^0c_2^1).\]
In the same manner we can obtain the estimate of $\fl{N_2(u,v)}$.

(c) Using the fact, that $\fl v\geq r_2$ implies $v(t)\geq mr_2\chi_D$ and following the calculations analogous to those above, we obtain the conclusion.

The positiveness of the constants $c_1^i,c_2^i,C_1^i,C_2^i,C_1'$, $i=1,2$, follows from Corollary \ref{cor:m>0}.
\end{proof}

\begin{rem}\label{rem:WhySuchHarnack}
As it was pointed out in Remark \ref{rem:Harnack}, the Harnack-type inequality \eqref{eq:Harnack} is an analogue of the inequality (3.4) of \cite{b:gi-mm-rp}. These two inequalities play a crucial role in obtaining the estimates from below and are utilized for the calculation of the fixed point index on some suitable subsets of a cone.

The difference between these two Harnack-type inequalities deserves a comment, as our choice to use the inequality \eqref{eq:Harnack} led us to build the new theory presented in Section~\ref{sect:abstr}. The inequality (3.4) of \cite{b:gi-mm-rp} was directly derived from a Harnack-type inequality given by Trudinger \cite{b:Trud-ell}. The natural counterpart in our context would be the \textit{parabolic} Harnack inequality by Trudinger \cite[Theorem 1.2]{b:Trud}, which is valid for all weak supersolutions $u$ of the equation $u_t-\Delta u=0$ (that is, functions $u\geq 0$ such that $u_t-\Delta u\geq 0$). This inequality could be expressed in the following manner:
\begin{equation}\label{eq:Harnack-bad}u\geq c\|u\|\chi_{Q_-},
\end{equation} where  $c>0$ is a constant and \[\|u\|=\int_{Q_+}|u(t,x)|dtdx,\ Q_+=\left[t^0,t^1\right]\times D,\ Q_-=\left[t_0,t_1\right]\times D\] for
$0<t^0<t^1<t_0<t_1\leq \tm$. However, the use of this inequality is somewhat unnatural and it seems that it leads to additional complication of the argument and to worse results. For the sake of brevity, we provide here only a brief explanation.

1) The inequality \eqref{eq:Harnack-bad} gives a lower bound on the values of $u$ on $Q_-$, which depend on the values of $u$ on $Q_+$. The proof of Lemma \ref{lem:estimates}(b) shows that it is more convenient to utilize the dependence of values $u|Q_-$ on values  of $u|\{0\}\times D$, due to the presence of the nonlocal boundary condition $u(0,x)=\alpha(u,v)(x)$.

2) The inequality \eqref{eq:Harnack} is actually a consequence of the inequality $u\geq m\fl u\chi_{[t_0,t_1]\times D}$ used for $u(t)=S_2(t)\chi_D$, which follows from the very definition of $m$. On the other hand, the inequality \eqref{eq:Harnack-bad} is more general in the sense that the constant $c$ is so chosen that a supersolution $u$ of the equation $u_t-\Delta u=0$ satisfies the estimate $u(t,x)\geq c$ on $Q_-$ \emph{whenever} $\|u\|\geq 1$. This universality, which in other context proves to be very important, is not exploited in our consideration, and unfortunately, it effects in a negative way the constants arising in the lower bounds of the nonlinearities (the counterparts of $c_1$ and $c_2$).  And lastly, those constants are more difficult to be established. In other words, having in mind the nature of the calculations from Lemma \ref{lem:estimates}, the use of minimum $\fl u$ is more convenient, natural and effective than the use of the integral seminorm $\|u\|$.
\end{rem}

\subsection{Existence results}
We are now prepared to establish some sufficient conditions for the existence of nonnegative nontrivial solutions of the problem \eqref{eq:parabolic}. In what follows we shall assume that $\alpha,\beta$ are as  in~\eqref{init-con}
and that the estimates \eqref{eq:gG} is satisfied.

\begin{thm}\label{thm:existence1 appl}
Assume there exist radii $0<r_i<R_i$, $i\in\{1,2\}$ such that
\begin{equation}\label{eq:ind1-appl}
q_1Q_1C_1^1+ f^R (q_1Q_1C_2^1 + C_1')\leq 1,\quad
q_2Q_2C_1^2+ g^R (q_2Q_2C_2^2 + C_1')\leq 1
\end{equation}
and
\begin{equation}\label{eq:ind0-appl}
p_1P_1(c_1^1+f_{r,R} c_2^1)>1,\quad
p_2P_2(c_1^2+g_{r,R} c_2^2)>1.
\end{equation}

Then there exists at least one nonnegative solution $(u,v)$ of the problem \eqref{eq:parabolic} such that $|u|\leq R_1,$
$|v|\leq R_2$ and  $\fl u  > r_1 $, $\fl v  > r_2$.

\end{thm}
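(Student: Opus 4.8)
The plan is to apply the abstract existence result Theorem \ref{thm:existence 2a} to the operator $N=(N_1,N_2)$ constructed in this section, and then to transfer the resulting fixed point to a mild solution of \eqref{eq:parabolic}. First I would record the preliminary facts needed to fit the problem into the framework of Section \ref{sect:abstr}. Since $\alpha,\beta$ are as in Example \ref{ex:alpha-integral}, Proposition \ref{prop:N-compact} guarantees that $N$ is completely continuous, and the relation \eqref{eq:N,N(0)} shows that $N$ maps $\E_+\times\E_+$, and in particular $K=K_1\times K_2$, into $K$. Moreover, with $\psi(t)=\varphi$ and $\varphi|D\equiv 1$ one has $\fl\psi=\fl{\psi(0)}=\inf_{x\in D}\varphi(x)=1$, so $\fl{\psi_i}=1$ for $i=1,2$; hence the hypothesis $0<r_i<R_i$ of the present theorem is exactly the requirement $0<r_i<\fl{\psi_i}R_i$ of Theorem \ref{thm:existence 2a}.

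Next I would verify the two families of inequalities demanded by Theorem \ref{thm:existence 2a}. For the upper bounds \eqref{eq:ind1'}, Lemma \ref{lem:estimates}(a) gives, for every $(u,v)\in C$,
\[
\frac{|N_1(u,v)|}{R_1}\leq q_1Q_1C_1+f^R(q_1Q_1C_2+C_1),\qquad
\frac{|N_2(u,v)|}{R_2}\leq q_2Q_2C_1+g^R(q_2Q_2C_2+C_1),
\]
so assumption \eqref{eq:ind1-appl} yields at once $\sup_{(u,v)\in C}|N_i(u,v)|\leq R_i$. For the lower bounds \eqref{eq:ind0'} the key observation is that on each of the two boundary pieces appearing there both components are bounded below: on $\{\fl u=r_1,\ \fl v\geq r_2\}$ one has $\fl u\geq r_1$ and $\fl v\geq r_2$, and symmetrically on the other piece. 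This is precisely the hypothesis of Lemma \ref{lem:estimates}(c), which then gives
\[
\frac{\fl{N_1(u,v)}}{r_1}\geq p_1P_1(c_1+f_{r,R}c_2),\qquad
\frac{\fl{N_2(u,v)}}{r_2}\geq p_2P_2(c_1+g_{r,R}c_2).
\]
By assumption \eqref{eq:ind0-appl} the right-hand factors strictly exceed $1$, so $\fl{N_1(u,v)}>r_1$ on the first piece and $\fl{N_2(u,v)}>r_2$ on the second; since these lower bounds are uniform constants, passing to the infimum preserves the strict inequalities required in \eqref{eq:ind0'}.

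With both sets of hypotheses verified, Theorem \ref{thm:existence 2a} furnishes a fixed point $(u,v)\in K_1\times K_2$ of $N$ with $|u|\leq R_1$, $|v|\leq R_2$ and $\fl u>r_1$, $\fl v>r_2$. Finally, Proposition \ref{prop:fixed-sets} identifies the fixed point sets of $N$ and $M$, so $(u,v)$ is a fixed point of $M$, that is, a mild solution of \eqref{eq:parabolic} in the sense of Definition \ref{def:mild-solution}; nonnegativity is automatic from $(u,v)\in K_1\times K_2\subset\E_+\times\E_+$. I do not anticipate a genuine difficulty here, as the argument is essentially a verification; the one point requiring care is to invoke the correct estimate, namely Lemma \ref{lem:estimates}(c), which simultaneously uses the lower bounds $\fl u\geq r_1$ and $\fl v\geq r_2$ that hold on the boundary sets of Theorem \ref{thm:existence 2a}, rather than the one-sided estimate of Lemma \ref{lem:estimates}(b).
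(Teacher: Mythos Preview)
Your proposal is correct and follows essentially the same approach as the paper's proof: verify the hypotheses of Theorem~\ref{thm:existence 2a} via Lemma~\ref{lem:estimates}, noting that $\fl{\psi_i}=1$, and then invoke that abstract result. Your write-up is in fact more explicit than the paper's, which simply cites Lemma~\ref{lem:estimates} without distinguishing part~(c) from part~(b) and does not separately mention Proposition~\ref{prop:N-compact} or Proposition~\ref{prop:fixed-sets}; your care in singling out part~(c) as the relevant estimate (since \eqref{eq:ind0-appl} involves $f_{r,R}$, $g_{r,R}$ rather than $f_{r,R}^0$, $g_{r,R}^0$) is well placed.
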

\begin{proof} Lemma \ref{lem:estimates} implies that the conditions \eqref{eq:ind0} and \eqref{eq:ind1} are satisfied. Moreover,   $r_i<\fl{\psi_i} R_i$, since $\fl{\psi_i}=1$. The assertion follows from Theorem \ref{thm:existence 2a}.
\end{proof}

\begin{rem} It is worth mentioning that the condition $\fl u  > r_1$ implies $|u|> r_1$, which follows from both the definitions of $|\cdot|$ and $\fl\cdot$ and from Remark \ref{rem:lowerbound_norm}.
\end{rem}
\begin{thm}\label{thm:existence or appl}
 Let $0<r_1<R_1$, $0<r_2<R_2$ and let $\tilde R_1\leq R_1$, $\tilde R_2\leq R_2$. Define the quantities
\begin{equation*}\label{eq:inf''-fg}
f_{r,\tilde R}^{00}:=\inf\limits_{\substack{t\in[t_0,t_1],\ x\in D \\ 0\leq u\leq \tilde R_1,\ 0\leq v\leq \tilde R_2}} \frac{f(t,x,u,v)}{r_1},\quad
g_{r,\tilde R}^{00}:=\inf\limits_{\substack{t\in[t_0,t_1],\ x\in D \\ 0\leq u\leq \tilde R_1,\ 0\leq v\leq \tilde R_2}}   \frac{g(t,x,u,v)}{r_2}.
\end{equation*}

Assume that the condition \eqref{eq:ind1-appl} is satisfied and that
\begin{equation}
\label{eq:ind0 or appl}
f_{r,\tilde R}^{00}  \geq  \left(p_1P_1c_2^1\right)^{-1} \ \text{ or }\
g_{r,\tilde R}^{00}  \geq  \left(p_2P_2c_2^2\right)^{-1}.
\end{equation}
Then there exists a nontrivial nonnegative solution $(u,v)$ of \eqref{eq:parabolic} such that $ | u | \leq R_1$, $|v|\leq R_2$ and
\begin{equation}
\fl u  \geq r_1 \text{ or  }\fl v \geq r_2  \text{ or  } |u|>\tilde  R_1   \text{ or  }|v|>\tilde R_2.
\label{eq:conclusion ororor}
\end{equation}
In particular, if $\tilde R_1 =R_1 $ and $\tilde R _2 =R_2,$ then there exists a nontrivial nonnegative solution $(u,v)$ with either $ \fl u \geq r_1 $ or $ \fl v \geq r_2$.
\end{thm}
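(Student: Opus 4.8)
The plan is to reduce this existence-and-localization statement to the abstract result in Theorem \ref{thm:existence or a}, exactly as Theorem \ref{thm:existence1 appl} was reduced to Theorem \ref{thm:existence 2a}. First I would recall the identification already established: by Proposition \ref{prop:fixed-sets} the fixed points of the operator $N=(N_1,N_2)$ coincide with the mild solutions of \eqref{eq:parabolic}, and by \eqref{eq:N,N(0)} together with the positivity of the semigroup we have $N(\E_+\times\E_+)\subset K=K_1\times K_2$, so it suffices to produce a fixed point of $N$ in the cone with the asserted localization. I would then set $C=C(R_1,R_2)$ and check the two hypotheses \eqref{eq:ind1''} and \eqref{eq:ind0 or} of Theorem \ref{thm:existence or a} with the choices $r_i$, $R_i$ as given.

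The norm bound \eqref{eq:ind1''} is immediate: Lemma \ref{lem:estimates}(a) gives, for $|u|\leq R_1$ and $|v|\leq R_2$,
\[
\frac{|N_1(u,v)|}{R_1}\leq q_1Q_1C_1+ f^R(q_1Q_1C_2+C_1)\leq 1,
\]
and analogously for $N_2$, so that $\sup_{(u,v)\in C}|N_i(u,v)|\leq R_i$. The heart of the argument is the lower bound \eqref{eq:ind0 or}. Here I would take $A$ to be the subset
\[
A=\set{(u,v)\in C}{\fl u<r_1,\ \fl v<r_2,\ |u|\leq\tilde R_1,\ |v|\leq\tilde R_2}
\]
of $U=\set{(u,v)\in C}{\fl u<r_1,\ \fl v<r_2}$, and establish that on $A$ one of the two estimates $\inf_A\fl{N_1(u,v)}\geq r_1$ or $\inf_A\fl{N_2(u,v)}\geq r_2$ holds, according to which disjunct of \eqref{eq:ind0 or appl} is assumed. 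This is where the new quantities $f^{00}_{r,\tilde R}$ and $g^{00}_{r,\tilde R}$ enter: on $A$ we have the a priori ceilings $|u|\leq\tilde R_1$, $|v|\leq\tilde R_2$, so that $0\leq u(t)(x)\leq\tilde R_1$ and $0\leq v(t)(x)\leq\tilde R_2$ pointwise, which is precisely the range over which $f^{00}_{r,\tilde R}$ is an infimum.

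The main obstacle — and the step deserving the most care — is the lower estimate for $\fl{N_1}$ on $A$, because on this set we have \emph{no} positive lower bound $\fl u\geq r_1$ available, in contrast to Lemma \ref{lem:estimates}(b). The remedy is to discard the contribution of the semigroup term $\bar S(u(0))$ (which is nonnegative, so dropping it only decreases the estimate) and to retain only the Duhamel term $\hat S(F(u,v))$. Tracing through the computation of Lemma \ref{lem:estimates}(b) with this term alone, using $F(u,v)(t)\geq r_1 f^{00}_{r,\tilde R}\chi_D$ on $[t_0,t_1]$, the estimates \eqref{eq:alfa,beta-estimates} for $\alpha$, and the superadditivity together with monotonicity of $\fl\cdot$, I would obtain
\[
\frac{\fl{N_1(u,v)}}{r_1}\geq p_1P_1\, f^{00}_{r,\tilde R}\, c_2,
\]
which is $\geq 1$ precisely under the first alternative of \eqref{eq:ind0 or appl}. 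The symmetric computation handles the $g$-alternative. Theorem \ref{thm:existence or a} then yields a fixed point $(u,v)\in C$ with $(u,v)\notin A$, i.e.\ satisfying at least one of $\fl u\geq r_1$, $\fl v\geq r_2$, $|u|>\tilde R_1$, $|v|>\tilde R_2$; since $f,g\geq 0$ and the forcing terms are genuinely positive, this solution is nontrivial, giving \eqref{eq:conclusion ororor}. Finally, the special case $\tilde R_i=R_i$ collapses the two norm alternatives (which become vacuous on $C$), leaving only $\fl u\geq r_1$ or $\fl v\geq r_2$, as claimed.
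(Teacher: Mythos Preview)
Your proposal is correct and follows essentially the same route as the paper: define the set $A$ exactly as you do, invoke Lemma~\ref{lem:estimates}(a) together with \eqref{eq:ind1-appl} for the norm bound, and then rerun the computation of Lemma~\ref{lem:estimates}(b) dropping the $c_1$-term (since no lower bound on $\fl u$ is available) to obtain $\fl{N_1(u,v)}/r_1\geq p_1P_1 c_2 f^{00}_{r,\tilde R}$ on $A$, after which Theorem~\ref{thm:existence or a} applies. The paper records this estimate as \eqref{eq:estim(or)} and proceeds identically; your extra remark explaining \emph{why} the $c_1$-contribution must be discarded is a welcome clarification, and nontriviality follows simply because $(0,0)\in A$ while the fixed point lies outside $A$.
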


\begin{proof}
As in the previous proof we know that the condition \eqref{eq:ind1''} from Theorem~\ref{thm:existence or a} is satisfied. Let
\[(u,v)\in A:= \set{(u,v)}{\fl u<r_1,\ \fl v< r_2,\ |u|\leq \tilde R_1,\ |v|\leq \tilde R_2 }.\]

In the same way as in the proof of Lemma \ref{lem:estimates}(b) we can prove that
\begin{equation}\label{eq:estim(or)}
\frac{\fl{N_1(u,v)}}{r_1}\geq p_1P_1c_2^1 f_{r,\tilde R}^{00},\
\frac{\fl{N_2(u,v)}}{r_2}\geq p_2P_2c_2^2 g_{r,\tilde R}^{00}\ \text{ for } |u|\leq\tilde R_1\text{ and }|v|\leq\tilde R_2.
\end{equation}

Therefore, the assumption \eqref{eq:ind0 or appl} implies the condition \eqref{eq:ind0 or} is satisfied and we can apply Theorem \ref{thm:existence or a} to obtain a solution $(u,v)\not\in A$. Clearly, this is equivalent to \eqref{eq:conclusion ororor}.
\end{proof}

\begin{rem}
The importance of Theorem \ref{thm:existence or appl} consists in the fact that the
assumption \eqref{eq:ind0 or appl} involves only one component of the system
nonlinearity $(f,g).$ Therefore, it allows different kind of growth of $f$
and $g$ near the origin.
A similar remark also applies to the following theorem.
\end{rem}

\begin{thm} \label{thm:existence 3sols appl}
Assume there exist radii $0<\rho_i<r_i<R_i$, $i\in\{1,2\}$, such that
\[
q_1Q_1C_1^1+ f^R (q_1Q_1C_2^1 + C_1')\leq 1,\quad
q_2Q_2C_1^2+ g^R (q_2Q_2C_2^2 + C_1')\leq 1,
\]
\[
q_1Q_1C_1^1+ f^\rho (q_1Q_1C_2^1 + C_1')\leq 1,\quad
q_2Q_2C_1^2+ g^\rho (q_2Q_2C_2^2 + C_1')\leq 1
\]

and
\begin{equation}\label{eq:ind0-appl 3sols}
p_1P_1(c_1^1+f_{r,R} c_2^1)>1,\quad
p_2P_2(c_1^2+g_{r,R} c_2^2)>1
\end{equation}

Then there exist three nonnegative solutions $(u_i,v_i)$ ($i=1,2,3$) of the system \eqref{eq:parabolic} with
\begin{align*}
| u_1 | &<\rho _1,\ | v_1 | <\rho _2\  ( \text{possibly the zero solution}) ; \\
\fl{ u_2} &<r_1 \text{ or } \fl{ v_2} <r_2;\ | u_2| >\rho _1 \text{ or }|v_2| >\rho _2\ (\text{possibly one solution component zero}) ; \\
\fl{ u_{3}} &>r_1,\ \fl{ v_{3}} >r_2\  (\text{both solution components nonzero}) .
\end{align*}

By the following slight strengthening of the assumption~\eqref{eq:ind0-appl 3sols}:
\begin{equation}\label{eq:ind0-appl 3sols'}
p_1P_1(c_1^1+f_{r,R}^0 c_2^1)>1,\quad
p_2P_2(c_1^2+g_{r,R}^0 c_2^2)>1
\end{equation}
we obtain a slight improvement of the precision in localizing the second solution:
\[\fl{ u_2} <r_1,\ \fl{ v_2} <r_2;\ | u_2| >\rho _1 \ \text{or }|
v_2| >\rho _2.\]

Moreover, having given numbers $0<\varrho_i< \rho_i$ ($i=1,2$),

\emph{(i)} if
\[p_1P_1(c_1^1+f_{\varrho,\rho} c_2^1)>1,\quad
  p_2P_2(c_1^2+g_{\varrho,\rho} c_2^2)>1,\]
then $\fl{u_1} \geq \varrho_1$ and $\fl{v_1} \geq \varrho_2$;

\emph{(ii)} if
\[
f_{\varrho,\tilde \rho}^{00} \geq  \left(p_1P_1c_2^1\right)^{-1} \ \text{ or }\
g_{\varrho,\tilde \rho}^{00} \geq  \left(p_2P_2c_2^2\right)^{-1}
\]
for some $\tilde \rho_1\leq \rho _1,$ $\tilde{\rho}_2\leq \rho _2$, then $\fl{u_1} \geq \varrho_1$ or $\fl{ v_1} \geq \varrho_2$ or $|u_1|>\tilde \rho_1$ or $|v_1|>\tilde\rho_2$.
\end{thm}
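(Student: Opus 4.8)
The plan is to read Theorem~\ref{thm:existence 3sols appl} as the concrete incarnation of the abstract multiplicity results of Subsection~2.3, so that the whole argument reduces to translating the analytic hypotheses on $f,g$ into the cone-theoretic hypotheses of Theorems~\ref{thm:existence 3sols a}, \ref{thm:existence 3sols a'} and \ref{thm:existence 3sols a''} by means of Lemma~\ref{lem:estimates}. First I would record the structural facts that make the abstract machinery available: since $\alpha,\beta$ are of the form in Example~\ref{ex:alpha-integral}, Proposition~\ref{prop:N-compact} guarantees that $N=(N_1,N_2)$ is completely continuous; the identity \eqref{eq:N,N(0)} gives $N(K)\subset K$, so in particular every fixed point is nonnegative; and because $\psi_i$ was chosen with $\varphi|D\equiv 1$ we have $\fl{\psi_i}=1$, whence the ordering $0<\rho_i<r_i<R_i$ is exactly the abstract ordering $0<\fl{\psi_i}\rho_i<r_i<\fl{\psi_i}R_i$.

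Next I would verify the three hypotheses of Theorem~\ref{thm:existence 3sols a} one at a time. The two upper estimates at level $R$ are, via Lemma~\ref{lem:estimates}(a), precisely $\sup_{C}|N_i(u,v)|\le R_i$, i.e.\ \eqref{eq:ind1'''}. The lower estimates \eqref{eq:ind0-appl 3sols} feed into Lemma~\ref{lem:estimates}(c): on the face $\fl u=r_1,\ \fl v\ge r_2$ both requirements $\fl u\ge r_1$ and $\fl v\ge r_2$ of part~(c) hold, so $\fl{N_1(u,v)}/r_1\ge p_1P_1(c_1+f_{r,R}c_2)>1$, and symmetrically for $N_2$; this is exactly the strict condition \eqref{eq:ind0'i}. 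Finally, Lemma~\ref{lem:estimates}(a) applied with the radii $\rho$ (using the second pair of upper estimates) yields $|N_i(u,v)|\le\rho_i$ whenever $|u|\le\rho_1,\ |v|\le\rho_2$, which is what should produce the Leray--Schauder/homotopy condition \eqref{eq:ind1-rho}. With \eqref{eq:ind0'i}, \eqref{eq:ind1'''} and \eqref{eq:ind1-rho} in hand, Theorem~\ref{thm:existence 3sols a} delivers the three fixed points of $N$ with the stated localization, and Proposition~\ref{prop:fixed-sets} identifies them with mild solutions of \eqref{eq:parabolic}. The weak Harnack inequality \eqref{eq:Harnack} enters only implicitly here, being the mechanism behind the lower bounds of Lemma~\ref{lem:estimates}(b),(c).

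The delicate step, and where I expect the real work to sit, is the verification of \eqref{eq:ind1-rho}. Suppose $N(u,v)=\lambda(u,v)$ with $\lambda\ge 1$ and, say, $|u|=\rho_1$, $|v|\le\rho_2$; taking norms in the first component gives $\lambda\rho_1=|N_1(u,v)|\le\rho_1$, hence $\lambda\le 1$. This cleanly excludes $\lambda>1$, but the borderline value $\lambda=1$ corresponds to a fixed point sitting on the $\rho$-sphere. Such a point is already a nonnegative solution of \eqref{eq:parabolic}, and it is excluded outright as soon as the $\rho$-level upper estimates are required to hold strictly (so that $|N_1(u,v)|<\rho_1$). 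Ensuring that the homotopy $t\mapsto tN$ is genuinely fixed-point free on $\partial W$, equivalently ruling out this borderline fixed point, is the main obstacle.

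For the remaining refinements the radii are simply relabelled. Replacing \eqref{eq:ind0-appl 3sols} by the stronger \eqref{eq:ind0-appl 3sols'} written with $f^0_{r,R},g^0_{r,R}$ and invoking Lemma~\ref{lem:estimates}(b) (which needs only $\fl u\ge r_1$, respectively $\fl v\ge r_2$) produces the one-sided condition \eqref{eq:ind0''}; Theorem~\ref{thm:existence 3sols a'} then sharpens the middle solution to $\fl{u_2}<r_1$ and $\fl{v_2}<r_2$. For parts (i) and (ii) I would apply Theorem~\ref{thm:existence 3sols a''} with the substitution $r:=\varrho,\ R:=\rho$ (and $\tilde R:=\tilde\rho$): in (i), Lemma~\ref{lem:estimates}(c) with these radii turns $p_1P_1(c_1+f_{\varrho,\rho}c_2)>1$ into \eqref{6varrho}, while the already-assumed $\rho$-bound supplies $|N_i|\le\rho_i$, so part~(ii) of Theorem~\ref{thm:existence 3sols a''} forces $\fl{u_1}\ge\varrho_1$ and $\fl{v_1}\ge\varrho_2$; in (ii), the estimate \eqref{eq:estim(or)} (the $f^{00}$-analogue of Lemma~\ref{lem:estimates}(b)) converts the hypothesis into \eqref{6 or varrho}, and part~(iii) of Theorem~\ref{thm:existence 3sols a''} yields the disjunctive localization of $(u_1,v_1)$.
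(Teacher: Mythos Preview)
Your proposal is correct and follows essentially the same route as the paper: translate the analytic hypotheses into the cone-theoretic ones via Lemma~\ref{lem:estimates} (parts (a), (b), (c)) and then invoke Theorems~\ref{thm:existence 3sols a}, \ref{thm:existence 3sols a'} and \ref{thm:existence 3sols a''}, together with \eqref{eq:estim(or)} for part~(ii). The paper's own proof is a one-sentence pointer to exactly these ingredients, so your argument is simply a fleshed-out version of it.

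The borderline issue you isolate for \eqref{eq:ind1-rho}---that the non-strict $\rho$-level estimate $|N_i(u,v)|\le\rho_i$ excludes $\lambda>1$ but not a genuine fixed point on $\partial W$---is a legitimate observation, and the paper does not address it either; its proof simply asserts that Theorem~\ref{thm:existence 3sols a} applies. So this is not a defect peculiar to your write-up but a minor looseness in the paper's own formulation (remedied, as you note, by taking the $\rho$-level inequalities strict, or by absorbing a boundary fixed point into the localization with non-strict inequalities).
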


\begin{proof} One can use Lemma \ref{lem:estimates}. The first assertion follows from Theorem \ref{thm:existence 3sols a}, while second follows from Theorem \ref{thm:existence 3sols a'}. The third part of the conclusion, i.e. assertions (i) and (ii), is a consequence of Theorem \ref{thm:existence 3sols a''} and \eqref{eq:estim(or)}.
\end{proof}

\subsection{Non-existence results} We now present some sufficient conditions for the non-existence of positive solutions of the system~\eqref{eq:parabolic}. We still assume that $\alpha,\beta$ are as in Example~\ref{ex:alpha-integral} and that \eqref{eq:gG} is satisfied. We also assume that $\tm\in\mathrm{supp}(\mu_i)$, $i=1,2$.

\begin{lem}\label{lem:fl>0} Let $(u,v)$ be a solution of the system~\eqref{eq:parabolic}. If $u\neq 0$ then $\fl{u}>0$ and if $v\neq 0$ then $\fl{v}>0$.
\end{lem}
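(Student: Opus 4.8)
The plan is to show that any nontrivial solution $(u,v)$ of \eqref{eq:parabolic} necessarily has a strictly positive functional value $\fl{\cdot}$ on whichever component is nonzero. Recall that by Proposition \ref{prop:fixed-sets} a solution of \eqref{eq:parabolic} is exactly a fixed point of $N$, so that $(u,v)\in K_1\times K_2$ and, by \eqref{eq:N,N(0)},
\[u=\bar S(u(0))+\hat S(F(u,v))=\bar S(\alpha(u,v))+\hat S(F(u,v)),\]
with an analogous identity for $v$. I will argue only for $u$, the case of $v$ being symmetric.

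First I would unpack the definition $\fl u=\inf_{x\in D}u(0,x)$ and observe that $u(0)=\alpha(u,v)$, so it suffices to show $\inf_{x\in D}\alpha(u,v)(x)>0$ whenever $u\neq 0$. Using the lower estimate in \eqref{eq:alfa,beta-estimates} from the assumption \eqref{eq:gG}, namely $\alpha(u,v)\geq p_1P_1\int_0^{\tm}u(t)\,dt$, the problem reduces to showing that the integrated function $\int_0^{\tm}u(t)\,dt$ has strictly positive infimum over $D$. The key step here is the representation $u=\bar S(u(0))+\hat S(F(u,v))$ together with positivity: since $F(u,v)\geq 0$ and $\hat S$ preserves positivity, we have $u(t)\geq S(t)u(0)=S_2(t)\,i(u(0))$ for all $t$, so it will be enough to control $\int_0^{\tm}S(t)u(0)\,dt$ from below on $D$.

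The heart of the argument is then to establish that $u(0)=\alpha(u,v)$ is itself a nonzero, nonnegative element of $E$, for then Lemma \ref{lem:m_eta>0}(b) applied with $\eta=u(0)$ gives directly $\fl{\int_0^{\tm}S(\tau)u(0)\,d\tau}>0$, hence $\fl{u}>0$ by monotonicity of $\fl\cdot$. To see that $u(0)\neq 0$ under the hypothesis $u\neq 0$, I would argue by contraposition: if $u(0)=\alpha(u,v)=0$, then the strong-positivity lower bound $u(t)\geq S(t)u(0)$ degenerates, and I must show $u$ is forced to vanish identically. This follows because $u(0)=0$ gives $u=\hat S(F(u,v))$, and then from the lower estimate $\alpha(u,v)\geq p_1P_1\int_0^{\tm}u(t)\,dt\geq 0$ combined with $\alpha(u,v)=u(0)=0$ one deduces $\int_0^{\tm}u(t)\,dt=0$ on $\Omega$; since $u\in\E_+$ is continuous and nonnegative, $\int_0^{\tm}u(t)(x)\,dt=0$ forces $u(t)(x)=0$ for all $t$ and $x$, i.e. $u\equiv 0$, contradicting $u\neq 0$.

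The main obstacle I anticipate is the last reduction: carefully passing between the pointwise-in-$D$ strict positivity supplied by Lemma \ref{lem:m_eta>0}(b) (which requires a genuinely nonzero initial datum $\eta=u(0)$) and the hypothesis $u\neq 0$ stated on the whole trajectory. One must ensure that $u\neq 0$ really does yield $u(0)\neq 0$, which is exactly where the structure of the nonlocal condition $u(0)=\alpha(u,v)$ and the two-sided bounds \eqref{eq:gG} are essential — without a lower bound of the form $\alpha(u,v)\geq p_1P_1\int_0^{\tm}u(t)\,dt$ one could not rule out a nonzero trajectory with vanishing initial value. Once $u(0)\neq 0$ is secured, the conclusion $\fl u>0$ is immediate from Lemma \ref{lem:m_eta>0}(b), the monotonicity \eqref{eq:fl monot} of $\fl\cdot$, and the representation of $u$ as a positive perturbation of $\bar S(u(0))$.
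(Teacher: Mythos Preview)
Your argument is correct. The route is close to the paper's but not identical, and the difference is worth noting.

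The paper does not first prove $u(0)\neq 0$. Instead it picks any $t_0\in[0,\tm)$ with $u(t_0)\neq 0$ (which exists by continuity since $u\neq 0$), uses the shifted mild-solution inequality $u(t)\geq S(t-t_0)u(t_0)$ for $t\geq t_0$, and then applies Lemma~\ref{lem:m_eta>0}(b) directly with $\eta=u(t_0)$:
\[
\fl{u}=\fl{\alpha(u,v)}\geq p_1P_1\,\fl{\int_{t_0}^{\tm}u(t)\,dt}\geq p_1P_1\,\fl{\int_0^{\tm-t_0}S(\tau)u(t_0)\,d\tau}>0.
\]
This sidesteps your contraposition step entirely. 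Your version adds the intermediate observation that $u\neq 0\Rightarrow u(0)\neq 0$ (using $\alpha(u,v)=u(0)$ together with the lower bound $\alpha(u,v)\geq p_1P_1\int_0^{\tm}u(t)\,dt$ and nonnegativity), and then applies Lemma~\ref{lem:m_eta>0}(b) with $\eta=u(0)$. Both approaches rely on exactly the same ingredients (the lower bound \eqref{eq:alfa,beta-estimates}, positivity of the semigroup, and Lemma~\ref{lem:m_eta>0}(b)); the paper's is marginally shorter, while yours isolates the useful auxiliary fact that a nontrivial solution necessarily has nonzero initial datum.
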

\begin{proof} Let $0\leq t_0<\tm$ be such that $u(t_0)\neq 0$. From Proposition \ref{prop:fixed-sets} we know that $(u,v)$ is a fixed point of $M$. Thus,
\[\fl{u(0)}  =   \fl{\alpha(u,v)}\geq p_1P_1\fl{\int_0^\tm u(t)d\mu_1(t)}\geq p_1P_1\fl{\int_{t_0}^\tm u(t)d\mu_1(t)}\] and $u(t)\geq S(t-t_0)u(t_0)$ for $t\geq t_0$. From Lemma \ref{lem:m_eta>0} we therefore obtain
\[\fl{u}=\fl{u(0)}\geq p_1P_1 \fl{\int_{0}^{\tm-t_0} S(t)u(t_0) d\mu_1(t+t_0)}>0.\]
\end{proof}

\begin{thm}\label{thm:non-existence} Put
\[\overline e_i = \max\left(\frac{1-q_iQ_iC_1^i}{q_iQ_iC_2^i + C_1'},\frac{1-q_iQ_i\mu_i([0,\tm])}{C_1'} \right),\quad \underline e_i= \frac{(p_iP_i)^{-1}-c_1^i}{mc_2^i},\ i\in\{0,1\}.\]
 Assume that $(u_0,v_0)$ is a nonnegative solution of the system~\eqref{eq:parabolic}. If one of the following conditions holds:
\begin{align}
\label{eq:nonexistence u1} f(t,x,u,v)< \overline e_1 u &\text{ for all }t\in[0,\tm],\ x\in\Omega,\ u>0,\ v\geq 0,\\
\label{eq:nonexistence u2} f(t,x,u,v)> \underline e_1 u &\text{ for all }t\in[t_0,t_1],\ x\in D,\ u>0,\ v\geq 0,
\end{align}
then $u_0=0$.

Similarly, if one of the following conditions holds:
\begin{align}
\label{eq:nonexistence v1} g(t,x,u,v)< \overline e_2 v &\text{ for all }t\in[0,\tm],\ x\in\Omega,\ u\geq 0,\ v> 0,\\
\label{eq:nonexistence v2} g(t,x,u,v)> \underline e_2 v &\text{ for all }t\in[t_0,t_1],\ x\in D,\ u\geq 0,\ v> 0,
\end{align}
then $v_0=0$.

In particular, if $p_1P_1c_1^1>1$ ($p_2P_2c_1^2>1$), then $u_0=0$ ($v_0=0$), regardless of the properties of $f$  ($g$).
\end{thm}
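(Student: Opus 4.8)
The plan is to argue by contradiction, handling the two hypotheses on $f$ separately and reducing each to the one-sided bounds of Lemma~\ref{lem:estimates}; the three assertions about $v$ then follow by the identical argument after interchanging $(f,\alpha,p_1,P_1,q_1,Q_1)$ with $(g,\beta,p_2,P_2,q_2,Q_2)$. Write $u=u_0$ and suppose $u\neq 0$. By Proposition~\ref{prop:fixed-sets} the pair $(u,v)$ is a fixed point of $N$ and of $M$, so $u(0)=\alpha(u,v)$ and $u(t)=S(t)u(0)+\int_0^t S(t-\tau)F(u,v)(\tau)\,d\tau$. By Lemma~\ref{lem:fl>0}, $r:=\fl u=\fl{u(0)}>0$, whence $u(0)\neq 0$ and $u(0)\geq r\chi_D$ on $D$. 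Since $u\in K_1$ gives $u(t)\geq S(t)u(0)$, Proposition~\ref{prop:properties u} yields $u(t)(x)>0$ for $t\in(0,\tm]$, $x\in\Omega$, and the weak Harnack inequality \eqref{eq:Harnack} gives $u(t)\geq mr\chi_D$ for $t\in[t_0,t_1]$.

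For \eqref{eq:nonexistence u1} I would repeat the computation of Lemma~\ref{lem:estimates}(a). Because $u=v=0$ on $\partial\Omega$ and $u(\tau)(x)>0$ for $\tau>0$, the strict bound \eqref{eq:nonexistence u1} gives $F(u,v)(\tau)\leq\overline e_1 u(\tau)\leq\overline e_1|u|\chi_\Omega$ for every $\tau\in(0,\tm]$, which plays the role of $R_1 f^R\chi_\Omega$ with $R_1=|u|$. Estimating $\alpha(u,v)=u(0)$ through \eqref{eq:alfa,beta-estimates} in the two available ways---refined, $|u(0)|\leq q_1Q_1|u|(C_1+\overline e_1 C_2)$, and crude, $|u(0)|\leq q_1Q_1\tm|u|$---and adding the common contribution $\overline e_1|u|C_1$ coming from $\int_0^tS(t-\tau)F(u,v)\,d\tau$, I obtain (using $|N_1(u,v)|=|u|$)
\begin{equation*}
1\leq q_1Q_1C_1+\overline e_1(q_1Q_1C_2+C_1)\qquad\text{and}\qquad 1\leq q_1Q_1\tm+\overline e_1C_1.
\end{equation*}
Since $\overline e_1$ equals whichever of the two fractions in its definition is larger, the corresponding right-hand side is exactly $1$, so that inequality is an equality. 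The strictness of $f<\overline e_1 u$, together with the fact that $S(t)$ sends nonnegative nonzero functions to strictly positive ones for $t>0$ (Proposition~\ref{prop:properties u}), makes the nonnegative defect $\int_0^tS(t-\tau)(\overline e_1 u-F(u,v))\,d\tau$ not identically zero and thereby upgrades the equality to the strict inequality $1<1$, a contradiction; hence $u=0$.

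For \eqref{eq:nonexistence u2} I would mirror Lemma~\ref{lem:estimates}(b). On the compact set $[t_0,t_1]\times\cl D$ we have $u\geq mr>0$, so the continuous function $f(\cdot,\cdot,u,v)-\underline e_1 u$ is strictly positive there and hence $\geq\delta$ for some $\delta>0$; thus $F(u,v)(\tau)\geq(\underline e_1 mr+\delta)\chi_D$ for $\tau\in[t_0,t_1]$. Feeding $u(0)\geq r\chi_D$, the bound $\alpha(u,v)\geq p_1P_1\int_0^\tm u(t)\,dt$ from \eqref{eq:alfa,beta-estimates}, the superadditivity of $\fl\cdot$, and the definitions \eqref{eq:c1c2} into the computation of Lemma~\ref{lem:estimates}(b) gives
\begin{equation*}
r=\fl{N_1(u,v)}\geq p_1P_1\bigl(rc_1+(\underline e_1 mr+\delta)c_2\bigr)=r+p_1P_1\delta c_2>r,
\end{equation*}
where the middle equality uses $c_1+\underline e_1 mc_2=(p_1P_1)^{-1}$; this contradiction forces $u=0$. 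Dropping the double-integral term (using only $f\geq 0$) in the same chain gives $r\geq p_1P_1 rc_1$, i.e. $p_1P_1 c_1\leq 1$, so that $p_1P_1 c_1>1$ already forces $u=0$ irrespective of $f$, which is the final assertion.

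The main obstacle is the strictness in case \eqref{eq:nonexistence u1}: at the threshold $\overline e_1$ the estimate closes only up to equality, so one genuinely needs the strict hypothesis to be transported through the integral bound. Unlike case \eqref{eq:nonexistence u2}, where $[t_0,t_1]\times\cl D$ sits inside the region on which $u$ is bounded away from $0$ and yields a clean uniform gap $\delta$, here $u$ degenerates to $0$ near $\{0\}\times\Omega$ and near $\partial\Omega$, so no uniform gap exists and the positivity-improving property of the semigroup must be used to conclude that the defect above is nontrivial.
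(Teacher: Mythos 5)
Your handling of case \eqref{eq:nonexistence u2} and of the final claim is correct and is essentially the paper's own argument: the paper expresses your uniform gap $\delta$ as the strict inequality $f_{r,R}^0>m\underline e_1$ and contradicts the bound $1=\fl{u_0}/r_1\geq p_1P_1(c_1+f_{r,R}^0c_2)$ from Lemma~\ref{lem:estimates}(b); dropping the $c_2$-term gives $p_1P_1c_1\leq 1$, exactly as you do. (Both you and the paper gloss over the fact that the strict hypothesis is only assumed for $x\in D$, not on $\cl D$, so I will not press that point.)

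The genuine gap is in your closing step for case \eqref{eq:nonexistence u1}. After inserting the threshold constant $\overline e_1$ into the estimates and observing that the relevant right-hand side equals $1$, you assert that the nonnegative defect $\int_0^t S(t-\tau)\bigl(\overline e_1u-F(u,v)\bigr)\,d\tau$ being ``not identically zero'' upgrades the equality to a strict inequality. That inference is invalid: the inequality being upgraded compares \emph{suprema} ($|u_0|=|N_1(u_0,v_0)|\leq\cdots$), and subtracting a nonnegative, not-identically-zero function from a pointwise bound does not decrease its supremum unless the defect is positive near the points where the supremum of $u_0$ is attained. The defect vanishes identically at $t=0$, so your argument breaks precisely when $|u_0|=R_1$ is attained only on $\{0\}\times\Omega$ --- which can perfectly well happen, since $u_0(0)$ is an integral average of the trajectory and the solution may decay afterwards. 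Closing this hole requires a separate strictness source at $t=0$ (for instance, that $u_0(0)\in C_0(\Omega)$ lies strictly below $R_1\chi_\Omega$ near $\partial\Omega$, pushed through the $\alpha$-estimate by the strict positivity of the heat kernel), i.e.\ a case analysis on where a maximizer sits, which your proposal does not contain. The paper avoids the equality case altogether by a different device: it never inserts $\overline e_1$ into the estimates, but works with the number $f^R=\sup f/R_1$ over the compact box $[0,\tm]\times\cl\Omega\times[0,R_1]\times[0,R_2]$. Compactness and the strict hypothesis give $f^R<\overline e_1$, while Lemma~\ref{lem:estimates}(a) together with its crude variant $1\leq q_1Q_1\tm+f^RC_1$ gives $f^R\geq\overline e_1$ --- a clean contradiction between two numbers, with no degenerate-equality analysis needed. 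You should either adopt that route or supply the missing maximizer analysis.
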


\begin{proof} Suppose on the contrary that $u_0\neq 0$. Then $u_0=N_1(u_0,v_0)=M_1(u_0,v_0)$. Put $R_1=|u_0|$, $R_2=\max\{|v_0|,1\}$ and $r_1=\fl {u_0}>0$.

Assume that the inequality \eqref{eq:nonexistence u1}  holds.
Observe that $f(t,x,u,v)/R_1<\overline e_1$ for $t\in[0,\tm]$, $x\in\cl\Omega$, $0\leq u\leq R_1$ and $0\leq v\leq R_2$. Therefore, $f^R<\overline e_1$.
From Lemma \ref{lem:estimates}(a) we obtain
\begin{equation}\label{eq:estim 1}
1=\frac{|u_0|}{R_1}=\frac{|N_1(u_0,v_0)|}{R_1}\leq q_1Q_1C_1^1+ f^R (q_1Q_1C_2^1 + C_1').\end{equation}
Similarly, in the analogous manner as in the proof of Lemma \ref{lem:estimates}(a), one can show that
\begin{equation}\label{eq:estim 2}
1=\frac{|u_0|}{R_1}=\frac{|M_1(u_0,v_0)|}{R_1}\leq q_1Q_1 \mu_1([0,\tm])+ f^R C_1'.\end{equation}
The estimates \eqref{eq:estim 1} and \eqref{eq:estim 2} give $f^R\geq \overline e_1$, a contradiction.

Assume now that \eqref{eq:nonexistence u2}  holds.
Observe that $f(t,x,u,v)/r_1>m\underline e_1$  for $t\in[t_0,t_1]$, $x\in D$,  $mr_1\leq u\leq R_1$ and $0\leq v\leq R_2$. Therefore, $f_{r,R}^0>m\underline e_1$.

On the other hand, from Lemma \ref{lem:estimates}(b) we obtain
\begin{equation}\label{eq:estim 3} 1=\frac{\fl{u_0}}{r_1}=\frac{\fl{N_1(u_0,v_0)}}{r_1}\geq p_1P_1(c_1^1+f_{r,R}^0c_2^1).\end{equation}
From \eqref{eq:estim 3} we conclude that $f_{r,R}^0\leq m\underline e_1$, a contradiction.

The second assertion can be proved analogously.
\end{proof}

\begin{cor}\label{cor:non-existence}
\emph{(i)} If one of the inequalities \eqref{eq:nonexistence u1}-\eqref{eq:nonexistence v2}  holds, then there are no positive solutions of the system \eqref{eq:parabolic}.

\emph{(ii)} If one of the inequalities \eqref{eq:nonexistence u1}-\eqref{eq:nonexistence u2} holds and one of the inequalities \eqref{eq:nonexistence v1}-\eqref{eq:nonexistence v2} holds, then there are no
nontrivial nonnegative solutions of the system \eqref{eq:parabolic}.
\end{cor}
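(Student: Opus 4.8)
The plan is to derive both parts directly from Theorem~\ref{thm:non-existence}, which already isolates the two vanishing conclusions $u_0=0$ and $v_0=0$ from the two families of hypotheses. The only conceptual point to settle beforehand is the meaning of the two notions of solution appearing in the statement: a \emph{positive} solution is one whose two components are both nonzero (equivalently $u\neq 0$ and $v\neq 0$), whereas a \emph{nontrivial nonnegative} solution is merely a nonnegative solution for which the pair $(u,v)$ is not identically $(0,0)$.

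For part (i), I would argue by cases according to which inequality is assumed. If either \eqref{eq:nonexistence u1} or \eqref{eq:nonexistence u2} holds, Theorem~\ref{thm:non-existence} forces $u_0=0$; if either \eqref{eq:nonexistence v1} or \eqref{eq:nonexistence v2} holds, the same theorem forces $v_0=0$. In each of these four cases at least one component of any nonnegative solution must vanish, so no solution can have both components strictly positive. This rules out positive solutions.

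For part (ii), I would combine the two conclusions. Assuming simultaneously one of the $u$-inequalities \eqref{eq:nonexistence u1}--\eqref{eq:nonexistence u2} and one of the $v$-inequalities \eqref{eq:nonexistence v1}--\eqref{eq:nonexistence v2}, Theorem~\ref{thm:non-existence} yields $u_0=0$ and $v_0=0$ at once. Hence the only nonnegative solution compatible with these hypotheses is the trivial pair $(0,0)$, and there are no nontrivial nonnegative solutions.

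I do not anticipate a genuine obstacle here: the corollary is a bookkeeping consequence of Theorem~\ref{thm:non-existence}, and the heavy analytic work—the upper and lower estimates of Lemma~\ref{lem:estimates} together with the strict-positivity statement of Lemma~\ref{lem:fl>0}—has already been carried out in the proof of that theorem. The only care required is to pair each inequality with the correct vanishing conclusion and to be explicit about the distinction between a \emph{positive} solution and a \emph{nontrivial nonnegative} one, so that the case $u_0=0$ (or $v_0=0$) alone is enough for (i) while both are needed for (ii).
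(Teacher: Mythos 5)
Your proposal is correct and matches the paper's (implicit) reasoning exactly: the paper states Corollary~\ref{cor:non-existence} without proof, precisely because it follows from Theorem~\ref{thm:non-existence} by the bookkeeping you describe --- one vanishing component rules out positive solutions in (i), and both vanishing components rule out nontrivial nonnegative solutions in (ii). Your care in distinguishing ``positive'' from ``nontrivial nonnegative'' is exactly the point that makes the two parts differ.
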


\subsection{Two examples}\label{ex:[0,pi]}
In the following one-dimensional examples we show that all the constants $C_1^i$, $C_1'$, $C_2^i$, $c_1^i$, $c_2^i$, $m$, $i=1,2$, that occur in our theory can be computed.

\begin{exa}\label{exa:ex1}
Let $\Omega=[0,\pi]$ and let $\mu_1=\mu_2$ be the Lebesgue measure on $[0,\tm]$. Let us put $D=[b,\pi-b]$ for a fixed $0< b< \pi/2$. Then
\[\chi_D=\sum_{k=0}^\infty \frac{4}{(2k+1)\pi}\cos\big((2k+1)b\big)\cdot \sin\big((2k+1)x\big) \] and
\[\chi_\Omega=\sum_{k=0}^\infty \frac{4}{(2k+1)\pi}\sin\big((2k+1)x\big),\] where the convergence is considered in the space $L^2(\Omega)$.

The evolution of $\chi_D$ and $\chi_\Omega$ is given by the formulae
\[(S_2(t)\chi_D)(x)=\sum_{k=0}^\infty \frac{4}{(2k+1)\pi}\cos\big((2k+1)b\big)\cdot e^{-(2k+1)^2t}\cdot  \sin\big((2k+1)x\big),\]
\[(S_2(t)\chi_\Omega)(x)=\sum_{k=0}^\infty \frac{4}{(2k+1)\pi}\cdot e^{-(2k+1)^2t}\cdot  \sin\big((2k+1)x\big).\]

Put $b=\pi/4$, $t_0=0$ and $t_1=\tm=1$. Then we have
\[m=(S_2(1)\chi_D)\left(\frac\pi 4\right)=
\sum_{k=0}^\infty (-1)^k\frac2{(2k+1)\pi} e^{-(2k+1)^2}\approx 0.23.
\]

Since we consider the Lebesgue measure in the definition of the operators $\alpha$ and $\beta$, we shall omit the superscripts in the constants $C_j^i$, $c_j^i$.

Because
\[
\int_0^\tm S_2(\tau)\chi_Dd\tau(x)=\sum_{k=0}^\infty \frac{4}{(2k+1)^3\pi}\cos(2k+1)\frac\pi 4\cdot \left(1-e^{-(2k+1)^2}\right)\cdot  \sin\big((2k+1)x\big)
\]
and
\[
\int_{0}^\tm\int_{0}^{t}S_2(t-\tau) \chi_D d\tau dt=
\sum_{k=0}^\infty \frac{4}{(2k+1)^5\pi}\cos(2k+1)\frac\pi 4\cdot   e^{-(2k+1)^2}\cdot  \sin\big((2k+1)x\big),
\]
we obtain
\[
c_1=\sum_{k=0}^\infty \frac{2\cdot (-1)^k}{(2k+1)^3\pi}\left(1-e^{-(2k+1)^2}\right)\approx 0.38\]
and
\[
c_2=\sum_{k=0}^\infty \frac{2\cdot (-1)^k}{(2k+1)^5\pi}   e^{-(2k+1)^2} \approx 0.23
\]

Moreover, because

\[\int_0^\tm S_2(\tau)\chi_\Omega d\tau =\sum_{k=0}^\infty \frac{4}{(2k+1)^3\pi}\cdot \left(1-e^{-(2k+1)^2}\right)\cdot  \sin\big((2k+1)x\big)\] and

\[
\int_0^\tm\int_0^t S_2(\tau)\chi_\Omega d\tau dt=
\sum_{k=0}^\infty \frac{4}{(2k+1)^5\pi}\cdot   e^{-(2k+1)^2}\cdot  \sin\big((2k+1)x\big),
\]

\begin{equation}C_1=\left|\int_0^\tm S_2(\tau)\chi_\Omega d\tau\right|,\quad C_2=\left|\int_0^\tm\int_0^t S_2(\tau)\chi_\Omega d\tau dt\right|\end{equation}

we obtain
\[
C_1=C_1'= \sum_{k=0}^\infty \frac{4\cdot (-1)^k}{(2k+1)^3\pi}\cdot \left(1-e^{-(2k+1)^2}\right)\approx 0.77
\]

and

\[
C_2=
\sum_{k=0}^\infty \frac{4\cdot (-1)^k}{(2k+1)^5\pi}\cdot   e^{-(2k+1)^2}\approx 0.47.
\]

If we put $G_i(x)=g_i(x)=x$, then $p_i=P_i=q_i=Q_i=1$ and the conditions \eqref{eq:ind1-appl} and \eqref{eq:ind0-appl} are equivalent to the following inequalities:

\[
  f^R,g^R\leq \frac{1-C_1}{C_1' + C_2}\approx 0.19,\quad
	f_{r,R},g_{r,R} >\frac{1-c_1}{c_2}  \approx 2.64.
\]

The numerical calculations indicate that the choice $b=\pi/4$ is optimal, i.e. the ratio $(1-c_1)/(c_2m)$ is the smallest.
\end{exa}
\begin{exa}\label{exa:ex2}
Let $\Omega,D,\tm$  be as in Example \ref{exa:ex1}. We consider the problem \eqref{eq:parabolic-intro} with conditions of the form $u(0)=\kappa u(\tm)$, $v(0)=\kappa v(\tm)$, $\kappa>0$. We discuss in details the cases of periodic conditions and of solutions that double in time.
 Therefore $u(0)=\alpha(u,v)$ and $v(0)=\beta(u,v)$, where $\alpha$ and $\beta$ are as 
in \eqref{init-con} with $G_i=\mathrm{id}$, $g_i(u_1,u_2)=\kappa\cdot u_i$ and $\mu_1=\mu_2=\delta_{\tm}$ is a Dirac measure. Hence we have $p_i=q_i=\kappa$ and $P_i=Q_i=1$. Put $t_0=0$ and $t_1=\tm=1$.

The evolution of $\chi_D$ and $\chi_\Omega$ is the same as in Example \ref{exa:ex1}. Therefore, $m\approx 0.23$. The remaining constants can be calculated by using the formulae
\[
C_1=\left|S_2(\tm)\chi_\Omega\right|,\ C_2=C_1'=\left|\int_0^\tm S_2(\tau)\chi_\Omega d\tau\right|,\]
\[
c_1=\fl{S_2(\tm)\chi_D},\ c_2=\fl{\int_0^\tm S_2(\tau)\chi_D d\tau}
\]
and the calculations from Example \ref{exa:ex1}. We obtain

\[C_1 = \sum_{k=0}^\infty(-1)^k\frac{4}{(2k+1)\pi}e^{-(2k+1)^2}=2m\approx 0.46, \ C_2\approx 0.77,\ c_1=m\approx 0.23,\ c_2\approx 0.38.\]

Finally, the conditions \eqref{eq:ind1-appl} and \eqref{eq:ind0-appl} are equivalent to the following inequalities:

\[
  f^R,g^R\leq \mathbf{C}(\kappa):=\frac{1-\kappa C_1}{C_1' + \kappa C_2},\quad
	f_{r,R},g_{r,R} > \mathbf{c}(\kappa):=\frac{1-\kappa c_1}{\kappa  c_2}.
\]
For instance, in the periodic case ($\kappa=1$) we obtain $\mathbf{C}(1)\approx 0.35$ and  $\mathbf{c}(1)\approx 2.03$. Moreover, in the case of solutions that double in time ($\kappa=0.5$) we obtain $\mathbf{C}(0.5)\approx 0.67$ and $\mathbf{c}(0.5)\approx 4.66$.

\end{exa}

\section*{Acknowledgements}
The authors wish to thank the anonymous Referee for the constructive comments.
The authors are indebted to Prof. Precup for the useful discussions on the nonlocal parabolic case, in particular for providing them with the works \cite{b:tc-rp-pr,b:rp-par}.
G. Infante was partially supported by G.N.A.M.P.A. - INdAM (Italy).
This paper was written during the postdoctoral
stage of M. Maciejewski at the University of Calabria, supported by a
research fellowship within the project ``Enhancing Educational Potential of
Nicolaus Copernicus University in the Disciplines of Mathematical and
Natural Sciences'' (Project no. POKL.04.01.01-00-081/10) and
 by the NCN Grant 2013/09/B/ST1/01963.


\begin{thebibliography}{99}

\bibitem {b:ABHN} W. Arendt, C. Batty, M. Hieber and F. Neubrander, {\em Vector-valued Laplace transforms and Cauchy problems}. Monographs in Mathematics, 96. Birkh\"auser Verlag, Basel, 2001.

\bibitem{b:b-t-v}
I. Benedetti, V. Taddei and M. V{\"a}th, Evolution problems with nonlinear nonlocal boundary conditions,
\textit{J. Dynam. Differential Equations}
\textbf{25} (2013), 477--503.

\bibitem{Bys1} L. Byszewski, Strong maximum and minimum principles for parabolic problems with nonlocal inequalities, \textit{Z. Angew. Math. Mech.}, \textbf{70} (1990), 202--206.

\bibitem{Bys2} L. Byszewski, Strong maximum principles for parabolic nonlinear problems with nonlocal inequalities together with integrals, \textit{J. Appl. Math. Stochastic Anal.}, \textbf{3} (1990), 65--79.

\bibitem{Bys3} L. Byszewski, Strong maximum principles for parabolic nonlinear problems with nonlocal inequalities together with arbitrary functionals, \textit{J. Math. Anal. Appl.}, \textbf{156} (1991),  457--470.

\bibitem{b:Bou1} A. Boucherif,
Nonlocal problems for parabolic inclusions, \textit{Discrete Contin. Dyn. Syst.}, \textbf{suppl.} (2009), 82--91.

\bibitem{b:Bou2} A. Boucherif,
Discontinuous parabolic problems with a nonlocal initial condition, \textit{Bound. Value Probl.}, \textbf{2011}, Art. ID 965759, 10 pp.

\bibitem{b:Bou3} A. Boucherif,
Nonlocal conditions for lower semicontinuous parabolic inclusions, \textit{Adv. Difference Equ.}, \textbf{2011}, Art. ID 109570, 7 pp.

\bibitem{b:tc-rp-pr} T. Cardinali, R.  Precup and P. Rubbioni,  A unified existence theory for evolution equations and systems under nonlocal conditions,  \textit{J. Math. Anal. Appl.}, \textbf{432} (2015), 1039--1057.

\bibitem{b:Chab}
J. Chabrowski, On nonlocal problems for parabolic equations, \textit{Nagoya Math. J.}, \textbf{93} (1984), 109--131.

\bibitem {b:DL} R. Dautray and J. L. Lions, {\em Mathematical analysis and numerical methods
for science and technology}, Vol. 1--3. Springer-Verlag, Berlin, 1990.

\bibitem{b:Deim} K. Deimling, \newblock {\em Nonlinear functional analysis},
Springer-Verlag, Berlin, 1985.

\bibitem{b:Deng}
K. Deng, Exponential decay of solutions of semilinear parabolic equations with nonlocal initial conditions, \textit{J. Math. Anal. Appl.}, \textbf{179} (1993) 630--637.

\bibitem {b:EnNag}  K. J. Engel and R. Nagel, {\em One-parameter semigroups for linear evolution equations}. Graduate Texts in Mathematics, 194. Springer-Verlag, New York, 2000.

\bibitem {b:Evans} L. C. Evans, {\em Partial differential equations}. Graduate Studies in Mathematics, 19. American Mathematical Society, Providence, RI, 1998.

\bibitem{b:gd} A. Granas and J. Dugundji, \newblock {\em Fixed point theory},
Springer-Verlag, New York, 2003.

\bibitem{b:guolak} D. Guo and V. Lakshmikantham,
\textit{Nonlinear problems in abstract cones}, Academic Press, Boston, 1988.

\bibitem{b:gi-caa}  G. Infante,
Nonlocal boundary value problems with two nonlinear boundary conditions,
\textit{Commun. Appl. Anal.}, \textbf{12} (2008), 279-288.

\bibitem{b:gi-mm-rp} G. Infante, M. Maciejewski and R. Precup, A topological approach to the existence and multiplicity of positive solutions of $(p,q)$-Laplacian systems, \textit{Dyn. Partial Differ. Equ.}, \textbf{12} (2015), 193-215.

\bibitem{b:gipp-nonlin} G. Infante and P. Pietramala,
 Existence and multiplicity of non-negative solutions for systems of perturbed Hammerstein
integral equations, \textit{Nonlinear Anal.}, \textbf{71} (2009), 1301--1310.

\bibitem{b:legg-will} R. W. Leggett and L. R. Williams, Multiple positive fixed points of nonlinear operators on ordered Banach spaces, \textit{Indiana Univ. Math. J.}, \textbf{28} (1979), 673--688.

\bibitem{b:McKib} M. A. McKibben, \textit{Discovering evolution equations with applications}, vol. I. Chapman \& Hall Mathematics, Boca Raton, 2011.

\bibitem{b:olm-rob}
W. E. Olmstead and C. A. Roberts, The one-dimensional heat equation with a nonlocal initial condition, \textit{Appl. Math. Lett.}, \textbf{10} (1997),  89--94.

\bibitem{b:pao} C. V. Pao, Reaction diffusion equations with nonlocal boundary and nonlocal initial conditions, \textit{J. Math. Anal. Appl.}, \textbf{195} (1995),  702--718.

\bibitem{b:rp-par} R. Precup, Parabolic Harnack type inequalities and multiple positive solutions of evolution equations (preprint).

\bibitem{b:ras-kar1}
J. M. Rassias and E. T. Karimov, Boundary-value problems with non-local initial condition for parabolic equations with parameter, \textit{European J. Pure Appl. Math.}, \textbf{3} (2010) 948--957.

\bibitem{b:ras-kar2}
J. M. Rassias and E. T. Karimov, Boundary-value problems with non-local condition for degenerate parabolic equations, \textit{Contemporary Analysis and Applied Mathematics}, Vol.1, No.1, 42-48, 2013.

\bibitem{b:Stik} A. \v{S}tikonas, A survey on stationary problems, Green's functions and spectrum of Sturm-Liouville problem
with nonlocal boundary conditions,
\textit{Nonlinear Anal. Model. Control}, \textbf{19} (2014), 301--334.

\bibitem{b:Trud-ell} N. S. Trudinger, On Harnack type inequalities and their applications to quasilinear elliptic equations, \textit{Comm. Pure Appl. Math.}, \textbf{20} (1967), 721--747.

\bibitem {b:Trud} N. S. Trudinger, Pointwise estimates and quasilinear parabolic equations, \textit{Comm. Pure Appl. Math.} {\bf 21} (1968), 205-226.

\bibitem {b:Vrabie} I. I. Vrabie, {\em $C_0$-semigroups and applications}. North-Holland Mathematics Studies, 191. North-Holland Publishing Co., Amsterdam, 2003.

\end{thebibliography}
\end{document}